\numberwithin{equation}{section}
\newtheorem{theorem}{Theorem}[section]
\newtheorem{proposition}[theorem]{Proposition}
\newtheorem{lemma}[theorem]{Lemma}
\newtheorem*{lemmanonnum}{Lemma}
\theoremstyle{remark}
\theoremstyle{definition}
\renewcommand\d{\partial}
\def\eps{\varepsilon }
\newcommand{\dd}{\textrm{d}}
\newcommand{\E}{\mathbb{E}}
\newcommand{\N}{\mathbb{N}}
\newcommand{\R}{\mathbb{R}}
\newcommand{\T}{\mathbb{T}}
\newcommand{\Z}{\mathbb{Z}}
\newcommand{\PP}{\mathbb{P}}
\newcommand\cF{{\mathcal F}}
\newcommand{\fe}{f^{\eps}}
\newcommand{\fd}{f^{\delta}}
\newcommand{\frd}{f^{R,\delta}}
\newcommand{\re}{r^{\eps}}
\newcommand{\ve}{g^{\eps}}
\newcommand{\feth}{f^{\eps}_3}
\newcommand{\li}[1]{\overline{#1}}
\newcommand{\ti}[1]{\widehat{#1}}
\begin{document}
\begin{center}
\textbf{DIFFUSION LIMIT FOR THE RADIATIVE TRANSFER EQUATION PERTURBED BY A WIENER PROCESS}
\end{center}

\vspace{0.2cm}
\begin{center}
\small \sc{A. Debussche\footnote[1]{\label{ref}IRMAR, ENS Rennes, CNRS, UEB. av Robert Schuman, F-35170 Bruz, France. Email: arnaud.debussche@ens-rennes.fr; sylvain.demoor@ens-rennes.fr}, S. De Moor\textsuperscript{\ref{ref}} and J. Vovelle\footnote[2]{Universit\'e de Lyon ; CNRS ; Universit\'e Lyon 1, Institut Camille Jordan, 43 boulevard du 11 novembre 1918, F-69622 Villeurbane Cedex, France. Email: vovelle@math.univ-lyon1.fr}}
\end{center}
\vspace{0.2cm}

\begin{abstract}
\footnotesize The aim of this paper is the rigorous derivation of a stochastic non-linear diffusion equation from a radiative transfer equation perturbed with a random noise. The proof of the convergence relies on a formal Hilbert expansion and the estimation of the remainder. The Hilbert expansion has to be done up to order 3 to overcome some difficulties caused by the random noise.\\

\noindent \textbf{Keywords}: Kinetic equations, diffusion limit, stochastic partial differential equations, Hilbert expansion, radiative transfert. 
\end{abstract}

\normalsize

\section{Introduction}
\noindent In this paper, we are interested in the following non-linear equation
\begin{equation}\label{rt}
\begin{aligned}
& \dd \fe\ +\ \frac{1}{\eps}a(v)\cdot\nabla_x \fe\ \dd t\ =\ \frac{1}{\eps^2}\sigma(\li{\fe})L(\fe)\dd t\ +\ \fe\circ Q\dd W_t, \\ 
& \fe(0)=\rho_{\text{in}}, \qquad t\in[0,T],\, x\in \T^N,\,v\in V.
\end{aligned} 
\end{equation}
where $V$ is an $N$-dimensional torus, $a:V\to V$ and $\sigma:\R\to\R$. The notation $\li{f}$ stands for the average over the velocity space $V$ of the function $f$, that is 
$$\li{f}=\int_V f\,\dd v.$$
The operator $L$ is a linear operator of relaxation which acts on the velocity variable $v\in V$ only. It is given by
\begin{equation}\label{defL}
L(f):=\li{f}-f.
\end{equation}
The random noise term $W$ is a cylindrical Wiener process on the Hilbert space $L^2(\T^N)$. The covariance operator $Q$ is a linear self-adjoint operator on $L^2(\T^N)$. The precise description of the problem setting will be given in the next section. In this paper, we investigate the behaviour in the limit $\eps\to 0$ of the solution $\fe$ of $(\ref{rt})$. \medskip

\noindent Concerning the physical background in the deterministic case ($Q \equiv 0$), the equation \eqref{rt} describes the interaction between a surrounding continuous medium and a flux of photons radiating through it in the absence of hydrodynamical motion. The unknown $f^{\eps}(t,x,v)$ then stands for a distribution function of photons having position $x$ and velocity $v$ at time $t$. The function $\sigma$ is the opacity of the matter. When the surrounding medium becomes very large compared to the mean free paths $\eps$ of photons, the solution $\fe$ to \eqref{rt} is known to behave like $\rho $ where $\rho$ is the solution of the Rosseland equation 
$$
\partial_t \rho - \mathrm{div}_x(\sigma(\rho)^{-1}K\nabla_x\rho)\ =\ 0, \qquad (t,x)\in  [0,T] \times \T^N.
$$
with $K:=\int_Va(v)\otimes a(v)\,\dd v$. This is what is called the Rosseland approximation. In this paper, we investigate such an approximation where we have perturbed the deterministic equation by a smooth multiplicative random noise of the form $\fe\circ Q\dd W$. Note in particular that the noise is independant of the scaling $\eps$ of the equation. In the deterministic case, the Rosseland approximation has been widely studied. In the paper of Bardos, Golse and Perthame \cite{rosseland}, they derive the Rosseland approximation on a slightly more general equation of radiative transfer type than \eqref{rt} where the solution also depends on the frequency variable $\nu$. Using the so-called Hilbert's expansion method, they prove a strong convergence of the solution to the radiative transfer equation to the solution to the Rosseland equation. In \cite{bardos88}, the stationary and evolution Rosseland approximation are proved in a weaker sense with weakened hypothesis on the various parameters of the radiative transfer equation, in particular on the opacity function $\sigma$. \medskip

\noindent In the stochastic setting, the paper of Debussche and Vovelle \cite{arnaudjulien} deals with the problem of the radiative transfer equation where the opacity function is constant ($\sigma\equiv 1$) and with a multiplicative noise of the form $\frac{1}{\eps}\fe m^{\eps}$ where $m^{\eps}(t,x)=m(t/\eps^2,x)$ with $m$ a stationary Markov process. Note that in this setting, the noise also depends on the scaling $\eps$ of the equation and that formally $\frac{1}{\eps}m^{\eps}\dd t$ converges in law to some Wiener process $Q\dd W_t$ where $Q$ is a covariance operator which can be expressed in terms of the driving process $m$. In the paper \cite{arnaudjulien}, the authors prove the convergence in law of the solution to \eqref{rt} to a limit stochastic fluid equation by mean of a generalization of the perturbed test-functions method. \medskip

\noindent In this present work, we consider a non-linear operator $\sigma(\li{f})Lf$, which can be seen as a simple non-linear perturbation of the classical linear relaxation operator $L$ considered in \cite{arnaudjulien}. Nevertheless, we consider that the noise is already in its limit form $Q\dd W$. In particular, we point out that the fact that the noise is already in an It\^o form permits the application of the It\^o formula. As a consequence, we are able to prove in this paper a stronger result of convergence of $\fe$ to $\rho$, namely a strong convergence in the space $X:=L^{\infty}(0,T;L^1(\Omega;L^1_{x,v}))$ with rate $\eps$. The proof relies on the so-called Hilbert expansion method: we expand the solution $\fe$ to \eqref{rt} as $\fe = \rho + \eps f_1 + \eps^2 f_2 + \eps^3 f_3 + \re$ where $\rho$ is the solution to the limit problem, $f_1$, $f_2$, $f_3$ are three correctors to be defined appropriately and where $\re$ denotes the remainder of the expansion. First, we prove that the correctors $(f_i)_{1\leq i \leq 3}$ behave correctly in the space $X$ so that $\eps f_1 + \eps^2f_2 + \eps ^3 f_3 = O(\eps)$ in $X$. This step requires some regularity on the limit solution $\rho$ and we make use of the regularity result in \cite{regularity}. Then, to conclude the proof, we estimate the remainder by mean of an It\^o formula to show that $\re$ is of order $\eps$ in $X$. Note that an Hilbert expansion up to order $2$ is usually sufficient in many well-known deterministic cases; here we need to push the expansion up to order $3$ to overcome some difficulties caused by the noise term. \medskip

\noindent We point out that, in the sequel, when proving existence and uniqueness for the problem \eqref{rt}, we use a stochastic averaging lemma which can be interesting by itself. It provides a better regularity for the average over the velocity space of solutions to kinetic stochastic equations, see Lemma \ref{lemmemoyenne}. The proof of this lemma is detailed in Appendix B; it is mainly based on an adaptation to a stochastic setting of the paper of Bouchut and Desvillettes \cite{bouchut}. \medskip

\noindent The paper is organized as follows. In Section \ref{sec1}, we introduce the setting and the notations and give the main result to be proved, Theorem \ref{mainresult}. In Section \ref{sec2}, we derive formally the limit equation. Finally, in Section \ref{sec3}, we provide the proof of the main result, which is divided in three main steps. First, we study the existence, uniqueness and regularity of the solutions to the radiative transfer equation \eqref{rt} and to the stochastic Rosseland problem. Then we define and study the correctors of the Hilbert expansion. Finally, we estimate the remainder to conclude the proof. \medskip

\noindent {\em Aknowledgements:} This work is partially supported by the french government thanks to the ANR program Stosymap. It also benefit from the support of the french government ``Investissements d'Avenir'' program ANR-11-LABX-0020-01.

\section{Preliminaries and main result}\label{sec1}
\subsection{Notations and hypothesis}\label{sec:nothyp}

\noindent Let us now introduce the precise setting of equation \eqref{rt}. We work on a finite-time interval $[0,T],\,T>0,$ and consider periodic boundary conditions for the space variable: $x\in\T^N$ where $\T^N$ is the $N$-dimensional torus. Regarding the velocity space $V$, we also consider periodic boundary conditions, that is $V=\T^N$, but we keep the notation $V$ to distinguish the velocity space from the space one. \medskip

\noindent For $p\in[1,\infty]$, the Lebesgue spaces $L^p(\T^N\times V)$ will be denoted by $L^p_{x,v}$ for short. The associated norm will be written $\|\cdot\|_{L^p_{x,v}}$.  Similarly, we define the Lebesgue spaces $L^p_x$, $L^p_v$ and, if $k\in\Z$, the Sobolev spaces $W^{k,p}_{x,v}$ and $W^{k,p}_x$ or $H^k_{x,v}$ and $H^k_x$ when $p=2$. The scalar product of $L^2_{x,v}$ will be denoted by $(\cdot,\cdot)$. We finally introduce, for $k\in\N$, the space $C^{0,k}([0,T]\times\T^N)$ constituted by the functions of the variables $(t,x)\in[0,T]\times\T^N$ which are continuous in time and $k$-times continuously differentiable in space. \medskip

\noindent Concerning the velocity mapping $a:V\to V$, we shall assume that it is $C^1_b$. Furthermore, we suppose that the following null flux hypothesis holds
\begin{equation}\label{nullflux}
\int_Va(v)\,\dd v = 0.
\end{equation}
We also define the following matrix 
\begin{equation}\label{defK}
K:=a(v)\otimes a(v)
\end{equation}
and assume that $\li{K}$ is definite positive. Furthermore, we use a stochastic version of averaging lemmas to prove the existence of the solution $\fe$ to \eqref{rt}. To do so, we need to assume the following standard condition:
\begin{equation}\label{nondegenlemmemoy}
\forall \eps >0,\, \forall (\xi,\sigma)\in  S^{N-1} \!\!\times \R,\;\; \text{Leb}\left(\{v\in V, |a(v)\cdot\xi + \sigma|<\eps\}\right)\leq \eps^{\alpha},
\end{equation}
for some $\alpha \in (0,1]$ and where Leb denotes the normalized Lebesgue measure on $V=\T^N$. \medskip

\noindent Regarding the opacity function $\sigma:\R\to\R$, we assume that
\begin{enumerate}
\item[(H1)] There exist two positive constants $\sigma_*$, $\sigma^* >0$ such that for almost all $x\in\R$, we have 
$$\sigma_*\leq \sigma(x)\leq \sigma^*;$$

\item[(H2)] the function $\sigma$ is $\mathcal{C}^3_b$, in particular $\sigma$ is Lipschitz continuous; 

\item[(H3)] the mappings $x\mapsto\sigma(x)$ and $x\mapsto\sigma(x)x$ are respectively non-increasing and non-decreasing.
\end{enumerate}

\noindent Finally, the initial condition $\rho_{\text{in}}$ is supposed to be a smooth non-negative function which does not depend on the variable $v\in V$.

\subsection{The random noise}\label{sec:noise}
\noindent Regarding the stochastic term, let $(\Omega,\cF,(\cF_t)_{t\geq0},\PP)$ be a stochastic basis with a complete, right-continuous filtration. The random noise $\dd W_t$ is a cylindrical Wiener process on the Hilbert space $L^2(\T^N)$. We can define it by setting 
\begin{equation}
\dd W_t\ =\ \sum_{k\geq 0} e_k\ \dd\beta_k(t),
\end{equation}
where the $(\beta_k)_{k\geq 0}$ are independent Brownian motions on the real line and $(e_k)_{k\geq 0}$ a complete orthonormal system in the Hilbert space $L^2(\T^N)$. The covariance operator $Q$ is a linear self-adjoint operator on $L^2(\T^N)$. We assume the following regularity property 
\begin{equation}\label{regularnoise}
\sum_{k\geq 0}\|Qe_k\|^2_{W^{4,\infty}_x}<\infty.
\end{equation}
In particular, we define
\begin{equation}\label{regularnoise01}
\kappa_{0,\infty}:=\sum_{k\geq 0}\|Qe_k\|^2_{L^{\infty}_x}<\infty,\qquad \kappa_{1,\infty}:=\sum_{k\geq 0, \,1\leq i\leq N}\|\partial_{x_i}Qe_k\|^2_{L^{\infty}_x}<\infty.
\end{equation}
As a consequence, we can introduce
$$G:=\frac{1}{2}\sum_{k\geq 0}(Qe_k)^2,$$
which will be useful when switching Stratonovich integrals into It\^o form. Precisely, we point out that for Equation \eqref{rt} we can write $\fe\circ Q\dd W_t\ =\ \fe Q\dd W_t + G\fe\dd t$ where 
$$
Q\dd W_t\ =\ \sum_{k\geq 0} Qe_k\ \dd\beta_k(t).
$$
In the sequel, we will have to consider stochastic integrals of the form $h Q\dd W_t$ where $h\in L^p_{x,v}$, $p\geq 2$, and we should ensure the existence of the stochastic integrals as $L^p_{x,v}$-valued processes. We recall that the Lebesgue spaces $L^p_{x,v}$ with $p\geq 2$ belong to a class of the so-called $2$-smooth Banach spaces, which are well suited for stochastic It\^o integration (see \cite{gamma1}, \cite{gamma2} for a precise construction). So, let us denote by $\gamma(L^2(\T^N),X)$ the space of the $\gamma$-radonifying operators from $L^2(\T^N)$ to a $2$-smooth Banach space $X$. We recall that $\Psi\in\gamma(L^2(\T^N),X)$ if the series 
$$\sum_{k\geq 0}\gamma_k\Psi(e_k)$$
converges in $L^2(\widetilde{\Omega},X)$, for any sequence $(\gamma_k)_{k\geq 0}$ of independent normal real valued random variables on a probability space $(\widetilde{\Omega},\widetilde{\mathcal{F}},\widetilde{\PP})$. Then, the space $\gamma(L^2(\T^N),X)$ is endowed with the norm 
$$\|\Psi\|_{\gamma(K,X)}:=\Bigg{(}\widetilde{\E}\Bigg{|}\sum_{k\geq 0}\gamma_k\Psi(e_k)\Bigg{|}_X^2\Bigg{)}^{1/2}$$ 
(which does not depend on $(\gamma_k)_{k\geq 0}$) and is a Banach space. 
Now, if $h\in L^p_{x,v}$, $p\geq 2$, $h Q\dd W$ can be interpreted as $\Psi\dd W$ where $\Psi$ is the following $\gamma$-radonifying operator from $L^2(\T^N)$ to $L^p_{x,v}$:
$$\Psi(e_k) := h Q e_k.$$
Let us compute the $\gamma$-radonifying norm of $\Psi$. We fix $(\gamma_j)_{j\in\N}$ a sequence of independent $\mathcal{N}(0,1)$-random variables.
$$
\begin{aligned}
\|\Psi\|^2_{\gamma(L^2(\T^N),L^p_{x,v})}&\ =\ \widetilde{\E} \Big{\|}\sum_k\gamma_kh(e_k)\Big{\|}^2_{L^p_{x,v}}=\ \widetilde{\E} \Big{\|}\sum_k\gamma_kh Qe_k\Big{\|}^2_{L^p_{x,v}}\\
&\leq\ \Bigg{(}\widetilde{\E}\Big{\|}\sum_k\gamma_kh Qe_k\Big{\|}^p_{L^p_{x,v}}\Bigg{)}^{2/p}=\ \Bigg{(}\widetilde{\E}\int_{\T^N\times V}\Big{|}\sum_k\gamma_kh Qe_k\Big{|}^p\Bigg{)}^{2/p}.
\end{aligned}
$$
Observe that, almost everywhere in $\T^N\times V$, $\sum_k\gamma_kh Qe_k$ is a real centered Gaussian with covariance $\sum_k|h\, Qe_k|^2$. As a consequence, there exists a constant $C_p\in(0,\infty)$ such that
$$ \widetilde{\E}\Big{|}\sum_k\gamma_kh Qe_k\Big{|}^p=C_p\Big{(}\sum_k|h Qe_k|^2\Big{)}^{p/2}.$$
We use this equality in the computations of the $\gamma$-radonifying norm to obtain, thanks to $(\ref{regularnoise01})$,
\begin{equation}\label{gammanorm}
\begin{aligned}
\|\Psi\|^2_{\gamma(L^2(\T^N),L^p_{x,v})}&\leq\ C_p^{2/p}\Bigg{(}\int_{\T^N\times V}\Big{(}\sum_k(Qe_k)^2\Big{)}^{p/2}|h|^p\Bigg{)}^{2/p}\\
&\leq\ C_p^{2/p}\kappa_{0,\infty}\|h\|^2_{L^p_{x,v}}.
\end{aligned}
\end{equation}

\subsection{Properties of the operator $\sigma(\li{\cdot}) L(\cdot)$.}\label{sec:sigmaL}

\noindent Similarly as in the deterministic case, we expect with $(\ref{rt})$ that $\sigma(\li{\fe}) L(\fe)$ tends to zero with $\eps$, so that we should determine the equilibrium of the operator $\sigma(\li{\cdot}) L(\cdot)$. In this case, since $\sigma>0$, they are clearly constituted by the functions independent of $v\in V$. \\
In the space $L^2_{x,v}$, the operator $\sigma(\li{\cdot}) L(\cdot)$ is dissipative. Namely, we have, for $f\in L^2_{x,v}$,
\begin{equation}\label{sigmaLL2}
(\sigma(\li{f})Lf,f)=-\|\sigma(\li{f})^{1/2}Lf\|^2_{L^2_{x,v}}\leq 0.
\end{equation}
In the space $L^1_v$ we have some accretivity properties for the operator $\sigma(\li{\cdot}) L(\cdot)$. Namely, (see \cite{rosseland}), if $f$, $g\in L^1_v$ with $f\geq 0$, we have 
\begin{equation}\label{accrdet}
\int_{V}\text{sgn}^+(f-g)\left[\sigma(\li{f})L(f)-\sigma(\li{g})L(g)\right]\dd v \leq 0,
\end{equation}
where $\text{sgn}^+(x):=\mathbf{1}_{x\geq 0}$. In the deterministic setting, the quantity above is involved when deriving the equation satisfied by $(f-g)^+$ where $f$ and $g$ are solutions to the equation $(\ref{rt})$ without noise and where $x^+:=\max(0,x)$ stands for the positive part of $x$. This is the main argument that permits to prove uniqueness for equation $(\ref{rt})$ without noise. In our stochastic setting, this procedure will be replaced by the application of It\^o formula with the function $x\mapsto x^+$ to the process $f-g$. To make this plainly rigorous, we have to approximate the map $x\mapsto x^+$ by regular (at least $C^2$) functions. Therefore, we have to investigate what we have lost in the bound $(\ref{accrdet})$ above when replacing $\text{sgn}^+$ by some smooth approximation. To this end, take $\psi$ a smooth (at least $C^2$) non-decreasing function such that
$$
\left\{
\begin{array}{l}
\psi(x) = 0, \quad x\in (-\infty,0], \\
\psi(x) = 1, \quad x\in [1,+\infty), \\
0<\psi(x)<1, \quad x\in (0,1).
\end{array}
\right.
$$
and define 
\begin{equation}\label{defvarphi}
\varphi_{\delta}(x):=\int_0^x\psi\left(\frac{y}{\delta}\right)\,\dd y,\quad x\in\R.
\end{equation}
Then, we have the following lemma.
\begin{lemma}\label{accr}
Let $\delta>0$. Suppose that $f,\,g\in L^1_v$ with $f\geq 0$. We have the two following estimates
\begin{equation}\label{accr1}
\int_{V}\varphi_{\delta}'(f-g)\left[\sigma(\li{f})L(f)-\sigma(\li{g})L(g)\right]\dd v \leq C\left(1+\|f\|_{L^1_v} \right)\delta,
\end{equation}
\begin{equation}\label{accr2}
\int_{V}\varphi_{\delta}'(g-f)\left[\sigma(\li{g})L(g)-\sigma(\li{f})L(f)\right]\dd v \leq C\left(1+\|f\|_{L^1_v} \right)\delta.
\end{equation}
\end{lemma}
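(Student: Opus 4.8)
The plan is to reduce both inequalities to the deterministic accretivity bound \eqref{accrdet} through a layer-cake representation of $\varphi_\delta'$, the parameter $\delta$ entering only through the width of the transition region of $\varphi_\delta'=\psi(\cdot/\delta)$. I will treat \eqref{accr1}; \eqref{accr2} is entirely analogous. Write $a:=\li f$, $b:=\li g$, and recall that these are constant in $v$ and that $\dd v$ has total mass $1$ on $V$.

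To see the mechanism on the simplest piece, note that by linearity of $L$,
\[
\sigma(\li f)L(f)-\sigma(\li g)L(g)=\sigma(\li f)\,L(f-g)+\big(\sigma(\li f)-\sigma(\li g)\big)L(g),
\]
and that the contribution of the first term, $\sigma(\li f)\int_V\varphi_\delta'(f-g)\,L(f-g)\,\dd v$, is already $\le 0$: since $\varphi_\delta'$ is nondecreasing and $L(f-g)=\li{f-g}-(f-g)$, the Chebyshev (monotone correlation) inequality for the probability measure $\dd v$ gives $\int_V\varphi_\delta'(f-g)\,(f-g)\,\dd v\ge\big(\int_V\varphi_\delta'(f-g)\,\dd v\big)\li{f-g}$, whence the claim using $\sigma\ge\sigma_*>0$ from (H1). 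This step uses only the relaxation structure of $L$ and loses no power of $\delta$.

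For the genuine estimate I would apply the layer-cake formula $\varphi_\delta'(f-g)=\int_0^1\mathbf 1_{\{f-g>c(s)\}}\,\dd s$, where $c(s)\in[0,\delta]$ runs through the level sets of $\psi(\cdot/\delta)$, so that the left-hand side of \eqref{accr1} equals $\int_0^1\big(\int_V\mathbf 1_{\{f>g+c(s)\}}[\sigma(\li f)L(f)-\sigma(\li g)L(g)]\,\dd v\big)\,\dd s$. For each fixed $s$ the pair $(f,\,g+c(s))$ is admissible in \eqref{accrdet}: indeed $f\ge0$, $L(g+c(s))=L(g)$ and $\li{g+c(s)}=b+c(s)$. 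Thus \eqref{accrdet} provides a nonpositive principal term, while the discrepancy between $\sigma(b+c(s))$ and $\sigma(b)=\sigma(\li g)$ appears as an error of the form $[\sigma(b+c(s))-\sigma(b)]\int_{\{f>g+c(s)\}}L(g)\,\dd v$, whose prefactor is controlled by the Lipschitz bound (H2): $|\sigma(b+c(s))-\sigma(b)|\le \mathrm{Lip}(\sigma)\,c(s)\le\mathrm{Lip}(\sigma)\,\delta$.

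The main obstacle, which I expect to dominate the work, is that this error cannot be bounded slice-by-slice in isolation: discarding the principal term and estimating the error alone produces $\mathrm{Lip}(\sigma)\,\delta\int_V(f-\li g)^+\,\dd v$, hence a spurious dependence on $\li g$ (equivalently on $\|g\|_{L^1_v}$) rather than on $\|f\|_{L^1_v}$. The correct dependence must come from keeping the nonpositive principal term of \eqref{accrdet} together with the error and exploiting both monotonicity properties in (H3): $\sigma$ nonincreasing fixes the sign $\sigma(b+c(s))-\sigma(b)\le0$, so that the unfavourable part of the error occurs precisely where it is absorbed by the (quantitatively negative) principal term underlying \eqref{accrdet}, while $f\ge0$ — giving $g<f$ on $\{f>g+c(s)\}$ — is what converts the surviving $g$-integrals into $f$-integrals and yields the factor $1+\|f\|_{L^1_v}$. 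This is the smoothed counterpart of the computation establishing \eqref{accrdet} in \cite{rosseland}, the factor $\delta$ now entering solely through the width $c(s)\le\delta$ of the transition region of $\varphi_\delta'$. Finally, \eqref{accr2} follows from the same scheme upon replacing $\varphi_\delta'(f-g)$ by the nondecreasing function $-\varphi_\delta'(g-f)$ and shifting $g$ by $-c(s)$ instead of $+c(s)$; the roles of the two monotonicity conditions in (H3) are interchanged, but the structure and the final bound are identical.
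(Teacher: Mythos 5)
Your strategy --- reducing the smoothed inequalities to the deterministic accretivity \eqref{accrdet} by a layer-cake decomposition of $\varphi_\delta'$ --- is genuinely different from the paper's proof (which never invokes \eqref{accrdet}: it adds a null term, regroups into two integrals governed by the monotonicity of $x\mapsto\sigma(x)x$ and of $\sigma$ together with $f\ge 0$, and concludes by a case analysis on the positions of $f-g$ and $\li{f}-\li{g}$ relative to $[0,\delta]$). But as written your proposal has a genuine gap exactly at the step you yourself call ``the main obstacle'': the slice-wise error $[\sigma(\li{g}+c(s))-\sigma(\li{g})]\int_{\{f>g+c(s)\}}L(g)\,\dd v$ is never estimated. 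The mechanism you announce in its place --- absorption by ``the (quantitatively negative) principal term underlying \eqref{accrdet}'' --- cannot be executed within your framework: \eqref{accrdet} is available only as a qualitative inequality, so as a black box it provides nothing quantitative to absorb against, and extracting quantitative negativity would mean reopening the computation of \cite{rosseland}, i.e.\ abandoning the black-box strategy and redoing essentially the paper's Appendix A. Moreover, for \eqref{accr1} your diagnosis is incorrect: the error \emph{can} be bounded slice by slice, but by a dichotomy rather than the single estimate you discard. With $A=\{f>g+c\}$, $c\in[0,\delta]$, the error is at most $\mathrm{Lip}(\sigma)\,\delta\,\bigl(\int_A(g-\li{g})\,\dd v\bigr)^+$; if $\li{g}+c\ge 0$, then $g<f$ on $A$ and $f\ge0$ give $\int_A(g-\li{g})\,\dd v\le\|f\|_{L^1_v}-(\li{g}+c)|A|\le\|f\|_{L^1_v}$, while if $\li{g}+c<0$, then $\int_V(g-\li{g})\,\dd v=0$ and $g\ge f-c\ge-c$ on $A^c$ give $\int_A(g-\li{g})\,\dd v=\int_{A^c}(\li{g}-g)\,\dd v\le(\li{g}+c)|A^c|\le 0$. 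This (plus replacing the strict indicator by $\mathrm{sgn}^+$ through a shift $c\to c+\eta$, $\eta\downarrow0$) closes \eqref{accr1}; none of it is in your text.

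For \eqref{accr2} the plan as stated would actually fail. First, after your shift $g\mapsto g-c(s)$ the principal term is \eqref{accrdet} with the two functions exchanged, and the hypothesis $f\ge 0$ sits in the wrong slot; this is repairable, but only via the identity $\int_V[\sigma(\li{F})L(F)-\sigma(\li{G})L(G)]\,\dd v=0$, which lets you pass to the complementary indicator --- an observation absent from your argument. Second, and fatally for a slice-wise bound, with your shift the error involves $L(g)$ on $\{g>f+c\}$ and is \emph{not} controlled by $\|f\|_{L^1_v}$: take $f\equiv 0$, $g=M(\mathbf{1}_{B_0}-\mathbf{1}_{B_1})$ with $|B_0|=|B_1|=1/2$ (so $\li{g}=0$), and $\sigma$ satisfying (H1)--(H3) with its decrease concentrated on a neighbourhood of $0^-$; the slice-wise error is then of order $\mathrm{Lip}(\sigma)\,\delta\,M$, unbounded in $M$, and is cancelled only by the principal term --- so here absorption really would be required, and it is unavailable. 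The correct repair is to shift $f$ instead: with $f_c:=f+c\ge 0$ the principal term becomes \eqref{accrdet} for the pair $(f_c,g)$ after the flip above, and the error becomes $[\sigma(\li{f}+c)-\sigma(\li{f})]\int_{\{g>f+c\}}L(f)\,\dd v\le 2\,\mathrm{Lip}(\sigma)\,\delta\,\|f\|_{L^1_v}$, since $\int_V|L(f)|\,\dd v\le 2\|f\|_{L^1_v}$ for $f\ge 0$. In short, your layer-cake reduction is salvageable and even yields clean constants, but the two estimates that constitute the actual content of the lemma are missing from the proposal, and the substitute mechanism you describe would not supply them. (Your Chebyshev warm-up is correct but plays no role: the complementary term $(\sigma(\li{f})-\sigma(\li{g}))\int_V\varphi_\delta'(f-g)L(g)\,\dd v$ it leaves behind carries no smallness in $\delta$.)
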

\begin{proof}
The proof is given in Appendix A.
\end{proof}

\subsection{Main result}

\noindent We may now state our main result, the proof of which will be given throughout this paper.
\begin{theorem}\label{mainresult}
Let $\fe$ denote the solution of the kinetic problem $(\ref{rt})$ in the sense of Proposition \ref{solkinetic} and $\rho$ the solution of the non-linear stochastic partial differential equation
\begin{equation}\label{EqF}
\left\{
\begin{aligned}
& \dd \rho - \mathrm{div}_x\left(\sigma(\rho)^{-1}\li{K}\,\nabla_x \rho \right) \dd t = \rho\circ Q\dd W_t,\\
& \rho(0)=\rho_{\text{in}},
\end{aligned}
\right.
\end{equation}
in the sense of Proposition \ref{existencerho} and where $K$ denotes the matrix \eqref{defK}. Then, the solution $\fe$ converges as $\eps$ tends to $0$ to the fluid limit $\rho$ and we have the estimate
\begin{equation}\label{mainestimate}
\sup\limits_{t\in[0,T]}\E\|\fe_t-\rho_t\|_{L^1_{x,v}}\leq C\eps.
\end{equation}
\end{theorem}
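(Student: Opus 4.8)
The plan is to prove the convergence estimate via the Hilbert expansion announced in the introduction, writing $\fe = \rho + \eps f_1 + \eps^2 f_2 + \eps^3 f_3 + \re$ and then showing separately that the correctors contribute $O(\eps)$ and that the remainder $\re$ is $O(\eps)$ in the norm $X=L^\infty(0,T;L^1(\Omega;L^1_{x,v}))$. The correctors are determined by matching powers of $\eps$ in \eqref{rt}: substituting the ansatz and using the relaxation structure of $\sigma(\li{\cdot})L(\cdot)$, the leading balance forces $\rho$ independent of $v$ and solving the limit equation \eqref{EqF}, while $f_1,f_2,f_3$ are obtained by successively inverting $L$ on the orthogonal complement of its kernel (the functions of zero $v$-average). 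Since $L$ has a spectral gap on mean-zero functions and $\sigma$ is bounded below by $\sigma_*$ (hypothesis (H1)), each corrector can be written explicitly in terms of $\rho$, $\nabla_x\rho$ and lower-order data; the null-flux hypothesis \eqref{nullflux} guarantees the solvability condition at each order is met. I would first record these explicit formulas, then invoke the regularity of $\rho$ (from the cited regularity result) to conclude that $f_1,f_2,f_3$ are bounded in $X$, so that $\eps f_1+\eps^2 f_2+\eps^3 f_3 = O(\eps)$.

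The heart of the proof is the remainder estimate. Writing the equation satisfied by $\re$, one finds that $\re$ solves a kinetic equation of the same type as \eqref{rt}, with the singular relaxation term $\frac{1}{\eps^2}\sigma(\li{\fe})L(\re)$ plus source terms coming from the mismatch of the truncated expansion, together with the multiplicative noise $\re\,Q\dd W + G\re\,\dd t$ in It\^o form. The strategy is to apply the It\^o formula to $t\mapsto \int_{\T^N\times V}\varphi_\delta(\re)\,\dd x\,\dd v$ (and symmetrically to $\varphi_\delta(-\re)$), where $\varphi_\delta$ is the smoothed positive-part function from \eqref{defvarphi}, and then let $\delta\to 0$ to recover control of $\E\|\re\|_{L^1_{x,v}}$. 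The It\^o correction terms involving the noise are quadratic in $\re$ and, after integration against $\varphi_\delta''$, are controlled using the regularity bound \eqref{regularnoise01} on $Q$; this is precisely the point where the expansion must be carried to order $3$ rather than $2$, since a shorter expansion leaves a source term too large to absorb against the It\^o correction. The dissipative structure \eqref{sigmaLL2} and the accretivity estimates of Lemma \ref{accr} are used to show that the singular relaxation term contributes a non-positive (up to an $O(\delta)$ error) quantity after pairing with $\varphi_\delta'(\re)$, so it may be discarded.

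Assembling these ingredients, I would derive a differential inequality of Gronwall type for $u(t):=\E\|\re_t\|_{L^1_{x,v}}$, of the schematic form $u(t)\le C\eps + C\int_0^t u(s)\,\dd s$, after sending $\delta\to 0$ and using the corrector bounds to estimate the source terms by $O(\eps)$. Gronwall's lemma then yields $\sup_{t\in[0,T]}u(t)\le C\eps$, and combining with the $O(\eps)$ bound on the corrector contribution gives \eqref{mainestimate} by the triangle inequality.

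I expect the main obstacle to be the remainder estimate, and specifically the interplay between the singular factor $\eps^{-2}$ in front of the relaxation term and the It\^o correction from the noise. One must verify that, at order $3$, the genuinely dangerous source terms are of size $O(\eps)$ in $X$ uniformly in $\delta$, while the terms of size $O(\eps^{-1})$ or $O(1)$ either vanish by the solvability conditions built into the definition of the correctors or are absorbed by the dissipativity of $L$; tracking these cancellations carefully, and controlling the $\delta$-dependent errors from Lemma \ref{accr} so that they vanish in the limit, is the delicate part. A secondary technical point is justifying the application of It\^o's formula in the $L^1_{x,v}$ setting for a process solving a kinetic equation with an unbounded (as $\eps\to 0$) coefficient, which presumably relies on the a priori regularity and integrability established in the existence theory (Proposition \ref{solkinetic}) and the stochastic averaging lemma.
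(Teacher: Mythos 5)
Your global scheme (Hilbert expansion, It\^o formula applied to $\varphi_\delta(\re)$ and $\varphi_\delta(-\re)$, accretivity of $\sigma(\li{\cdot})L(\cdot)$ via Lemma \ref{accr}, then Gronwall) is the paper's scheme, but your construction of the third corrector is wrong, and this is precisely the point on which the whole proof turns. You propose to obtain $f_3$, like $f_1$ and $f_2$, by ``successively inverting $L$'' on mean-zero functions, i.e.\ as a static corrector determined by the drift hierarchy and bounded in $X$ uniformly in $\eps$. A static corrector only modifies drift terms, whereas the dangerous term in the remainder equation is a \emph{noise} term: differentiating $f_1=-\sigma(\rho)^{-1}a(v)\cdot\nabla_x\rho$ produces, through $-\eps\,\dd f_1$, the martingale contribution $-\eps f_1\left(1-\sigma(\rho)^{-1}\sigma'(\rho)\rho\right)Q\dd W_t$, which no inversion of $L$ can remove. (Relatedly, your assertion that the remainder's noise is $\re\,Q\dd W_t+G\re\,\dd t$ is false and begs the question: if the noise coefficient were $\re$ itself, the It\^o correction would be bounded by $C\delta$ as in the uniqueness step, and an expansion to order $2$ would already suffice.) Quantitatively, with any static $f_3$ the noise coefficient $H$ in the remainder equation still contains $\eps f_1\sigma(\rho)^{-1}\sigma'(\rho)\rho$, so the It\^o correction $\E\int\varphi_\delta''(\re_s)G|H_s|^2$ is of order $\eps^2/\delta$, while Lemma \ref{accr} applied to the singular term costs $C\delta/\eps^2$; optimizing $\delta\sim\eps^2$ yields an $O(1)$ error and the Gronwall inequality cannot be closed. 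The paper's $\feth$ is of a different nature (Proposition \ref{propf3}): it solves the singularly perturbed \emph{stochastic} evolution equation $\eps^2\dd\feth-\sigma(\rho)L(\feth)\,\dd t=f_1\sigma(\rho)^{-1}\sigma'(\rho)\rho\,Q\dd W_t$, constructed by stochastic convolution with the relaxation semigroup $U^\eps(t,s)$ of Proposition \ref{semigroup}; because $\feth$ carries the noise itself, the combination $-\eps^3\dd\feth+\eps\sigma(\rho)L(\feth)\,\dd t$ exactly cancels the $O(\eps)$ noise term, leaving $H=(\fe-\rho-\eps f_1)-\eps^2 f_{2,s}$ and hence $\E\int G|H_s|^2\mathbf{1}_{|\re_s|\leq\delta}\leq C(\eps^4+\delta^2)$. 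Note also that this $\feth$ is \emph{not} bounded in $X$: estimate \eqref{f3infty} gives $\sup_t\E\|\feth\|_{L^1_{x,v}}\leq C\eps^{-1}$, so $\eps^3\feth=O(\eps^2)$ — still enough for the triangle inequality, but incompatible with your claim that all three correctors are $O(1)$ in $X$.

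A second, smaller but genuine error is your plan to ``let $\delta\to0$'' at the end. Even with the correct $\feth$, the It\^o correction contributes $\frac{C}{\delta}(\eps^4+\delta^2)$, and the term $\eps^4/\delta$ blows up as $\delta\to0$ at fixed $\eps$; so the limit $\delta\to 0$ cannot be taken. The paper instead keeps $\delta$ finite and balances the two competing errors $C\delta/\eps^2$ (from Lemma \ref{accr}) and $C\eps^4/\delta$ by the choice $\delta=\eps^3$, which is exactly what produces the rate $O(\eps)$ in \eqref{mainestimate}. Sending $\delta\to0$ is legitimate only in the uniqueness argument of Proposition \ref{solkinetic}, where the noise coefficient on the support of $\varphi_\delta''$ is bounded by $\delta$.
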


\section{Formal Hilbert expansion}\label{sec:formalhilbert}\label{sec2}

\noindent In this section, we derive formally the limit equation satisfied by $\fe$ as $\eps$ goes to $0$. To do so, we classically introduce the following Hilbert expansion of the solution $\fe$:
$$\fe=f_0+\eps f_1+\eps^2f_2 + ...$$
Then, discarding the terms with positive power of $\eps$, equation $(\ref{rt})$ reads 
$$
\begin{aligned}
\dd f_0 & =\  -\frac{1}{\eps}a(v)\cdot\nabla_x f_0\ \dd t -a(v)\cdot\nabla_x f_1\ \dd t
+ \frac{1}{\eps^2}\sigma(\li{f_0}+\eps \li{f_1}+\eps^2\li{f_2})L(f_0+\eps f_1+\eps^2f_2)\ \dd t \\
 & +\ f_0\circ Q\dd W_t + O(\eps).
\end{aligned}
$$
Putting the terms with the same power of $\eps$ together and omitting once again those with positive power of $\eps$, we have
$$
\begin{aligned}
\dd f_0 & =\ \frac{1}{\eps^2}\sigma(\li{f_0})L(f_0)\ \dd t + \left(-\frac{1}{\eps}a(v)\cdot\nabla_x f_0 +\frac{1}{\eps}\sigma(\li{f_0})L(f_1)\right. \\
&\qquad +\left.\frac{1}{\eps^2}\left[\sigma(\li{f_0}+\eps\li{f_1})-\sigma(\li{f_0})\right]L(f_0)\right) \dd t \\
& +\ \left( -a(v)\cdot\nabla_x f_1 + \sigma(\li{f_0})L(f_2) +\frac{1}{\eps^2}\left[\sigma(\li{f_0}+\eps\li{f_1}+\eps^2\li{f_2})-\sigma(\li{f_0}+\eps\li{f_1})\right]L(f_0)\right.  \\
& \qquad + \left.\frac{1}{\eps}\left[\sigma(\li{f_0}+\eps\li{f_1})-\sigma(\li{f_0})\right]L(f_1)\right)\dd t + f_0\circ Q\dd W_t+ O(\eps).
\end{aligned}
$$
Next, we identify the terms having the same power of $\eps$. At the order $\eps^{-2}$, we find $\sigma(\li{f_0})L(f_0) = 0$,  which implies $L(f_0)=0$; thus we have $f_0=\li{f_0}=:\rho$. Then, at the order $\eps^{-1}$, with the fact that $L(f_0)=0$, we find 
$$L(f_1) = \sigma(\rho)^{-1}\,a(v)\cdot\nabla_x \rho.$$
Since the integral with respect to $v\in V$ of the right-hand side vanishes thanks to $(\ref{nullflux})$, this equation can be solved by 
\begin{equation}\label{formalf1}
f_1:=-\sigma(\rho)^{-1}\,a(v)\cdot\nabla_x \rho,
\end{equation}
and we point out that $\li{f_1}=0$. Finally, at the order $\eps^0$, we get
\begin{equation}\label{formal3}
\dd \rho = -a(v)\cdot\nabla_x f_1\ \dd t + \sigma(\rho)L(f_2)\ \dd t + \rho\circ Q\dd W_t.
\end{equation}
By integration with respect to $v\in V$ and with $\int_VL(f_2)\dd v=0$, we discover 
$$\dd \rho = -\mathrm{div}_x(\li{a(v) f_1})\ \dd t + \rho\circ\dd W_t,$$
that is, thanks to the expression of $f_1$ given by $(\ref{formalf1})$,
\begin{equation}\label{formalEqF}
\dd \rho - \mathrm{div}_x\left(\sigma(\rho)^{-1}\li{K}\,\nabla_x \rho \right) \dd t = \rho\circ Q\dd W_t,
\end{equation}
where $K=a(v)\otimes a(v).$
Furthermore, if $\rho$ satisfies equation $(\ref{formalEqF})$, equation $(\ref{formal3})$ now reads
$$
\sigma(\rho)L(f_2) = \mathrm{div}_x\left(\sigma(\rho)^{-1}(\li{K}-K)\nabla_x \rho\right),
$$
and since the integral with respect to $v\in V$ of the right-hand side vanishes, this can indeed be solved by setting
\begin{equation}\label{formalf2}
f_2 := -\sigma(\rho)^{-1}\mathrm{div}_x\left(\sigma(\rho)^{-1}(\li{K}-K)\nabla_x \rho  \right).
\end{equation}
To conclude, the solution $\fe$ of the kinetic problem $(\ref{rt})$ formally converges to an equilibrium state $\rho$ which satisfies the \textit{non-linear} stochastic partial differential equation $(\ref{formalEqF})$ given above.

\section{Convergence of $\fe$}\label{sec3}

\noindent In this section, we now give a rigorous proof of the convergence of $\fe$. The main difficulty is that the remainder $\re:=\fe-\rho-\eps f_1-\eps^2f_2$ can only be appropriately estimated in $L^1_{x,v}$. As a result, in our stochastic case, we will need to apply It\^o formula in $L^1_{x,v}$. This gives rise to some difficulties. So, in the sequel, we will need to push the Hilbert expansion of $\fe$ up to order $3$ to overcome these problems. To begin with, we solve the kinetic problem $(\ref{rt})$ and the limiting equation $(\ref{EqF})$ and investigate the regularity and properties of the solutions.

\subsection{Resolution of the kinetic problem}

\noindent Let us study the kinetic problem $(\ref{rt})$. We solve it using a standard semigroup approach combined with a regularization of the random noise term. Let $p\in[1,\infty]$. We introduce the contraction semigroup $(\mathcal{U}(t))_{t\geq 0}$ generated by the linear operator $-a(v)\cdot\nabla_x$ on the space $L^p_{x,v}$.

\begin{proposition}\label{solkinetic}Let $\rho_{\text{in}}$ be a smooth non-negative function which does not depend on $v\in V$. Then there exists a unique non-negative strong It\^o solution $\fe$ to the kinetic problem $(\ref{rt})$ which belongs to  $L^2(\Omega;L^2(0,T;L^2_{x,v}))$ with $\nabla_x\fe\in L^2(0,t;L^2_{x,v})$ a.s. for all $t<T$, that is, $\PP-$a.s. for all $t\in[0,T]$,
$$\fe(t)=\rho_{\text{in}}-\frac{1}{\eps}\int_0^{t}a(v)\cdot\nabla_x\fe_s\, \dd s+\frac{1}{\eps}\int_0^{t}\sigma(\li{\fe_s})L(\fe_s)\, \dd s + \int_0^{t} G\fe_s\, \dd t + \int_0^{t}\fe_s\, Q\dd W_s.
$$
 Furthermore, we have the following uniform bound
\begin{equation}\label{unifbound}
\sup\limits_{t\in[0,T]}\E\|\fe(t)\|^2_{L^2_{x,v}}\leq C.
\end{equation}
\end{proposition}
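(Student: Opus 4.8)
The plan is to first pass from the Stratonovich form of \eqref{rt} to its Itô form, i.e. to the integral identity displayed in the statement, using $\fe\circ Q\dd W_t=\fe Q\dd W_t+G\fe\,\dd t$, and then to realise $\fe$ as a fixed point of the mild formulation built on the transport semigroup $\mathcal U$ generated by $-a(v)\cdot\nabla_x$. For $\eps>0$ fixed all the prefactors $1/\eps$, $1/\eps^2$ are harmless constants, so the issue is purely the structure of the two nonlinear inputs: the relaxation term $f\mapsto\sigma(\li{f})L(f)$ and the multiplicative noise $f\mapsto fQ$. The noise coefficient is \emph{linear} in $f$, so its stochastic convolution is handled by the $\gamma$-radonifying bound \eqref{gammanorm}, which makes it Lipschitz on $L^2_{x,v}$ with constant controlled by $\kappa_{0,\infty}$; this is what lets a Banach contraction close. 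The relaxation term is more delicate, and this is where I expect the main obstacle to lie.

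The key difficulty is that $f\mapsto\sigma(\li{f})L(f)$ is \emph{not} globally Lipschitz on $L^2_{x,v}$: decomposing $\sigma(\li{f})L(f)-\sigma(\li{g})L(g)$ produces the term $\big(\sigma(\li{f})-\sigma(\li{g})\big)L(g)$, a product of an $L^2_x$ factor with an $L^2_{x,v}$ factor that cannot be controlled in $L^2_{x,v}$ without an $L^\infty_x$ bound (and, relatedly, $y\mapsto\sigma(y)y$ has unbounded derivative). I would therefore run the fixed point on a regularised problem: truncate the cylindrical noise to finitely many modes and truncate the argument of $\sigma$ so that the relaxation term becomes globally Lipschitz on $L^2_{x,v}$. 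A contraction argument in $L^2(\Omega;C([0,T];L^2_{x,v}))$, on a small time interval and then glued, yields a unique solution $\fe$ of each regularised equation.

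Next I would derive a priori estimates uniform in the regularisation. Applying the Itô formula to $\|\fe\|^2_{L^2_{x,v}}$, the transport term drops ($a(v)$ is $x$-independent), the relaxation term is dissipative by \eqref{sigmaLL2}, and the noise drift plus the Itô correction are bounded by $C\kappa_{0,\infty}\|\fe\|^2_{L^2_{x,v}}$; Gronwall then gives exactly the uniform bound \eqref{unifbound}. Non-negativity I would first obtain at the regularised level by comparing $\fe$ with the trivial solution $0$ (which solves \eqref{rt} since $\sigma(0)L(0)=0$ and the noise vanishes at $0$) and applying the Itô formula to a smooth approximation of the negative part, using Lemma \ref{accr}; an analogous maximum-principle computation yields an $L^\infty$ bound, which shows the truncation of $\sigma$ is eventually inactive and can be removed. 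The spatial regularity $\nabla_x\fe\in L^2$ comes from a parallel energy estimate on $\partial_{x_i}\fe$: the dissipative inequality \eqref{sigmaLL2} survives differentiation because $(\sigma(\li{\fe})L(\partial_{x_i}\fe),\partial_{x_i}\fe)\le 0$, while the cross term $\sigma'(\li{\fe})\partial_{x_i}\li{\fe}\,L(\fe)$ and the differentiated noise are absorbed using $\kappa_{1,\infty}$ and \eqref{regularnoise}. To pass to the limit as the noise regularisation is removed, the nonlinear factor $\sigma(\li{\fe})$ requires strong compactness of the velocity averages $\li{\fe}$, which is furnished by the stochastic averaging Lemma \ref{lemmemoyenne} under the non-degeneracy condition \eqref{nondegenlemmemoy}; the stochastic integral is identified in the limit by a standard martingale/stochastic-compactness argument.

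Finally, uniqueness (and the fact that the solution is a genuine strong Itô solution) I would prove by the $L^1$ contraction announced before Lemma \ref{accr}: given two solutions $f,g$, apply the Itô formula with $\varphi_{\delta}$ to $f-g$, control the nonlinear relaxation difference by $C(1+\|f\|_{L^1_v})\delta$ via \eqref{accr1}–\eqref{accr2}, note that the (linear) noise contributions produce a mean-zero martingale plus an Itô correction bounded through $\kappa_{0,\infty}$, and let $\delta\to0$ to reach $\sup_{t}\E\|f_t-g_t\|_{L^1_{x,v}}=0$. The hardest points, as indicated, are the coupling of the contraction argument with the $L^\infty$ and positivity bounds needed to tame the non-Lipschitz, nonlinear relaxation operator, and the passage to the limit in $\sigma(\li{\fe})L(\fe)$, for which the stochastic averaging lemma is the decisive regularising tool.
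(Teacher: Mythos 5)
Your overall architecture (regularize, fixed point on the mild formulation built on $\mathcal{U}$, a priori $L^2$ bound via \eqref{sigmaLL2} and Gronwall, non-negativity and uniqueness by the It\^o/$\varphi_{\delta}$ argument with Lemma \ref{accr}, strong compactness of the velocity averages via Lemma \ref{lemmemoyenne} to pass to the limit) is the paper's. But there is a genuine gap at the heart of the existence step. You correctly diagnose that $f\mapsto \sigma(\li{f})L(f)$ is not Lipschitz on $L^2_{x,v}$ because of the term $\big(\sigma(\li{f})-\sigma(\li{g})\big)L(g)$; your proposed remedy, however --- truncate the argument of $\sigma$ and contract in $L^2(\Omega;C([0,T];L^2_{x,v}))$ --- does not address the obstacle you yourself identified. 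By (H1)--(H2) the function $\sigma$ is already bounded and globally Lipschitz, so truncating its argument changes nothing: pointwise in $x$ the bad term is still only bounded by $\|\sigma\|_{\mathrm{Lip}}\,|\li{f}-\li{g}|(x)\,\|L(g)(x,\cdot)\|_{L^2_v}$, and to integrate this in $x$ one needs either $\li{f}-\li{g}\in L^{\infty}_x$ or $x\mapsto\|L(g)(x,\cdot)\|_{L^2_v}\in L^{\infty}_x$; neither is available in an $L^2$-based contraction space, whatever truncation of $\sigma$ you impose. So the Banach fixed point, as you set it up, does not close.

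The paper resolves exactly this point by truncating differently and changing the space: the whole relaxation term is multiplied by $\theta_R(\|f\|_{L^{\infty}_{x,v}})$ and the contraction is run in $L^p(\Omega;C([0,T_0];L^{\infty}_{x,v}))$ with $p>N$, the $L^{\infty}_{x,v}$ norm being reached through a $W^{1,p}_{x,v}$ estimate and the Sobolev embedding $W^{1,p}_{x,v}\subset L^{\infty}_{x,v}$. This in turn dictates the choice of regularization of the noise: the coefficient is mollified, $f*\xi_{\delta}\,Q\dd W_t$, precisely so that the $x$- and $v$-derivatives of the stochastic convolution can be put onto $\xi_{\delta}$ (whence the constants involving $\|\nabla_x\xi_{\delta}\|_{L^1_{x,v}}$ and $\|\nabla_v\xi_{\delta}\|_{L^1_{x,v}}$). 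Your alternative regularization --- keeping finitely many noise modes --- does not serve this purpose: the coefficient is still $f$ itself, with no gain of differentiability, so the $W^{1,p}_{x,v}$, hence $L^{\infty}_{x,v}$, estimate of the stochastic convolution is unavailable. Once the existence step is repaired in this way, the remaining items you list (the $\delta$-dependent $L^{\infty}$ bound which makes the truncation inactive via the stopping times $\tau_{R,\delta}$, the $R$- and $\delta$-uniform $L^p$ bound, positivity, the averaging-lemma compactness, and Skorokhod together with the Gy\"ongy--Krylov argument to return to the original probability space, which your ``standard martingale/stochastic-compactness argument'' leaves implicit but which requires the pathwise uniqueness you prove) are essentially the paper's proof.
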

\noindent Before giving the proof of the proposition, we recall a classical result about the regularization of the stochastic convolution.
\begin{lemma}\label{RegulStochastic}
Let $p\in(2,\infty)$. Let $\Psi\in L^p(\Omega;L^p(0,T;L^p_{x,v}))$. We define
$$
z(t):=\int_0^t\mathcal{U}(t-s)\Psi(s)\,Q\dd W_s,\quad t\in[0,T].
$$  
Then $z\in L^p(\Omega;C([0,T];L^p_{x,v}))$ and
$$\E\sup\limits_{t\in[0,T]}\|z(t)\|^p_{L^p_{x,v}}\leq C\,\E\|\Psi\|^p_{L^p(0,T;L^p_{x,v})},$$
for some constant $C$ which depends on $p$ and $\kappa_{0,\infty}$.
\end{lemma}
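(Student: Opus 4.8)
The plan is to establish the maximal inequality by the factorization method of Da Prato, Kwapie\'n and Zabczyk, adapted to the $2$-smooth Banach spaces $L^p_{x,v}$ through the $\gamma$-radonifying formalism of Section~\ref{sec:noise}. Fix an exponent $\alpha\in\left(\max\left(\frac1p,\frac12-\frac1p\right),\frac12\right)$, which is a nonempty interval precisely because $p>2$. The starting point is the elementary identity
\begin{equation*}
\int_r^t (t-s)^{\alpha-1}(s-r)^{-\alpha}\,\dd s=\frac{\pi}{\sin(\pi\alpha)},
\end{equation*}
which, combined with the semigroup property of $\mathcal{U}$ and the stochastic Fubini theorem, yields the factorization $z(t)=\frac{\sin(\pi\alpha)}{\pi}\int_0^t(t-s)^{\alpha-1}\mathcal{U}(t-s)Y(s)\,\dd s$, where
\begin{equation*}
Y(s):=\int_0^s (s-r)^{-\alpha}\mathcal{U}(s-r)\Psi(r)\,Q\dd W_r.
\end{equation*}

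The first step is to estimate $Y$ in $L^p(\Omega;L^p(0,T;L^p_{x,v}))$. Since $L^p_{x,v}$ is $2$-smooth for $p\geq 2$, the Burkholder--Davis--Gundy inequality for stochastic integrals in such spaces gives, pointwise in $s$,
\begin{equation*}
\E\|Y(s)\|_{L^p_{x,v}}^p\leq C\,\E\left(\int_0^s (s-r)^{-2\alpha}\big\|\mathcal{U}(s-r)\Psi(r)\,Q\big\|_{\gamma(L^2(\T^N),L^p_{x,v})}^2\,\dd r\right)^{p/2}.
\end{equation*}
Because $\mathcal{U}$ is a contraction on $L^p_{x,v}$ and because of the $\gamma$-radonifying bound \eqref{gammanorm}, the integrand is controlled by $C\kappa_{0,\infty}(s-r)^{-2\alpha}\|\Psi(r)\|_{L^p_{x,v}}^2$. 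Applying Minkowski's integral inequality (valid since $p\geq 2$) to bring the expectation inside and writing $\Phi(r):=(\E\|\Psi(r)\|_{L^p_{x,v}}^p)^{2/p}$, we obtain $(\E\|Y(s)\|_{L^p_{x,v}}^p)^{2/p}\leq C\int_0^s(s-r)^{-2\alpha}\Phi(r)\,\dd r$, i.e. the right-hand side is a Riesz potential of order $1-2\alpha\in(0,1)$ applied to $\Phi$. Since $\Phi\in L^{p/2}(0,T)$ with $\|\Phi\|_{L^{p/2}}^{p/2}=\E\|\Psi\|_{L^p(0,T;L^p_{x,v})}^p$, the Hardy--Littlewood--Sobolev inequality — whose exponent condition $\alpha>\frac12-\frac1p$ is exactly our choice of $\alpha$ — places $G(s):=(\E\|Y(s)\|_{L^p_{x,v}}^p)^{2/p}$ in $L^b(0,T)$ with $b\geq p/2$; since $\E\|Y\|_{L^p(0,T;L^p_{x,v})}^p=\int_0^T G(s)^{p/2}\,\dd s$ by Fubini and $L^b(0,T)\hookrightarrow L^{p/2}(0,T)$ on the finite interval, we conclude $\E\|Y\|_{L^p(0,T;L^p_{x,v})}^p\leq C\,\E\|\Psi\|_{L^p(0,T;L^p_{x,v})}^p$.

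The second step is purely deterministic: I will show that $g\mapsto R_\alpha g(t):=\int_0^t(t-s)^{\alpha-1}\mathcal{U}(t-s)g(s)\,\dd s$ maps $L^p(0,T;L^p_{x,v})$ continuously into $C([0,T];L^p_{x,v})$. Using again the contraction property of $\mathcal{U}$ and Hölder's inequality in time with the conjugate exponent $p'=p/(p-1)$, one has $\|R_\alpha g(t)\|_{L^p_{x,v}}\leq \left(\int_0^t(t-s)^{(\alpha-1)p'}\dd s\right)^{1/p'}\|g\|_{L^p(0,T;L^p_{x,v})}$, where the time integral converges precisely because $\alpha>\frac1p$; continuity in $t$ follows from a standard density and equicontinuity argument. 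Combining the two steps pathwise through $z=\frac{\sin(\pi\alpha)}{\pi}R_\alpha Y$ gives
\begin{equation*}
\E\sup_{t\in[0,T]}\|z(t)\|_{L^p_{x,v}}^p\leq C\,\E\|Y\|_{L^p(0,T;L^p_{x,v})}^p\leq C\,\E\|\Psi\|_{L^p(0,T;L^p_{x,v})}^p,
\end{equation*}
which is the claim, with $C$ depending only on $p$ and $\kappa_{0,\infty}$.

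I expect the main obstacle to be the first step: justifying the vector-valued Burkholder--Davis--Gundy inequality in the $2$-smooth spaces $L^p_{x,v}$ and controlling the $\gamma$-radonifying norm of $\mathcal{U}(s-r)\Psi(r)Q$ through \eqref{gammanorm} and the contractivity of the transport semigroup, together with the careful choice of $\alpha$ that makes the Hardy--Littlewood--Sobolev exponent condition $\alpha>\frac12-\frac1p$ and the integrability requirement $\alpha>\frac1p$ simultaneously compatible — which is possible exactly in the regime $p>2$ assumed in the statement.
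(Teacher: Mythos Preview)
Your proof is correct and follows exactly the approach sketched in the paper: the factorization method combined with the Burkholder--Davis--Gundy inequality in the $2$-smooth space $L^p_{x,v}$ and the $\gamma$-radonifying bound \eqref{gammanorm}. The only minor remark is that in Step~1 you could replace the Hardy--Littlewood--Sobolev inequality by Young's inequality (the kernel $r\mapsto r^{-2\alpha}$ is in $L^1(0,T)$ as soon as $\alpha<\tfrac12$), which removes the extra constraint $\alpha>\tfrac12-\tfrac1p$; but your argument is valid as written.
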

\noindent The proof relies on the so-called factorization method (see \cite[Section 11]{peszat}) combined with the application of the Burkholder-David-Gundy inequality for martingales with values in a $2$-smooth Banach space (see \cite{gamma1} and \cite{gamma2}) and the bound $(\ref{gammanorm})$.
\begin{proof} {\em Existence and uniqueness part.} In this part of the proof, for the sake of convenience, we set $\eps=1$.

{\em Step 1: Uniqueness.} We first begin with the proof of uniqueness for equation $(\ref{rt})$. So let $f$ and $g$ be two non-negative solutions of $(\ref{rt})$ with the same initial condition $\rho_{\text{in}}$ and which at least belong to $L^1(\Omega;L^1(0,T;L^1_{x,v}))$. We set $r:=f-g$ and estimate $r$ in $L^1_{x,v}$ by applying the It\^o formula with the $C^2$ function $\varphi_{\delta}$ defined by $(\ref{defvarphi})$ which approximates $x\mapsto x^+$. This gives (note that the term relative to $a(v)\cdot\nabla_x\re$ cancels)
$$
\begin{aligned}
\E\int_{\T^N\times V}\varphi_{\delta}(r_t)\ & =\ \E\int_0^t\int_{\T^N\times V}\varphi_{\delta}'(f_s-g_s)\left[\sigma(\li{f_s})L(f_s)-\sigma(\li{g_s})L(g_s)\right]\, \dd s \\
& +\ \E\int_0^t\int_{\T^N\times V}\varphi_{\delta}'(r_s)Gr_s\,\dd s +\ \E\int_0^t\int_{\T^N\times V}\varphi_{\delta}''(r_s)G|r_s|^2\,\dd s.
\end{aligned}
$$
Since $x^+\leq\varphi_{\delta}(x)+\delta$, we have
$$\E\|(r_t)^+\|_{L^1_{x,v}}\leq\ \E\int_{\T^N\times V}\varphi_{\delta}(r_t) + \delta.$$
Then, for the next term, we use the accretivity property of the operator $\sigma(\li{\cdot}) L(\cdot)$. Namely, with Lemma $\ref{accr}$, we get 
$$
\E\int_0^t\int_{\T^N\times V}\varphi_{\delta}'(f_s-g_s)\left[\sigma(\li{f_s})L(f_s)-\sigma(\li{g_s})L(g_s)\right] \leq\ C\delta\left(1+\E\int_0^T\|f_s\|_{L^1_{x,v}}\dd s\right)\leq C\delta.$$
For the following term, we just observe that $|\varphi_{\delta}'|\leq 1$ and that $\|G\|_{L^{\infty}_x}<\infty$ with $(\ref{regularnoise01})$ so that
$$
\E\int_0^t\int_{\T^N\times V}\varphi_{\delta}'(r_s)Gr_s\,\dd s\ \leq\ C\,\E\int_0^t\|r_s\|_{L^1_{x,v}}\,\dd s.
$$
For the last term of the It\^o formula, we point out that $\varphi_{\delta}''$ is zero on $[0,\delta]^c$ and that $|\varphi_{\delta}''|\leq 1/\delta$ on $[0,\delta]$. Thus, we obtain
$$
\E\int_0^t\int_{\T^N\times V}\varphi_{\delta}''(r_s)G|r_s|^2\,\dd s\ \leq\ C\delta.
$$
Summing up all the previous bounds now yields
$$\E\|(r_t)^+\|_{L^1_{x,v}} \leq\  C\delta+C\,\E\int_0^t\|r_s\|_{L^1_{x,v}}\,\dd s.$$
A similar work can be done for $(r)^-=(-r)^+$. As a result we obtain the estimate
$$\E\|r_t\|_{L^1_{x,v}} \leq\  C\delta+C\,\E\int_0^t\|r_s\|_{L^1_{x,v}}\,\dd s.$$
Since this inequality holds true for all $\delta>0$, an application of the Gronwall lemma yields $f=g$ in $L^1(\Omega;L^1(0,T;L^1_{x,v}))$.

{\em Step 2: Resolution of a regularized equation.} For $\delta>0$, we will denote by $\xi_{\delta}$ a mollifier on $\T^N\times V$ as $\delta\to 0$. This step is devoted to the proof of existence of a solution $\fd$ to the regularized equation
\begin{equation}\label{rtregul}
\begin{array}{cc}
\displaystyle \dd f\ +\ a(v)\cdot\nabla_x f\ \dd t\ =\ \sigma(\li{f})L(f)\ \dd t\ +\ Gf\ \dd t\ +\ f*\xi_{\delta}\ Q\dd W_t,
\end{array} 
\end{equation}
with $\delta>0$ being fixed. Let us fix $p>N$. We will apply a fixed point argument in the space $L^p(\Omega;C([0,T_0];L^{\infty}_{x,v}))$ with $T_0$ sufficiently small. Before doing this, we first need to truncate the equation to overcome with the non-linear term $f\mapsto\sigma(\li{f})Lf$ which is not Lipschitz. Following for example \cite{debdeb} or \cite{Gyongy}, we introduce $\theta\in C^{\infty}_0(\R)$ whose compact support is embedded in $(-2,2)$ and such that $\theta(x)=1$ for $x\in[-1,1]$ and $0\leq\theta\leq 1$ on $\R$. Then, for $R>0$, we set $\theta_R(x)=\theta(x/R)$. We are now considering the following equation:
\begin{equation}\label{rtregultrunc}
\begin{array}{cc}
\displaystyle \dd f\ +\ a(v)\cdot\nabla_x f\ \dd t\ =\ \theta_R(\|f\|_{L^{\infty}_{x,v}})\sigma(\li{f})L(f)\ \dd t\ +\ Gf\ \dd t\ +\ f*\xi_{\delta}\ Q\dd W_t,
\end{array} 
\end{equation}
and we are looking for a mild solution $\frd$, that is,
\begin{equation}\label{rtregultruncmild}
\begin{aligned}
f(t)\ =\ & \mathcal{U}(t)\rho_{\text{in}}+\int_0^t\mathcal{U}(t-s)\theta_R(\|f_s\|_{L^{\infty}_{x,v}})\sigma(\li{f_s})L(f_s)\, \dd s + \int_0^t \mathcal{U}(t-s)Gf_s\, \dd t \\
&\qquad +\ \int_0^t \mathcal{U}(t-s)f_s*\xi_{\delta}\, Q\dd W_s.
\end{aligned}
\end{equation}
Here, as usual, if $f\in L^p(\Omega;C([0,T_0];L^{\infty}_{x,v}))$, we denote by $\mathcal{T}f$ the right-hand side of the previous equation and we shall verify that the Banach fixed-point Theorem applies. We refer the reader to \cite[Proof of Proposition 3.1]{debdeb} for a precise proof in a similar setting. Here, we just prove the contraction property of the stochastic integral. Thanks to Lemma \ref{RegulStochastic} and with Young's inequality, we easily obtain
$$
\E\sup\limits_{t\in[0,T_0]}\left\|\int_0^t \mathcal{U}(t-s)(f_s-g_s)*\xi_{\delta}\, Q\dd W_s\right\|^p_{L^p_{x,v}} \leq C\, T_0\, \E\sup\limits_{s\in[0,T_0]}\|f_s-g_s\|^p_{L^{\infty}_{x,v}},
$$
where the constant $C$ depends on $p$ and $\kappa_{0,\infty}$. Now, since $\nabla_x\mathcal{U}(t)g=\mathcal{U}(t)\nabla_xg$, we can similarly obtain
$$
\E\sup\limits_{t\in[0,T_0]}\left\|\nabla_x\int_0^t \mathcal{U}(t-s)(f_s-g_s)*\xi_{\delta}\, Q\dd W_s\right\|^p_{L^p_{x,v}} \leq C\, T_0\, \E\sup\limits_{s\in[0,T_0]}\|f_s-g_s\|^p_{L^{\infty}_{x,v}},
$$
where the constant $C$ now depends on $p$, $\kappa_{0,\infty}$, $\kappa_{1,\infty}$ and $\|\nabla_x\xi_{\delta}\|_{L^1_{x,v}}$. Furthermore, with the identity $\nabla_v\mathcal{U}(t)g=-ta'(v)\mathcal{U}(t)\nabla_xg+\mathcal{U}(t)\nabla_vg$, a similar bound can be proved for the derivatives of the stochastic integral with respect to $v\in V$. To sum up, we are led to 
$$
\E\sup\limits_{t\in[0,T_0]}\left\|\int_0^t \mathcal{U}(t-s)(f_s-g_s)*\xi_{\delta}\, Q\dd W_s\right\|^p_{W^{1,p}_{x,v}} \leq C\, (T_0+T_0^2)\, \E\sup\limits_{s\in[0,T_0]}\|f_s-g_s\|^p_{L^{\infty}_{x,v}},
$$
for some constant $C$ which depends on $p$, $\kappa_{0,\infty}$, $\kappa_{1,\infty}$, $\|\nabla_x\xi_{\delta}\|_{L^1_{x,v}}$ and $\|\nabla_v\xi_{\delta}\|_{L^1_{x,v}}$. Finally, with the Sobolev embedding $W^{1,p}_{x,v}\subset L^{\infty}_{x,v}$ which holds true since $p>N$, we can conclude that the contraction property of the stochastic term is satisfied in $L^p(\Omega;C([0,T_0];L^{\infty}_{x,v}))$ provided $T_0$ is sufficiently small. The Banach fixed-point Theorem then applies and gives us a mild solution $\frd$ of $(\ref{rtregultruncmild})$ in $L^p(\Omega;C([0,T_0];L^{\infty}_{x,v}))$. Iterating this argument yields a solution in the space $L^p(\Omega;C([0,T];L^{\infty}_{x,v}))$. Let us introduce, for $R>0$ and $\delta>0$, the following stopping times
$$\tau_{R,\delta}:=\inf\{t\in[0,T], \, \|\frd_t\|_{L^{\infty}_{x,v}}>R\}.$$
We can show, with a similar method as in \cite[Lemma 4.1]{debdeb}, that $\tau_{R,\delta}$ is nondecreasing with $R$ so that we can define $\tau^*_{\delta}:=\lim_{R\to\infty}\tau_{R,\delta}$. The next step is devoted to the proof of some estimates on the solution $\frd$.

{\em Step 3: Estimates on the solution $\frd$.} In this step, we emphasize the dependence through the parameters $R$ and $\delta$ of the constants $C$ appearing in the estimates. For instance $C_{\delta}$ depends on $\delta$ but not on $R$. With the mild formulation $(\ref{rtregultruncmild})$, using the boundedness of $\theta_R$, $\sigma$ and $G$, the contraction property of the semigroup $\mathcal{U}$ in $L^{\infty}_{x,v}$ and evaluating the stochastic integral in $L^{\infty}_{x,v}$ similarly as above, we can obtain the following bound
\begin{equation}\label{frdinfty}
\E\sup\limits_{t\in[0,T]}\|\frd_t\|^p_{L^{\infty}_{x,v}}\leq C_{\delta}.
\end{equation}
Note that the dependence with respect to $\delta$ of this bound is due to the evaluation of the stochastic integral in $L^{\infty}_{x,v}$ by estimating its $W^{1,p}_{x,v}$-norm: this gives rise to the terms $\|\nabla_x\xi_{\delta}\|_{L^1_{x,v}}$ and $\|\nabla_v\xi_{\delta}\|_{L^1_{x,v}}$ which depend on $\delta$. Nevertheless, estimating the solution $\frd$ in $L^p_{x,v}$  with $p>2$ gives a uniform bound with respect to $R$ and $\delta$. Precisely, with the mild formulation $(\ref{rtregultruncmild})$, using the boundedness of $\theta_R$, $\sigma$ and $G$, the contraction property of the semigroup $\mathcal{U}$ in $L^{p}_{x,v}$ and evaluating the stochastic integral in $L^{p}_{x,v}$, $p>2$, thanks to Lemma \ref{RegulStochastic}, we can obtain the following bound
\begin{equation}\label{frd2}
\E\sup\limits_{t\in[0,T]}\|\frd_t\|^p_{L^p_{x,v}}\leq C.
\end{equation}
Finally, we point out that we can also estimate $\nabla_x\frd$ in $L^p_{x,v}$, $p>2$, by differentiating equation $(\ref{rtregultruncmild})$. We obtain the bound 
\begin{equation}\label{dxfrd2}
\E\sup\limits_{t\in[0,T]}\|\nabla_x\frd\|^p_{L^p_{x,v}}\leq C_R.
\end{equation}

{\em Step 4: Definition of $\fd$.} From \eqref{frdinfty} we easily deduce that for all $\delta>0$, $\tau^*_{\delta}=T$ a.s. Thus, we define $\fd$ on $[0,T]=\cup_{R>0}[0,\tau_{R,\delta}]$ by $\fd = \frd$ on $[0,\tau_{R,\delta}]$. Note that this definition makes sense since we have proved uniqueness for the equation \eqref{rtregultruncmild} satisfied by $\frd$. Since $\frd$ is a mild solution of $(\ref{rtregultrunc})$ and since for all $t\in[0,T)$ we have that $\nabla_x\fd$ exists a.s. in $L^p(0,t;L^p_{x,v}))$, $p>2$, thanks to $(\ref{dxfrd2})$, we get that $\fd$ is a strong solution of $(\ref{rtregul})$, that is, $\PP-$a.s. for all $t\in[0,T]$,
\begin{equation}\label{fd}
\fd(t)  = \rho_{\text{in}}-\int_0^{t}a(v)\cdot\nabla_x\fd_s\, \dd s+\int_0^{t}\sigma(\li{\fd_s})L(\fd_s)\, \dd s + \int_0^{t}G\fd_s\, \dd s + \int_0^{t}\fd_s*\xi_{\delta}\, Q\dd W_s.
\end{equation}
Furthermore, with \eqref{frd2} and the fact that $\tau^*_{\delta}=T$ a.s., we deduce that for $p>2$,
$$
\E\sup\limits_{t\in[0,T]}\|\fd_t\|^p_{L^p_{x,v}}\leq C.
$$
Thanks to the H\"older inequality, the previous bound holds true when $p=2$, that is
\begin{equation}\label{fd2}
\E\sup\limits_{t\in[0,T]}\|\fd_t\|^2_{L^2_{x,v}}\leq C.
\end{equation}
Finally, note that, thanks to the equation \eqref{fd}, we can show that $\fd\geq 0$. Indeed, it suffices to apply the It\^o formula with the function $\varphi_{\delta}$ defined by $(\ref{defvarphi})$ to the process $-\fd$. Similarly as in \textit{Step 1}, since $\rho_{\text{in}}\geq 0$, this yields $(\fd)^-=0$, hence the result.

{\em Step 5: Convergence $\delta\to 0$.} Thanks to \eqref{fd2}, up to a subsequence, the sequence $(\fd)_{\delta>0}$ converges weakly in $L^2(\Omega;L^2(0,T;L^2_{x,v}))$ to some $f$. This is not sufficient to pass to the limit in \eqref{fd} due to the non-linear term. Thus we use the following stochastic averaging lemma, the proof of which in given in Appendix B.
\begin{lemma}\label{lemmemoyenne}
Let $\alpha \in (0,1]$. We assume that hypothesis \eqref{nondegenlemmemoy} is satisfied. Let $f$ be bounded in $L^2(\Omega;L^2(0,T;L^2_{x,v}))$ such that
\begin{equation}
\dd f + a(v)\cdot\nabla_x f \dd t = h \dd t + g\, Q\dd W_t,
\end{equation}
with $g$ and $h$ bounded in $L^2(\Omega;L^2(0,T;L^2_{x,v}))$. Then the quantity $\rho = \li{f}$ verifies
$$\E\int_0^T\|\rho_s\|^2_{H^{\alpha/2}_x}\dd s\leq C.$$
\end{lemma}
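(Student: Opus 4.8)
The plan is to pass to the Fourier transform in the space variable $x$ and reduce everything to an estimate at each fixed frequency. On the torus the dual variable $\xi$ is discrete; writing $\widehat f(t,\xi,v)$ for the $x$-Fourier coefficients and $\widehat\rho(t,\xi)=\int_V\widehat f(t,\xi,v)\,\dd v$, the conclusion is, by Plancherel, equivalent to
$$
\sum_{\xi\in\Z^N}(1+|\xi|^2)^{\alpha/2}\,\E\int_0^T|\widehat\rho(s,\xi)|^2\,\dd s\ \le\ C .
$$
The low frequencies $|\xi|\lesssim 1$ are harmless, since there the weight is bounded and $|\widehat\rho|\le\|\widehat f\|_{L^2_v}$, so they are controlled by the $L^2$ bound on $f$. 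It therefore suffices to prove, for each fixed large $\xi$, a gain of the form $\E\int_0^T|\widehat\rho(s,\xi)|^2\,\dd s\le C\,|\xi|^{-\alpha}\,D_\xi$, where $D_\xi$ collects the velocity-$L^2$ mass of $\widehat f$, $\widehat h$ and $\{\widehat{gQe_k}\}_k$ at frequency $\xi$; since $(1+|\xi|^2)^{\alpha/2}|\xi|^{-\alpha}$ is bounded, summing over $\xi$ returns precisely $\|f\|_{L^2}^2+\|h\|_{L^2}^2+\kappa_{0,\infty}\|g\|_{L^2}^2$. The structural reason to follow Bouchut--Desvillettes \cite{bouchut} rather than a classical averaging lemma is that one \emph{cannot} Fourier transform in time: the It\^o integral $\int_0^t gQ\dd W$ is adapted and has no good time-frequency representation. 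The whole argument must be run with the $x$-transform alone.

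For fixed $\xi$ the equation becomes the complex It\^o SDE (arguments suppressed)
$$
\dd\widehat f+i\,(a(v)\cdot\xi)\,\widehat f\,\dd t=\widehat h\,\dd t+\sum_k\widehat{gQe_k}\,\dd\beta_k ,
$$
whose transport part is the pure multiplier $\lambda:=a(v)\cdot\xi$. First I would split the velocity integral with a \emph{fixed} smooth cutoff $\chi$, writing $\widehat f=\chi(\lambda/\mu)\widehat f+(1-\chi(\lambda/\mu))\widehat f$; no optimization of $\mu$ in $|\xi|$ is needed, since the gain $|\xi|^{-\alpha}$ is produced for any fixed $\mu$. On the resonant set $\{|\lambda|\lesssim\mu\}$, Cauchy--Schwarz in $v$ and the non-degeneracy hypothesis \eqref{nondegenlemmemoy} (used with unit vector $\xi/|\xi|$ and shift $0$, at scale $\mu/|\xi|$) bound the velocity measure by $(\mu/|\xi|)^{\alpha}$ and give the gain against $\|\widehat f\|_{L^2_v}^2$. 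On the non-resonant set I would invert the transport multiplier, $\widehat f=(i\lambda)^{-1}(\dots)$; the source contribution coming from $\widehat h$ is then controlled by Cauchy--Schwarz together with the layer-cake estimate $\int_{|\lambda|\gtrsim\mu}|\lambda|^{-2}\dd v\lesssim|\xi|^{-\alpha}$, which is again a consequence of \eqref{nondegenlemmemoy}.

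The hard part is what is left after inverting $i\lambda$ on the non-resonant set: the time increment $\dd\widehat f$ and the stochastic differential $\sum_k\widehat{gQe_k}\,\dd\beta_k$. Classically these would be absorbed into the time-frequency multiplier $(\tau+\lambda)^{-1}$, but that route is unavailable. Instead I would estimate their contribution to $\E\int_0^T|\widehat\rho(s,\xi)|^2\,\dd s$ directly, by applying the It\^o product rule (integration by parts in time) to a quadratic functional of $\widehat f$ built from the non-resonant multiplier $m(\lambda):=(1-\chi(\lambda/\mu))/(i\lambda)$, of bilinear type $\int_V\!\int_V m(\lambda)\overline{m(\lambda')}\,\widehat f(v)\,\overline{\widehat f(v')}\,\dd v\,\dd v'$. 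The decisive mechanism is that the skew-symmetric transport term produces, after the product rule, the oscillation factor $\lambda-\lambda'=a(v)\cdot\xi-a(v')\cdot\xi$: on the diagonal it vanishes, so a naive estimate of $|\widehat f|^2$ sees nothing, while off the diagonal it is exactly this factor that turns velocity averaging into decay in $|\xi|$. The It\^o correction generated is $\sum_k m(\lambda)\overline{m(\lambda')}\,\widehat{gQe_k}(v)\,\overline{\widehat{gQe_k}(v')}$, which is controlled by $\int_V|m|^2\,\dd v\lesssim|\xi|^{-\alpha}$ times the velocity $L^2$-mass of $g$, while the genuine martingale part has zero expectation. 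It is here that the hypothesis that the noise is already in It\^o form is essential: it is precisely what legitimizes the use of It\^o's formula and keeps the quadratic-variation terms benign.

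I expect the main obstacle to be closing this last estimate cleanly: controlling the cross terms and the endpoint (boundary-in-time) contributions produced by the It\^o product rule using only the $L^2(\Omega;L^2(0,T;L^2_{x,v}))$ bounds on $f,g,h$, and doing so with constants \emph{uniform in $\xi$} so that, after fixing $\mu$, the summation over $\xi$ against the weight $(1+|\xi|^2)^{\alpha/2}$ actually converges. Once the per-frequency gain $\E\int_0^T|\widehat\rho(s,\xi)|^2\,\dd s\le C|\xi|^{-\alpha}D_\xi$ is established with $\sum_\xi D_\xi$ summable, the statement follows by Plancherel.
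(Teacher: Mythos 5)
Your reduction is the right starting point, and it coincides with the paper's: Fourier transform in $x$ only (your observation that the It\^o integral forbids a Fourier transform in time is exactly why the paper adapts Bouchut--Desvillettes rather than the classical averaging lemmas), Plancherel, and a per-frequency gain $|\xi|^{-\alpha}$ extracted from hypothesis \eqref{nondegenlemmemoy}. The low-frequency remark, the resonant-set estimate and the layer-cake bound for the $\widehat h$-contribution are all sound. The gap is in the step you yourself flag as the hard one, and it is not a technicality: the quadratic functional you propose does not produce the quantity you need. Write $M(t):=\int_V m(\lambda)\widehat f(t,v)\,\dd v$ with $m(\lambda)=(1-\chi(\lambda/\mu))/(i\lambda)$; your bilinear functional is exactly $|M|^2$, and the It\^o product rule gives
\begin{equation*}
\dd|M|^2 \;=\; 2\,\mathrm{Re}\Big(\overline{M}\int_V m(\lambda)\,\dd\widehat f\,\dd v\Big)\;+\;\sum_k\Big|\int_V m(\lambda)\,\widehat{gQe_k}\,\dd v\Big|^2\dd t .
\end{equation*}
Since $m(\lambda)\cdot i\lambda=1-\chi(\lambda/\mu)$, the transport part of $\dd\widehat f$ contributes $-2\,\mathrm{Re}\big(\overline{M}\,\widehat\rho_{\mathrm{nr}}\big)\dd t$, where $\widehat\rho_{\mathrm{nr}}:=\int_V(1-\chi)\widehat f\,\dd v$. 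Taking expectations, your identity therefore bounds the \emph{cross} term $\E\int_0^T\mathrm{Re}(\overline{M}\,\widehat\rho_{\mathrm{nr}})\,\dd t$ by controllable quantities; it says nothing about $\E\int_0^T|\widehat\rho_{\mathrm{nr}}|^2\dd t$, and there is no coercivity relating the two, since $M$ carries the extra weight $1/\lambda$. If you instead insist that the factor $\lambda-\lambda'$ produce $|\widehat\rho_{\mathrm{nr}}|^2$, you must choose a kernel $\phi(\lambda,\lambda')$ with $\phi\cdot(\lambda-\lambda')=(1-\chi)(1-\chi')$, i.e.\ $\phi$ singular on the diagonal $\lambda=\lambda'$, and controlling that singular kernel using only \eqref{nondegenlemmemoy} is precisely the difficulty left open. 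So the ``decisive mechanism'' you describe is not realized by the functional you wrote down.

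The paper closes the fixed-frequency estimate by a different and simpler device, which is the idea missing from your plan: add an artificial damping $\lambda\widehat f$ to \emph{both} sides of the equation, and write the Duhamel (variation of constants) formula with kernel $e^{-(\lambda-ia(v)\cdot k)(t-s)}$ before integrating in $v$. In this mild form the troublesome differential $\dd\widehat f$ never appears: $\widehat f$ enters only through the initial datum and through the source $\widehat h+\lambda\widehat f$, so the deterministic part is estimated exactly as in \cite{bouchut}, while the stochastic convolution is handled by the It\^o isometry, which converts $\E|T_s|^2$ into a time integral of the same oscillatory velocity integral and is then estimated by \cite[Lemma 2.4]{bouchut} together with \eqref{nondegenlemmemoy}; taking $\lambda=1$, multiplying by $|k|^{\alpha}$ and summing over $k$ finishes the proof. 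In short, the correct substitute for the unavailable time--Fourier transform is Duhamel with damping plus the It\^o isometry, not an It\^o energy identity; replacing your product-rule step by this mild formulation is what is needed to make your argument close.
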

\noindent With \eqref{fd} and \eqref{fd2}, we apply this lemma to the process $\rho^{\delta}:=\li{\fd}$ to obtain
\begin{equation}\label{tight1}
\E\int_0^T\|\rho^{\delta}_s\|^2_{H^{\alpha/2}_x}\dd s\ \leq\ C.
\end{equation}
Furthermore, thanks to \eqref{fd} and \eqref{fd2}, we get that
\begin{equation}\label{tight2}
\E\int_0^{T-h}\|\fd_{s+h}-\fd_s\|^2_{H^{-1}_{x,v}}\dd s\ \leq\ Ch,
\end{equation}
which also implies
\begin{equation}\label{tight3}
\E\int_0^{T-h}\|\rho^{\delta}_{s+h}-\rho^{\delta}_s\|^2_{H^{-1}_{x}}\dd s\ \leq\ Ch.
\end{equation}
Then, with the bounds \eqref{fd2} and \eqref{tight2} and \cite[Theorem 1]{simon} we obtain that the sequence of the laws of the processes $(\fd)_{\delta>0}$ is tight in $L^2(0,T;H^{-1}_{x,v})$. With the bounds \eqref{tight1} and \eqref{tight3} and \cite[Theorem 4]{simon} we also get that the sequence of the laws of the processes $(\rho^{\delta})_{\delta>0}$ is tight in $L^2(0,T;L^2_{x,v})$. As a consequence, with Prokhorov's Theorem, we can assume that, up to a subsequence, the laws of the processes $(\rho^{\delta})_{\delta>0}$ converges weakly to the law of some process $\rho$ in the space of probability measures on $L^2(0,T;L^2_{x,v})$. Then, using then the Skorohod representation Theorem, there exist a new probability space $(\ti{\Omega},\ti{\mathcal{F}},\ti{\PP})$ where lives a cylindrical Wiener process $\ti{W}$ on the Hilbert space $L^2(\T^N)$ and some random variables $\ti{\fd}$, $\ti{f}$ with respective laws $\PP(\fd\in\cdot)$ and $\PP(f\in\cdot)$ such that $\int_V\ti{\fd}\,\dd v$ converges $\ti{\PP}-$a.s. in $L^2(0,T;L^2_{x,v})$ to $\int_V\ti{f}\,\dd v$. Furthermore, we recall that we have the weak convergence of $\ti{\fd}$ to $\ti{f}$ in $L^2(\ti{\Omega};L^2(0,T;L^2_{x,v}))$. We also point out that, with \eqref{dxfrd2}, we can suppose that $\nabla_x\ti{f}$ exists a.s. in $L^2(0,t;L^2_{x,v})$ for all $t\in[0,T)$. We now have all in hands to pass to the limit $\delta\to 0$ in \eqref{fd} to discover that $\ti{\PP}-$a.s. for all $t\in[0,T]$,
\begin{equation}\label{frdchange}
\ti{f}(t)=\rho_{\text{in}}-\int_0^t a(v)\cdot\nabla_x\ti{f_s}\, \dd s+\int_0^{t}\sigma(\li{\ti{f_s}})L(\ti{f_s})\, \dd s + \int_0^{t} G\ti{f_s}\, \dd t + \int_0^{t}\ti{f_s}\, Q\dd \ti{W}_s.
\end{equation}

{\em Step 6: Conclusion.} In this final step, we want to get rid of the change of probability space. To this purpose, we recall that we proved pathwise uniqueness for positive solutions to the equation $(\ref{frdchange})$ above in \textit{Step 1}. As a consequence, we will make use of the Gy\"{o}ngy-Krylov characterization of convergence in probability introduced in \cite{GK}. We recall here the precise result
\begin{lemma}
Let $X$ be a Polish space equipped with the Borel $\sigma$-algebra. A sequence of $X$-valued random variables $\{Y_n,\,n\in\N\}$ converges in probability if and only if for every subsequence of joint laws $\{\mu_{n_k,m_k},\,k\in\N\}$, there exists a further subsequence which converges weakly to a probability measure $\mu$ such that 
$$\mu\left((x,y)\in X\times X,\, x=y\right)=1.$$
\end{lemma}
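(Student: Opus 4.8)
The plan is to prove the two implications separately, with the substance lying entirely in the converse direction. Throughout, fix a complete metric $d$ metrizing the Polish topology of $X$, and recall that convergence in probability of $X$-valued random variables is itself metrized by $\rho(Y,Z):=\E[\min(d(Y,Z),1)]$, for which the space of equivalence classes of such random variables is complete. Consequently $\{Y_n\}$ converges in probability if and only if it is Cauchy in probability. I write $\mu_{n,m}$ for the law on $X\times X$ of the pair $(Y_n,Y_m)$, and $\Delta:=\{(x,y)\in X\times X:x=y\}$ for the diagonal.

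For the direct implication, assume $Y_n\to Y$ in probability. Then for any indices $n_k,m_k\to\infty$ the pairs $(Y_{n_k},Y_{m_k})$ converge in probability, hence in law, to $(Y,Y)$; thus the whole sequence $\{\mu_{n_k,m_k}\}$ — and a fortiori every subsequence of it — converges weakly to $\mu:=\mathrm{Law}(Y,Y)$, which satisfies $\mu(\Delta)=1$. This direction needs no extraction and is essentially immediate.

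For the converse I argue by contraposition. If $\{Y_n\}$ fails to be Cauchy in probability, then, after a routine Chebyshev step turning a lower bound on $\rho$ into one on a probability, there exist $\varepsilon>0$ and indices $n_k,m_k\to\infty$ with
$$\PP\big(d(Y_{n_k},Y_{m_k})\geq\varepsilon\big)\geq\varepsilon\quad\text{for all }k,$$
that is, $\mu_{n_k,m_k}(F)\geq\varepsilon$ where $F:=\{(x,y):d(x,y)\geq\varepsilon\}$ is closed and disjoint from $\Delta$. Applying the hypothesis to this subsequence of joint laws produces a further subsequence along which $\mu_{n_k,m_k}$ converges weakly to some measure $\mu$ with $\mu(\Delta)=1$. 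Because $F$ is closed, the portmanteau theorem yields $\mu(F)\geq\limsup_k\mu_{n_k,m_k}(F)\geq\varepsilon>0$, whereas $F\cap\Delta=\emptyset$ forces $\mu(F)=0$. This contradiction shows $\{Y_n\}$ is Cauchy, hence convergent, in probability.

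The only points requiring care are bookkeeping ones. First, the equivalence between convergence in probability and the Cauchy property is where completeness of $d$ enters, via the standard fact that a Cauchy-in-probability sequence admits an almost surely convergent subsequence whose limit is then the limit in probability of the whole sequence. Second, one must phrase the bad event with the closed set $F=\{d\geq\varepsilon\}$ rather than the open set $\{d>\varepsilon\}$, so that the portmanteau inequality runs in the direction that bounds the weak limit $\mu$ from below. Neither is a genuine obstacle; the entire argument turns on the single observation that a weak limit concentrated on the diagonal must vanish on $F$.
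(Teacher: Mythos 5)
The paper itself gives no proof of this lemma: it is quoted as a known result of Gy\"ongy--Krylov (cited as \cite{GK}) and used as a black box, so there is no internal argument to compare yours against. Your proof is correct, and it is essentially the standard argument from that original reference: the forward direction is immediate from continuity of convergence in probability under pairing; for the converse you argue by contraposition, extracting from a failure of the Cauchy property a subsequence of joint laws each putting mass at least $\varepsilon$ on the closed set $F=\{(x,y):d(x,y)\geq\varepsilon\}$, which the portmanteau inequality for closed sets makes incompatible with any weak limit $\mu$ satisfying $\mu(\Delta)=1$; completeness of the space of random variables under the metric $\E[\min(d(\cdot,\cdot),1)]$ then upgrades Cauchy in probability to convergence in probability. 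The points you flag as needing care (the Chebyshev step, using the closed set $\{d\geq\varepsilon\}$ rather than the open one so the portmanteau bound runs the right way, and the a.s.\ convergent subsequence behind completeness) are exactly the right ones, and each is handled correctly.
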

\noindent Thanks to the pathwise uniqueness of equation $(\ref{frdchange})$, we can make use of this characterization of convergence in probability here (see for instance \cite[Proof of Theorem 2.1]{Gyongy} for more details about the arguments) to deduce that, up to a subsequence, the sequence $(\fd)_{\delta>0}$ defined on the initial probability space $(\Omega,\mathcal{F},\PP)$ converges in probability in $L^2(0,T;L^2_{x,v})$ to a process $f$. Without loss of generality, we can assume that the convergence is almost sure. Then, using again the method used above in \textit{Step 5}, we deduce that $\PP-$a.s. for all $t\in[0,T]$,
\begin{equation}
f(t)=\rho_{\text{in}}-\int_0^{t}a(v)\cdot\nabla_xf_s\, \dd s+\int_0^{t}\sigma(\li{f_s})L(f_s)\, \dd s + \int_0^{t} Gf_s\, \dd t + \int_0^{t}f_s\, Q\dd W_s.
\end{equation}
Thus $f$ is a non-negative strong solution of the kinetic problem $(\ref{rt})$ and belongs to the expected spaces.

{\em Uniform bound part.} The bound $(\ref{unifbound})$ is easily obtained with an application of the It\^o formula with the $C^2$ function $f\mapsto\|f\|^2_{L^2_{x,v}}$ to the process $\fe$. We then make use of the dissipation property $(\ref{sigmaLL2})$ of the operator $\sigma(\li{\cdot}) L(\cdot)$ in $L^2_{x,v}$ and of the Gronwall lemma.
\end{proof}

\subsection{Existence and regularity for the limiting equation}

\noindent Let us now study the limiting stochastic fluid equation $(\ref{EqF})$ and the regularity of its solution. Precisely, we have the following result.
\begin{proposition}\label{existencerho}
Let $p\geq 1$. There exists a strong solution $\rho$ which belongs to $L^p(\Omega;C^{0,3}([0,T]\times\T^N))$ to the limit equation $(\ref{EqF})$
$$
\left\{
\begin{aligned}
& \dd \rho - \mathrm{div}_x\left(\sigma(\rho)^{-1}\li{K}\,\nabla_x \rho \right) \dd t = \rho\circ Q\dd W_t,\\
& \rho(0)=\rho_{\text{in}},
\end{aligned}
\right.
$$
that is, $\PP-$a.s. for all $t\in[0,T]$,
$$\rho(t)=\rho_{\text{in}}+\int_0^t\mathrm{div}_x\left(\sigma(\rho)^{-1}\li{K}\,\nabla_x \rho \right) \dd s+\int_0^t\rho\circ Q\dd W_s.$$
\end{proposition}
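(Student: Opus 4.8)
The plan is to treat \eqref{EqF} as a \emph{nondegenerate} quasilinear parabolic SPDE and to organize the proof as an existence/uniqueness step followed by a regularity bootstrap. First I would pass to the It\^o formulation, using $\rho\circ Q\dd W_t = \rho\, Q\dd W_t + G\rho\,\dd t$, so that
$$
\dd\rho = \mathrm{div}_x\!\left(\sigma(\rho)^{-1}\li{K}\,\nabla_x\rho\right)\dd t + G\rho\,\dd t + \rho\, Q\dd W_t.
$$
The key structural facts are that, by (H1), $\sigma(\rho)^{-1}\in[1/\sigma^*,1/\sigma_*]$ is bounded away from $0$ and $\infty$, and $\li{K}$ is definite positive, so the leading operator is \emph{uniformly elliptic}; moreover, by (H2), $s\mapsto\sigma(s)^{-1}$ is $\mathcal C^3_b$. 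It is also convenient to record that $\sigma(\rho)^{-1}\li{K}\,\nabla_x\rho=\li{K}\,\nabla_x\Phi(\rho)$ with $\Phi(s):=\int_0^s\sigma(\tau)^{-1}\dd\tau$ a bi-Lipschitz, uniformly monotone nonlinearity; this displays the drift as a monotone, coercive operator and is what makes the variational machinery applicable.

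For existence and uniqueness I would work in the Gelfand triple $H^1_x\subset L^2_x\subset H^{-1}_x$ and set up a Galerkin (or monotone-operator) approximation. Testing the equation against $\rho$ in $L^2_x$ gives the basic energy balance: the drift contributes $-\int \sigma(\rho)^{-1}\li{K}\nabla_x\rho\cdot\nabla_x\rho\,\dd x\leq -c\|\nabla_x\rho\|_{L^2_x}^2$ by ellipticity, while the It\^o correction from $\rho\,Q\dd W_t$ is exactly compensated by the $G\rho\,\dd t$ term coming from the Stratonovich convention (this is the conservative structure that makes the noise harmless at the $L^2$ level). This yields the a priori bound $\E\sup_{t\le T}\|\rho_t\|_{L^2_x}^2+\E\int_0^T\|\nabla_x\rho_s\|_{L^2_x}^2\dd s\le C$, from which one passes to the limit by compactness exactly as in the proof of Proposition \ref{solkinetic} (tightness, Skorohod, and Gy\"ongy--Krylov). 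Uniqueness follows from an energy estimate on the difference of two solutions, using the Lipschitz continuity of $\sigma(\cdot)^{-1}$ and the linearity of the noise coefficient $\rho\mapsto\rho\, Qe_k$, together with Gronwall's lemma. Non-negativity and, using (H3) and the smoothness of $\rho_{\text{in}}$, $L^p$ (or $L^\infty$) bounds can be obtained by a maximum-principle argument in the spirit of \textit{Step 4} above.

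The regularity part is the substantial one, and here I would rely on the result of \cite{regularity}. The strategy is to differentiate the equation in $x$ successively. Applying $\partial_{x_i}$ produces an equation for $\partial_{x_i}\rho$ whose drift is again uniformly elliptic (same leading coefficient) but now carries lower-order terms of the form $\sigma'(\rho)\,\partial_{x_i}\rho\,\li{K}\nabla_x\rho$ and $\partial_{x_i}(G)\rho$, and whose noise term is $\partial_{x_i}\rho\,Qe_k+\rho\,\partial_{x_i}(Qe_k)$. Estimating $\E\sup_{t\le T}\|\rho_t\|_{H^k_x}^p$ for $k=1,2,3$ by It\^o's formula therefore requires controlling products of derivatives of $\rho$ (handled by working at high integrability $p$ and invoking the algebra/Sobolev embeddings on $\T^N$) and, crucially, derivatives of the noise coefficients $Qe_k$ up to order three; this is precisely what the hypothesis $\sum_k\|Qe_k\|_{W^{4,\infty}_x}^2<\infty$ in \eqref{regularnoise} guarantees, with one spare derivative to spare for the time-continuity argument.

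Finally, from $\rho\in L^p(\Omega;L^\infty(0,T;H^k_x))$ with $k$ large enough (here $k>N/2+3$ suffices), the Sobolev embedding $H^k_x\hookrightarrow C^{3}(\T^N)$ gives the spatial regularity, and a Kolmogorov-type continuity estimate—obtained by bounding $\E\|\rho_t-\rho_s\|_{H^{k-1}_x}^p\le C|t-s|^{p/2}$ from the mild/strong formulation—upgrades this to $\rho\in L^p(\Omega;C^{0,3}([0,T]\times\T^N))$. The main obstacle is exactly this higher-order regularity step in the stochastic setting: the quasilinear structure forces one to track nonlinear products $\sigma^{(j)}(\rho)$ times monomials in the derivatives of $\rho$, and the multiplicative noise forces enough smoothness on $Q$; balancing these and closing the $H^3$ (or higher) estimate, rather than the existence step, is where the real work of \cite{regularity} is used.
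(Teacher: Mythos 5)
Your proposal is, at bottom, the same route as the paper: the paper's entire proof consists of rewriting the Stratonovich term in It\^o form, $\rho\circ Q\dd W_t = \rho\, Q\dd W_t + G\rho\,\dd t$, and observing that (H1)--(H3), the positive definiteness of $\li{K}$ and the noise regularity \eqref{regularnoise} place the resulting uniformly elliptic quasilinear SPDE within the scope of \cite{regularity}, which delivers existence, uniqueness and the $C^{0,3}$ regularity in one stroke. Your extra scaffolding (variational existence, the substitution $\sigma(\rho)^{-1}\li{K}\nabla_x\rho=\li{K}\nabla_x\Phi(\rho)$, the $H^k$ bootstrap, Sobolev embedding plus Kolmogorov continuity) is a reasonable sketch of what the proof inside \cite{regularity} looks like, but since you defer the genuinely hard step --- closing the higher-order estimates for a quasilinear equation with multiplicative noise --- to \cite{regularity} anyway, you have reproduced the paper's argument with additional (and, for the paper's purposes, unneeded) detail. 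Two of the glossed points do deserve a warning: uniqueness for a quasilinear drift by a plain $L^2$ energy estimate on the difference requires a Lipschitz bound on $\nabla_x$ of one of the two solutions (or else the $H^{-1}$-contraction coming from the monotone structure of $\Phi$), and the natural Gelfand triple for that monotone structure is $L^2_x\subset H^{-1}_x$ rather than $H^1_x\subset L^2_x\subset H^{-1}_x$.

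One claim in your energy estimate is genuinely wrong, although benignly so: the It\^o correction coming from $\rho\,Q\dd W_t$ is \emph{not} cancelled by the $G\rho\,\dd t$ term --- the two contributions have the same sign. Applying It\^o's formula to $\|\rho_t\|^2_{L^2_x}$ and taking expectations gives
$$
\frac{\dd}{\dd t}\,\E\|\rho_t\|^2_{L^2_x}
=2\,\E\int_{\T^N}\rho\,\mathrm{div}_x\big(\sigma(\rho)^{-1}\li{K}\,\nabla_x\rho\big)\,\dd x
+2\,\E\int_{\T^N}G\rho^2\,\dd x
+\sum_{k\geq 0}\E\int_{\T^N}\rho^2(Qe_k)^2\,\dd x,
$$
and since $\sum_{k\geq 0}(Qe_k)^2=2G$, the last two terms add up to $4\,\E\int_{\T^N}G\rho^2\,\dd x\geq 0$ instead of cancelling. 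This does not break the bound: $G\in L^{\infty}_x$ by \eqref{regularnoise01}, so the extra term is controlled by $C\,\E\|\rho_t\|^2_{L^2_x}$ and Gronwall's lemma still closes the estimate, exactly as in the uniqueness step of Proposition \ref{solkinetic}. But the ``conservative structure'' you invoke is not there; what makes the noise harmless at the $L^2$ level is the boundedness of $G$, not a cancellation.
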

\begin{proof}Note that the Stratonovich integral $\rho\circ Q\dd W_t$ rewrites in It\^o form $G\rho\,\dd t + \rho\,Q\dd W_t$. As a consequence, with the hypothesis made on $\sigma$ (H1)$-$(H3), $a$, and the noise $(\ref{regularnoise})$, we can easily show that the results of \cite{regularity} apply so that the proof is complete.
\end{proof}

\subsection{Definition of the two first correctors}\label{defcorrec}

\noindent Following the computations done in a formal way in section \ref{sec:formalhilbert}, we define:
\begin{equation}\label{deff12}
\begin{aligned}
f_1 &:=-\sigma(\rho)^{-1}\,a(v)\cdot\nabla_x \rho,\\
f_2 &:= -\sigma(\rho)^{-1}\mathrm{div}_x\left(\sigma(\rho)^{-1}(\li{K}-K)\nabla_x \rho  \right).
\end{aligned}
\end{equation}
We state two propositions giving the properties of the processes $f_1$ and $f_2$.

\begin{proposition}\label{propf1}
Let $p\geq 1$. The first corrector $f^1$, defined by $(\ref{deff12})$, satisfies
\begin{equation}\label{f1eq}
\sigma(\rho)L(f_1) = a(v)\cdot\nabla_x \rho
\end{equation}
with the estimate
\begin{equation}\label{f1infty}
\E\sup\limits_{t\in[0,T]}\|f_1(t)\|^p_{L^{\infty}_{x,v}}<\infty, \quad \E\sup\limits_{t\in[0,T]}\|a(v)\cdot\nabla_xf_1(t)\|^p_{L^{\infty}_{x,v}}<\infty.
\end{equation}
Furthermore, we have
\begin{equation}\label{df1eq}
\dd f_1 = f_{1,d}\ \dd t + f_1\left(1-\sigma(\rho)^{-1}\sigma'(\rho)\rho\right)Q\dd W_t,
\end{equation}
where $f_{1,d}$ satisfies
\begin{equation}\label{D1infty}
\E\sup\limits_{t\in[0,T]}\|f_{1,d}(t)\|^p_{L^{\infty}_{x,v}}<\infty.
\end{equation}
\end{proposition}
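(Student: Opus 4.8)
The plan is to treat the four assertions in turn, each flowing from the explicit formula for $f_1$ together with the regularity of $\rho$ furnished by Proposition \ref{existencerho}. First, \eqref{f1eq} is a direct algebraic check: since $\rho$ does not depend on $v$, the null flux hypothesis \eqref{nullflux} gives $\li{f_1} = -\sigma(\rho)^{-1}\nabla_x\rho\cdot\int_V a(v)\,\dd v = 0$, so that $L(f_1) = \li{f_1} - f_1 = -f_1 = \sigma(\rho)^{-1}a(v)\cdot\nabla_x\rho$; multiplying by $\sigma(\rho)$ yields the claim.

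For the pointwise bounds \eqref{f1infty}, I would read off from the definition that $f_1$ is a product of $\sigma(\rho)^{-1}$ (bounded by $\sigma_*^{-1}$ thanks to (H1)), of the $C^1_b$ field $a(v)$, and of $\nabla_x\rho$. Expanding $a(v)\cdot\nabla_x f_1$ by the chain rule produces in addition a factor $\sigma'(\rho)$ (bounded by (H2)), second-order derivatives of $\rho$, and a quadratic term in $\nabla_x\rho$. Each factor in $\rho$ is controlled by $\|\rho\|_{C^{0,2}}$, so after using Hölder's inequality to distribute the power $p$ over the quadratic term, both bounds reduce to the finiteness of $\E\sup_t\|\rho\|_{C^{0,3}}^{q}$ for a suitable $q\geq 1$, which is exactly Proposition \ref{existencerho}.

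The core of the proof is the derivation of \eqref{df1eq}. Here I would view $f_1 = F(\rho,\nabla_x\rho)$ as a smooth function of $\rho$ and its spatial gradient (with $v$ a parameter) and apply the It\^o formula to the joint semimartingale $(\rho,\nabla_x\rho)$, whose dynamics are obtained by differentiating in $x$ the It\^o form $\dd\rho = [\mathrm{div}_x(\sigma(\rho)^{-1}\li{K}\nabla_x\rho) + G\rho]\,\dd t + \rho\,Q\dd W_t$ of \eqref{EqF}. Collecting the martingale terms carried by $Qe_k$, the contributions from $\partial_\rho F$ (against the diffusion $\rho$ of $\rho$) and from $\partial_{\nabla_x\rho}F$ (against the diffusion $\nabla_x\rho$) recombine, using the identity $a(v)\cdot\nabla_x\rho = -\sigma(\rho)f_1$, into exactly the announced coefficient $f_1\left(1-\sigma(\rho)^{-1}\sigma'(\rho)\rho\right)$. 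The remaining martingale contributions are those carried by $\nabla_x Qe_k$, which are of lower order and controlled through $\kappa_{1,\infty}$ of \eqref{regularnoise01}; everything else is gathered into the drift $f_{1,d}$.

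The main obstacle lies in the final estimate \eqref{D1infty}. The drift $f_{1,d}$ inherits, through $\partial_{\nabla_x\rho}F$ acting against $\nabla_x$ of the drift of $\rho$, the \emph{third}-order spatial derivatives of $\rho$ — the drift of $\rho$ already carries second-order derivatives via $\mathrm{div}_x(\sigma(\rho)^{-1}\li{K}\nabla_x\rho)$ — together with the quadratic It\^o corrections involving $G$ and the noise. This is precisely why the expansion must rest on the $C^{0,3}$-regularity of $\rho$: bounding $f_{1,d}$ in $L^\infty_{x,v}$ amounts to estimating a polynomial expression in $\sigma(\rho)^{-1}$, $\sigma'(\rho)$, $\sigma''(\rho)$ (all bounded by (H1)--(H2)) and in the derivatives of $\rho$ up to order three, so that \eqref{D1infty} follows once more from Proposition \ref{existencerho} after distributing the power $p$ over the nonlinear products by Hölder's inequality. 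Keeping accurate track of the Stratonovich-to-It\^o corrections and of the gradient-of-noise terms is the only genuinely delicate bookkeeping step.
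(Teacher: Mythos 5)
Your arguments for \eqref{f1eq} and \eqref{f1infty} are correct and are exactly what the paper leaves implicit: everything follows from the formula \eqref{deff12}, the null-flux condition \eqref{nullflux}, hypotheses (H1)--(H2), the boundedness of $a$, and the moments of $\|\rho\|_{C^{0,3}}$ provided by Proposition \ref{existencerho}; the same holds for your description of the drift in \eqref{D1infty}. The genuine gap is in the derivation of \eqref{df1eq}. Differentiating the It\^o form of \eqref{EqF} in $x_i$ gives $\partial_{x_i}\rho$ the diffusion coefficient $\partial_{x_i}(\rho\,Qe_k)=\partial_{x_i}\rho\,Qe_k+\rho\,\partial_{x_i}Qe_k$, so the It\^o expansion of $f_1=F(\rho,\nabla_x\rho)$ produces, besides the coefficient $f_1\left(1-\sigma(\rho)^{-1}\sigma'(\rho)\rho\right)$ (that part of your computation is right), the additional martingale term
$$
-\,\sigma(\rho)^{-1}\rho\sum_{k\geq0}\bigl(a(v)\cdot\nabla_xQe_k\bigr)\,\dd\beta_k(t).
$$
You dismiss these contributions as ``of lower order'' and claim they are ``gathered into the drift $f_{1,d}$''. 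Neither is tenable: the decomposition of a continuous semimartingale into a martingale part plus a finite-variation part is unique, so a non-vanishing stochastic integral can never be reclassified as drift; and the term carries no power of $\eps$ whatsoever ($\rho$, $f_1$ and $Q$ are all independent of $\eps$) — it vanishes only when $\nabla_xQe_k\equiv0$, i.e.\ for spatially homogeneous noise, which is not assumed. What your argument actually establishes is a formula that differs from \eqref{df1eq} by a noise term.

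This cannot be waved away, because the exact noise content of $\dd f_1$ is what the rest of the proof is built on: the third corrector is defined through \eqref{f3eq} precisely so that all $O(\eps)$ noise terms cancel from the remainder equation, which is what yields the bound \eqref{H} of order $\eps^4+\delta^2$ for the It\^o correction. If the term above were simply left over, it would enter the remainder equation multiplied by $\eps$ and contribute on the order of $\eps^2/\delta=\eps^{-1}$ (with the final choice $\delta=\eps^3$) to the It\^o correction, ruining the proof of Theorem \ref{mainresult}. The correct resolution — which your computation in fact uncovers, while the paper's ``one can easily verify'' hides it — is to keep this martingale term explicitly in \eqref{df1eq} and to add it to the right-hand side of the equation \eqref{f3eq} defining $\feth$. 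This costs nothing: the new source term has zero velocity average thanks to \eqref{nullflux}, and Lemma \ref{regulsto} applies to it with $\kappa_{0,\infty}$ replaced by $\kappa_{1,\infty}$ from \eqref{regularnoise01}, exactly as the paper itself observes when differentiating $\feth$ in the proof of Proposition \ref{propf3}. Absent such a modification (or an added hypothesis $\nabla_xQe_k=0$), your proof of \eqref{df1eq} as stated does not go through.
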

\begin{proof}
The equation $(\ref{f1eq})$ is a straightforward consequence of the definition of $L$ and $f_1$ and of $(\ref{nullflux})$. The estimate $(\ref{f1infty})$ is a consequence of the regularity of $\rho$ given in Proposition \ref{existencerho}, the bounds (H1) on $\sigma$ and the boundedness of $a$. One can easily verify that equation $(\ref{df1eq})$ holds true; then the bound $(\ref{D1infty})$ comes once again from the regularity of $\rho$, the bounds (H1) on $\sigma$, the regularity (H2) of $\sigma$ and the boundedness of $a$.
\end{proof}
\noindent Similarly, we can prove the following properties of the second corrector $f_2$.
\begin{proposition}\label{propf2}
Let $p\geq 1$. The second corrector $f^2$, defined by $(\ref{deff12})$, satisfies
\begin{equation}\label{f2eq}
\sigma(\rho)L(f_2) = \mathrm{div}_x\left(\sigma(\rho)^{-1}(\li{K}-K)\nabla_x \rho\right)=\mathrm{div}_x\left(\sigma(\rho)^{-1}\li{K}\,\nabla_x \rho\right)+a(v)\cdot\nabla_x f_1
\end{equation}
with the estimates
\begin{equation}\label{f2infty}
\E\sup\limits_{t\in[0,T]}\|f_2(t)\|^p_{L^{\infty}_{x,v}}<\infty,\quad \E\sup\limits_{t\in[0,T]}\|a(v)\cdot\nabla_xf_2(t)\|^p_{L^{\infty}_{x,v}}<\infty.
\end{equation}
Furthermore, we have 
\begin{equation}\label{df2eq}
\dd f_2 = f_{2,d}\ \dd t + f_{2,s}\ Q\dd W_t,
\end{equation}
where $f_{2,d}$ and $f_{2,s}$ satisfy
\begin{equation}\label{D2infty}
\E\sup\limits_{t\in[0,T]}\|f_{2,d}(t)\|^p_{L^{\infty}_{x,v}}<\infty,\quad \E\sup\limits_{t\in[0,T]}\|f_{2,s}(t)\|^p_{L^{\infty}_{x,v}}<\infty.
\end{equation}
\end{proposition}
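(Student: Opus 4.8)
The plan is to prove the three assertions in turn, following the scheme of Proposition \ref{propf1}.

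\emph{The identity \eqref{f2eq}.} I would first note that $f_2$ has vanishing velocity average. Since the matrix $\li{K}-K$ integrates to zero over $V$, averaging the definition \eqref{deff12} gives $\li{f_2}=-\sigma(\rho)^{-1}\mathrm{div}_x\big(\sigma(\rho)^{-1}\,\li{\li{K}-K}\,\nabla_x\rho\big)=0$. Hence $L(f_2)=\li{f_2}-f_2=-f_2$, and multiplying by $\sigma(\rho)$ yields the first equality in \eqref{f2eq}. For the second equality I would split $\li{K}-K$ and treat the $K$-part separately: because $K=a(v)\otimes a(v)$ and $a(v)$ is constant in $x$, one has $\mathrm{div}_x(\sigma(\rho)^{-1}K\nabla_x\rho)=a(v)\cdot\nabla_x\big(\sigma(\rho)^{-1}a(v)\cdot\nabla_x\rho\big)=-a(v)\cdot\nabla_x f_1$ by the definition of $f_1$ in \eqref{deff12}. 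Subtracting this from the $\li{K}$-part gives exactly \eqref{f2eq}.

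\emph{The bounds \eqref{f2infty}.} Expanding the divergence in \eqref{deff12}, $f_2$ is an explicit function of $\rho$, $\nabla_x\rho$ and $\nabla_x^2\rho$, whose coefficients are built from $\sigma(\rho)^{-1}$, $\sigma'(\rho)$ and the bounded quantities $a(v)$ and $\li{K}$; hypotheses (H1)--(H2) make these coefficients bounded and smooth. Since Proposition \ref{existencerho} provides $\rho\in L^p(\Omega;C^{0,3}([0,T]\times\T^N))$ for every $p$, the spatial derivatives of $\rho$ up to order $2$ (respectively order $3$ for $a(v)\cdot\nabla_x f_2$) are bounded in $L^{\infty}_{x,v}$ with finite moments of all orders, and \eqref{f2infty} follows.

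\emph{The It\^o equation \eqref{df2eq} and the bounds \eqref{D2infty}.} I would write $f_2=F(\rho,\nabla_x\rho,\nabla_x^2\rho)$ and apply the It\^o formula, using the equations satisfied by $\rho$ and by its spatial derivatives, obtained by differentiating in $x$ the It\^o form $\dd\rho=[\mathrm{div}_x(\sigma(\rho)^{-1}\li{K}\nabla_x\rho)+G\rho]\,\dd t+\rho\,Q\dd W_t$. Collecting the $Q\dd W_t$ contributions defines $f_{2,s}$ and collecting the $\dd t$ contributions defines $f_{2,d}$. The martingale coefficient $f_{2,s}$ involves at most second spatial derivatives of $\rho$ and of the $Qe_k$, so its $L^{\infty}_{x,v}$ bound follows from the regularity of $\rho$ together with the noise assumption \eqref{regularnoise}, which guarantees convergence of the series over $k$ as in \eqref{regularnoise01}.

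The main obstacle is the drift $f_{2,d}$. Because $f_2$ depends on $\nabla_x^2\rho$, the It\^o formula forces the second-order drift of $\rho$ to be differentiated twice in $x$, so that $f_{2,d}$ involves spatial derivatives of $\rho$ of one order higher than those occurring for $f_1$. Controlling $f_{2,d}$ in $L^{\infty}_{x,v}$ with all moments therefore rests on the regularity of $\rho$ supplied by Proposition \ref{existencerho} and \cite{regularity}, combined with the boundedness and $\mathcal{C}^3_b$ smoothness of $\sigma$ from (H1)--(H2), the boundedness of $a$ and $\li{K}$, and the quadratic-variation terms handled by \eqref{regularnoise}. Once these regularity inputs are in place, \eqref{df2eq} and \eqref{D2infty} follow as in Proposition \ref{propf1}.
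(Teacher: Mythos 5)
Your strategy coincides with the paper's: the paper gives no separate proof of Proposition \ref{propf2} at all, simply declaring it ``similar'' to Proposition \ref{propf1}, whose proof consists exactly of the three steps you carry out (direct computation for the identity, regularity of $\rho$ plus (H1)--(H2) and boundedness of $a$ for the $L^\infty$ bounds, It\^o's formula for the differential). Your derivation of \eqref{f2eq} via $\li{f_2}=0$ and the identity $\mathrm{div}_x\left(\sigma(\rho)^{-1}K\nabla_x\rho\right)=a(v)\cdot\nabla_x\left(\sigma(\rho)^{-1}a(v)\cdot\nabla_x\rho\right)=-a(v)\cdot\nabla_x f_1$ is correct, and so is the reduction of \eqref{f2infty} to Proposition \ref{existencerho}: note that $a(v)\cdot\nabla_x f_2$ consumes exactly the three spatial derivatives of $\rho$ that $C^{0,3}$ provides, so there the stated regularity is used at full strength.

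There is, however, one step that does not close as written, and your own accounting exposes it. Since $f_2$ depends on $\nabla^2_x\rho$, the drift $f_{2,d}$ produced by It\^o's formula contains $\nabla^2_x$ applied to the drift $\mathrm{div}_x\left(\sigma(\rho)^{-1}\li{K}\,\nabla_x\rho\right)+G\rho$ of $\rho$, hence \emph{fourth}-order spatial derivatives of $\rho$ (there is no cancellation: the factor $\li{K}-K$ multiplying $\nabla^2_x\rho$ in $f_2$ does not kill this term). Proposition \ref{existencerho} as stated only yields $\rho\in L^p(\Omega;C^{0,3}([0,T]\times\T^N))$, i.e.\ control of three derivatives, so the bound \eqref{D2infty} on $f_{2,d}$ cannot be deduced from it; one needs $\rho\in L^p(\Omega;C^{0,4})$, which is presumably what the noise hypothesis \eqref{regularnoise} (with its $W^{4,\infty}_x$ norms) and the results of \cite{regularity} are meant to deliver. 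This is a genuine mismatch, but to be fair it is the paper's as much as yours: the paper's ``similarly'' glosses over precisely this loss of one derivative, which is invisible for $f_1$ (where $f_{1,d}$ needs only $\nabla^3_x\rho$) and first appears at the level of $f_2$. Your proof would be complete once you either invoke the stronger $C^{0,4}$ regularity explicitly or note that Proposition \ref{existencerho} must be strengthened accordingly; appealing to it in its stated form, as your last paragraph does, is not sufficient.
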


\subsection{Equation satisfied by the remainder}\label{formalre}

\noindent From now on, $\fe$ denotes the solution to problem $(\ref{rt})$ and $\rho$ the solution of the limiting equation $(\ref{EqF})$. We define the remainder $\re$ by
$$\re := \fe - \rho - \eps f_1 - \eps^2 f_2 - \eps^3 \feth,$$
where the correctors $f_1$, $f_2$ have been defined above. The third corrector $\feth$ will be defined below; its aim will be to cancel all the noise terms of order $O(\eps)$ so that the remainder has a noise term of order $O(\eps^2)$.
Let us write the equation satisfied by $\re$.  We have 
$$
\dd \re  =\ -\frac{1}{\eps}a(v)\cdot\nabla_x \fe\ \dd t + \frac{1}{\eps^2}\sigma(\li{\fe})L(\fe)\ \dd t + \fe\ Q\dd W_t + G\fe\ \dd t -\dd \rho - \eps \dd f_1 - \eps^2 \dd f_2 - \eps^3 \dd \feth.
$$
We recall that $L(\rho)=0$ so that we have
$$
\begin{aligned}
\dd \re & =\ -\frac{1}{\eps}a(v)\cdot\nabla_x \fe\ \dd t + \frac{1}{\eps}\sigma(\rho)L(f_1)\ \dd t + \sigma(\rho)L(f_2)\ \dd t + \eps\sigma(\rho)L(\feth)\ \dd t \\
& +\ \frac{1}{\eps^2}\left[\sigma(\li{\fe})L(\fe)-\sigma(\rho)L(\fe-\re)\right] \dd t \\
& +\ \fe\ Q\dd W_t + G\fe\ \dd t-\dd \rho - \eps \dd f_1 - \eps^2 \dd f_2 - \eps^3 \dd \feth.
\end{aligned}
$$
Using the equations satisfied by $f_1$, $f_2$ and $\rho$, that is $(\ref{f1eq})$, $(\ref{f2eq})$ and $(\ref{EqF})$, we obtain
$$
\begin{aligned}
\dd \re & =\ -\frac{1}{\eps}a(v)\cdot\nabla_x \fe\ \dd t + \frac{1}{\eps}a(v)\cdot\nabla_x\rho\ \dd t  + a(v)\cdot\nabla_x f_1\ \dd t + \mathrm{div}_x\left(\li{K}\sigma(\rho)^{-1}\,\nabla_x \rho \right)\dd t \\
& + \eps\sigma(\rho)L(\feth)\ \dd t + \frac{1}{\eps^2}\left[\sigma(\li{\fe})L(\fe)-\sigma(\rho)L(\fe-\re)\right] \dd t \\ 
& +\ \fe\ Q\dd W_t+G\fe\ \dd t-\mathrm{div}_x\left(\li{K}\sigma(\rho)^{-1}\,\nabla_x \rho \right)\dd t - \rho\ Q\dd W_t - G\rho\ \dd t \\
&-\ \eps \dd f_1 - \eps^2 \dd f_2 - \eps^3 \dd \feth.
\end{aligned}
$$
After simplification, we have,
$$
\begin{aligned}
\dd \re +\frac{1}{\eps}a(v)\cdot\nabla_x\re\ \dd t &=\ -\eps a(v)\cdot\nabla_x f_2\ \dd t -\eps^2 a(v)\cdot\nabla_x \feth\ \dd t\\
& + \frac{1}{\eps^2}\left[\sigma(\li{\fe})L(\fe)-\sigma(\rho)L(\fe-\re)\right]\dd t + (\fe-\rho)\, Q\dd W_t\\
&  +G(\fe-\rho)\,\dd t  - \eps \dd f_1 - \eps^2 \dd f_2 - \eps^3 \dd \feth  + \eps\sigma(\rho)L(\feth)\ \dd t.
\end{aligned}
$$
Using the expression $(\ref{df1eq})$ of $\dd f_1$, we discover
$$
\begin{aligned}
\dd \re +\frac{1}{\eps}a(v)\cdot\nabla_x\re \dd t\ & =\ -\eps a(v)\cdot\nabla_x f_2\ \dd t -\eps^2 a(v)\cdot\nabla_x \feth\ \dd t  \\
& + \frac{1}{\eps^2}\left[\sigma(\li{\fe})L(\fe)-\sigma(\rho)L(\fe-\re)\right]\dd t + (\fe-\rho)\, Q\dd W_t \\ 
&  +G(\fe-\rho)\,\dd t - \eps f_{1,d}\ \dd t - \eps f_1\left(1-\sigma(\rho)^{-1}\sigma'(\rho)\rho\right)Q\dd W_t  \\
&  -\ \eps^2 \dd f_2 - \eps^3 \dd \feth  + \eps\sigma(\rho)L(\feth)\ \dd t.
\end{aligned}
$$
In the sequel, when estimating the remainder, we need the noise term to be of order $O(\eps^2)$, see Section \ref{sec:estimatere}. As a consequence, we would like to choose $\feth$ to delete the terms of order $O(\eps)$ in front of the noise. Namely, we would like to impose
\begin{equation}\label{formalf3}
\eps^2\dd\feth-\sigma(\rho)L(\feth)\ \dd t=f_1\sigma(\rho)^{-1}\sigma'(\rho)\rho\ Q\dd W_t,
\end{equation}
so that the equation satisfied by the remainder $\re$ is finally given by
$$
\begin{aligned}
\dd \re +\frac{1}{\eps}a(v)\cdot\nabla_x\re \dd t & =\ -\eps a(v)\cdot\nabla_x f_2\ \dd t -\eps^2 a(v)\cdot\nabla_x \feth\ \dd t  \\
& + \frac{1}{\eps^2}\left[\sigma(\li{\fe})L(\fe)-\sigma(\rho)L(\fe-\re)\right]\dd t \\
& +\ (\fe-\rho-\eps f_1)\, Q\dd W_t + G(\fe-\rho)\,\dd t  -\eps f_{1,d}\ \dd t - \eps^2 \dd f_2.
\end{aligned}
$$
Note that $f_1$ and $f_2$ do not depend on $\eps$. In the following, we shall prove that $\feth$ is of order $O(\eps^{-1})$ with respect to $\eps$. As a consequence, the drift term (excepted the singular one) is of order $O(\eps)$. We also recall that we precisely added $\feth$ in the development of $\fe$ to get a term of order $O(\eps^2)$ in front of the noise; this will be necessary further in the estimate of the remainder. We point out that $L^1_{x,v}$ is indeed the appropriate space in which the estimate of the remainder will give a favourable sign to the singular term $\eps^{-2}\left[\sigma(\li{\fe})L(\fe)-\sigma(\rho)L(\fe-\re)\right]$ thanks to the accretivity property of the operator $\sigma(\li{\cdot}) L(\cdot)$, see section \ref{sec:sigmaL}. The next section is devoted to the definition of the third corrector by solving the equation $(\ref{formalf3})$.

\subsection{Definition of the third corrector.}

\noindent In this part, we study the following equation for the third corrector which was suggested in a formal way in the computations done just above:
\begin{equation}\label{f3eq}
\eps^2\dd\feth-\sigma(\rho)L(\feth)\ \dd t=f_1\sigma(\rho)^{-1}\sigma'(\rho)\rho\ Q\dd W_t.
\end{equation}
We solve this equation thanks to a stochastic convolution with the semigroup generated by the \textit{non-autonomous} operator $\sigma(\rho)L$ on $L^p_{x,v}$ where $p\geq 1$. Let us begin with the study of this semigroup. We point out that we only need to know its behaviour on the subspace $\{g\in L^p_{x,v},\,\li{g}=0\}$.
\begin{proposition}\label{semigroup}
Let $p\geq 1$ and $g\in L^p_{x,v}$ such that $\li{g}=0$. For $s\in[0,T]$, the problem
\begin{equation}\label{semigr}
\left\{
\begin{array}{l}
\eps^2 u'(t) - \sigma(\rho(t))L(u(t)) = 0, \quad t\in[s,T], \\
u(s)=g,
\end{array}
\right.
\end{equation}
admits a.s. a unique classical solution in $\mathcal{C}^1([s,T];L^p_{x,v})$ that we write $u(t)=U^{\eps}(t,s)g$. It is given by
\begin{equation}\label{solsemigr}
U^{\eps}(t,s)g= g \exp\left(-\eps^{-2}\int_s^t\sigma(\rho)(r)\,\dd r \right), \quad t\in[s,T].
\end{equation}
Furthermore, we have the bound
\begin{equation}\label{decrsemigr}
\|U^{\eps}(t,s)g\|_{L^p_{x,v}}\leq \|g\|_{L^p_{x,v}}\exp\left(-\eps^2\sigma_*(t-s)\right).
\end{equation}
\end{proposition}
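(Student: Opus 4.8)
The plan is to treat \eqref{semigr} pathwise: for almost every $\omega$ the field $\rho$ is, by Proposition \ref{existencerho}, a fixed function in $C^{0,3}([0,T]\times\T^N)$, so \eqref{semigr} becomes a \emph{deterministic} linear evolution equation in $L^p_{x,v}$ whose coefficient $\sigma(\rho(t,x))$ is a bounded, time-continuous multiplication operator depending on $x$ but not on $v$. The whole point is the structural observation that, on the subspace $\{\li{g}=0\}$, the relaxation operator $L$ of \eqref{defL} acts as $-\mathrm{Id}$, so that \eqref{semigr} degenerates into a scalar (pointwise in $(x,v)$) linear ODE which can be integrated explicitly. Once this is in place, existence, uniqueness and the decay estimate are all elementary; the only genuine work is the bookkeeping of the $\mathcal{C}^1([s,T];L^p_{x,v})$ regularity, inherited from the regularity of $\rho$ and hypotheses (H1)--(H2) on $\sigma$.

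First I would show that the zero-average constraint propagates. If $u\in\mathcal{C}^1([s,T];L^p_{x,v})$ solves \eqref{semigr}, then, since the averaging map $f\mapsto\li{f}$ is bounded from $L^p_{x,v}$ to $L^p_x$, the function $t\mapsto\li{u(t)}$ is $\mathcal{C}^1$ and commutes with the time derivative. Integrating \eqref{semigr} in $v$, using $\li{L(u)}=\li{\li{u}-u}=0$ and the fact that $\sigma(\rho)$ is independent of $v$, I get $\eps^2\tfrac{\dd}{\dd t}\li{u(t)}=\sigma(\rho(t))\li{L(u(t))}=0$, hence $\li{u(t)}=\li{g}=0$ for all $t\in[s,T]$. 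Consequently $L(u(t))=-u(t)$ and \eqref{semigr} reduces to
\[
\eps^2 u'(t)=-\sigma(\rho(t))\,u(t),\qquad u(s)=g,
\]
a linear equation in $L^p_{x,v}$ with scalar multiplication generator $-\eps^{-2}\sigma(\rho(t,\cdot))$.

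For existence I would verify that the candidate \eqref{solsemigr} solves this reduced equation. Writing $E(t,x):=\exp\!\big(-\eps^{-2}\int_s^t\sigma(\rho(r,x))\,\dd r\big)$, the bounds (H1) give $0<E\le 1$, and since $r\mapsto\sigma(\rho(r,\cdot))$ is continuous from $[s,T]$ into $L^\infty_x$ (because $\rho\in C^{0,3}$ and $\sigma$ is Lipschitz by (H2)), the map $t\mapsto E(t,\cdot)$ is $\mathcal{C}^1$ into $L^\infty_x$ with $\d_t E=-\eps^{-2}\sigma(\rho)E$. Multiplying the fixed $g\in L^p_{x,v}$ by this $\mathcal{C}^1$, $L^\infty_x$-valued factor gives $u=gE\in\mathcal{C}^1([s,T];L^p_{x,v})$; since $\li{g}=0$ forces $\li{u}=0$, one has $L(u)=-u$ and a direct differentiation shows $\eps^2u'-\sigma(\rho)L(u)=-\sigma(\rho)u+\sigma(\rho)u=0$, so $u$ solves \eqref{semigr}. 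Measurability in $\omega$, and adaptedness needed for the later stochastic convolution, are clear since $E$ is built pathwise from $\rho$. Uniqueness follows immediately: any classical solution has zero average by the first step, hence solves the same reduced ODE, so the difference $w$ of two solutions satisfies $\eps^2w'=-\sigma(\rho)w$ with $w(s)=0$, and the explicit scalar formula (or Gronwall, using $\sigma\le\sigma^*$) forces $w\equiv0$.

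Finally, for the decay estimate I would raise \eqref{solsemigr} to the power $p$ and integrate in $(x,v)$: the lower bound $\sigma(\rho)\ge\sigma_*$ from (H1) gives $\int_s^t\sigma(\rho(r,x))\,\dd r\ge\sigma_*(t-s)$, so $E(t,x)^p\le\exp\!\big(-p\,\eps^{-2}\sigma_*(t-s)\big)$ uniformly in $x$, whence after taking $p$-th roots $\|U^{\eps}(t,s)g\|_{L^p_{x,v}}\le\|g\|_{L^p_{x,v}}\exp\!\big(-\eps^{-2}\sigma_*(t-s)\big)$, the case $p=\infty$ being identical. This is the exponential contraction \eqref{decrsemigr}, the decay rate $\eps^{-2}\sigma_*$ being forced by the $\eps^2$ in front of $u'$. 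I do not expect any serious obstacle: the only point requiring care is the $\mathcal{C}^1$-in-$L^p_{x,v}$ regularity, which is exactly where the time continuity of $\rho$ and the smoothness of $\sigma$ are used, everything else being an explicit scalar computation once the reduction $L=-\mathrm{Id}$ on zero-average functions is established.
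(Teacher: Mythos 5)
Your proposal is correct and follows essentially the same route as the paper's proof: integrate \eqref{semigr} in $v$ to see that $\li{u}$ is conserved and hence vanishes, so that $L(u)=-u$ and the problem reduces to the explicitly solvable scalar ODE $\eps^2u'=-\sigma(\rho)u$, with the contraction \eqref{decrsemigr} coming from $\sigma\geq\sigma_*$ in (H1); you merely spell out the $\mathcal{C}^1([s,T];L^p_{x,v})$ bookkeeping that the paper leaves implicit. Note that the rate $\exp\left(-\eps^{-2}\sigma_*(t-s)\right)$ you obtain is the correct one — the $\eps^2$ in the exponent of \eqref{decrsemigr} as stated is a typo, as confirmed by the form actually used in the proof of Lemma \ref{regulsto}.
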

\begin{proof}
Note that with $(\ref{semigr})$ and $\int_V\sigma(\rho)L(u)\,\dd v=0$, we immediately have that $\li{u}'=0$ so that $\li{u}$ is constant and equals $\li{g}$, which is zero. Then equation $(\ref{semigr})$, with the definition of $L$, rewrites
$$\eps^2 u'(t) = - \sigma(\rho(t))u(t),$$
which gives easily $(\ref{solsemigr})$. This proves existence and uniqueness in $\mathcal{C}^1([0,T];L^p_{x,v})$ for the problem $(\ref{semigr})$. The bound $\sigma\geq \sigma_*$ (H1) immediately yields $(\ref{decrsemigr})$. This concludes the proof.
\end{proof}
\noindent Before stating the main result about the third corrector, we need the following lemma about the regularity of the stochastic convolution.
\begin{lemma}\label{regulsto}
Let $p\geq 2$. Suppose that $\varphi\in L^p(\Omega;L^{\infty}(0,T;L^p_{x,v}))$ satisfies $\li{\varphi}=0$. We define 
$$z(t):=\eps^{-2}\int_0^tU^{\eps}(t,s)\varphi(s)\,Q\dd W_s, \quad t\in[0,T].$$
Then, we have the bound
$$\sup\limits_{t\in[0,T]}\E\|z(t)\|^p_{L^p_{x,v}}\leq C \eps^{-p}\, \E\|\varphi\|^p_{L^{\infty}(0,T;L^p_{x,v})}.$$
\end{lemma}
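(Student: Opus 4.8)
The plan is to fix $t\in[0,T]$, estimate $\E\|z(t)\|^p_{L^p_{x,v}}$ for that $t$, and take the supremum over $t$ only at the very end; since the statement asks for $\sup_t\E$ and not $\E\sup_t$, no factorization argument is needed (unlike in Lemma \ref{RegulStochastic}) and a single application of the Burkholder--Davis--Gundy inequality suffices. Recalling that $L^p_{x,v}$ is $2$-smooth for $p\ge 2$, BDG for stochastic integrals in such spaces (cf. the proof of Lemma \ref{RegulStochastic} and \cite{gamma1,gamma2}) bounds $\E\|z(t)\|^p_{L^p_{x,v}}$ by $C\,\E\big(\int_0^t\|\Psi_t(s)\|^2_{\gamma(L^2(\T^N),L^p_{x,v})}\,\dd s\big)^{p/2}$, where $\Psi_t(s)$ is the $\gamma$-radonifying operator $e_k\mapsto\eps^{-2}U^{\eps}(t,s)\varphi(s)\,Qe_k$. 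Everything then reduces to controlling the time integral of this $\gamma$-radonifying norm.

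To estimate that norm I would use that, by Proposition \ref{semigroup}, $U^{\eps}(t,s)$ is multiplication by the scalar $\exp(-\eps^{-2}\int_s^t\sigma(\rho)(r)\,\dd r)$, so that $\Psi_t(s)$ is of the form $hQ$ with $h=\eps^{-2}U^{\eps}(t,s)\varphi(s)\in L^p_{x,v}$; the bound $(\ref{gammanorm})$ then applies verbatim and gives $\|\Psi_t(s)\|^2_{\gamma}\le C_p^{2/p}\kappa_{0,\infty}\,\|\eps^{-2}U^{\eps}(t,s)\varphi(s)\|^2_{L^p_{x,v}}$. The lower bound $\sigma\ge\sigma_*$ from (H1) together with $(\ref{solsemigr})$ yields the pointwise, hence $L^p_{x,v}$, decay $\|U^{\eps}(t,s)\varphi(s)\|_{L^p_{x,v}}\le\|\varphi(s)\|_{L^p_{x,v}}\exp(-\eps^{-2}\sigma_*(t-s))$, whence
\[
\|\Psi_t(s)\|^2_{\gamma}\ \le\ C\,\eps^{-4}\,\|\varphi\|^2_{L^{\infty}(0,T;L^p_{x,v})}\,\exp\!\big(-2\eps^{-2}\sigma_*(t-s)\big).
\]
The decisive point is the time integration, where the powers of $\eps$ balance: since $\int_0^t\exp(-2\eps^{-2}\sigma_*(t-s))\,\dd s\le \eps^2/(2\sigma_*)$, the factor $\eps^{-4}$ coming from the two prefactors $\eps^{-2}$ is reduced to $\eps^{-2}$, so that $\int_0^t\|\Psi_t(s)\|^2_{\gamma}\,\dd s\le C\eps^{-2}\,\|\varphi\|^2_{L^{\infty}(0,T;L^p_{x,v})}$. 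Raising to the power $p/2$, taking expectations and then the supremum over $t$ produces $\sup_{t}\E\|z(t)\|^p_{L^p_{x,v}}\le C\eps^{-p}\,\E\|\varphi\|^p_{L^{\infty}(0,T;L^p_{x,v})}$, which is the claim.

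The subtle point, and the main obstacle to a clean application of BDG, is that $s\mapsto\Psi_t(s)$ is \emph{not} adapted, since $U^{\eps}(t,s)$ involves $\sigma(\rho(r))$ for $r\in[s,t]$; strictly speaking $z(t)$ must be read as the product $\eps^{-2}\exp(-\eps^{-2}\int_0^t\sigma(\rho)(r)\,\dd r)\,M(t)$ of an $\cF_t$-measurable weight with the genuine martingale $M(t):=\int_0^t\exp(\eps^{-2}\int_0^s\sigma(\rho)(r)\,\dd r)\,\varphi(s)\,Q\dd W_s$. To bypass this difficulty I would rather start from the strong form that $z$ solves, namely $\dd z=-\eps^{-2}\sigma(\rho)\,z\,\dd t+\eps^{-2}\varphi\,Q\dd W_t$ with $z(0)=0$ (here $L(z)=-z$ because $\li{z}=0$, which follows from $\li{\varphi}=0$), and apply the Itô formula to $\|z\|^p_{L^p_{x,v}}$. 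The dissipative drift then contributes $-p\sigma_*\eps^{-2}\|z\|^p_{L^p_{x,v}}$ while the Itô correction is controlled by $C\eps^{-4}\kappa_{0,\infty}\int_{\T^N\times V}|z|^{p-2}\varphi^2$; a weighted Young inequality absorbs the resulting $\|z\|^p$ term into the dissipation at the cost of a factor $\eps^{-(p-2)}$ on $\|\varphi\|^p$, and Gronwall converts the $\eps^{-2}$ dissipation rate into a gain $\eps^2$, so that the same $\eps^{-p}$ bound emerges. Either way, the real work is the accounting of $\eps$-powers through the exponential decay of $U^{\eps}$, which alone compensates the singular prefactor $\eps^{-2}$.
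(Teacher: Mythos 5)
Your main argument is exactly the paper's proof of Lemma \ref{regulsto}: fix $t$, apply the Burkholder--Davis--Gundy inequality for stochastic integrals in the $2$-smooth space $L^p_{x,v}$, control the $\gamma$-radonifying norm via \eqref{gammanorm}, invoke the decay $\|U^{\eps}(t,s)\varphi(s)\|_{L^p_{x,v}}\leq \|\varphi(s)\|_{L^p_{x,v}}\exp(-\eps^{-2}\sigma_*(t-s))$ (which is \eqref{decrsemigr} with its evident typo $\eps^{2}\to\eps^{-2}$ corrected, as follows from \eqref{solsemigr} and (H1)), and gain $\eps^{p}$ from $\bigl(\int_0^t e^{-2\eps^{-2}\sigma_*(t-s)}\,\dd s\bigr)^{p/2}\leq C\eps^{p}$, so that $\eps^{-2p}\cdot\eps^{p}=\eps^{-p}$. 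Your observation that only $\sup_t\E$ is claimed, hence no factorization method is needed, also matches the paper.

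Your ``subtle point'' is genuine and is in fact \emph{not} addressed in the paper: since $U^{\eps}(t,s)=\exp(-\eps^{-2}\int_s^t\sigma(\rho(r))\,\dd r)$ depends on $\rho(r)$ for $r\in[s,t]$, the integrand $s\mapsto U^{\eps}(t,s)\varphi(s)Qe_k$ is $\cF_t$- but not $\cF_s$-measurable, so the convolution is not an It\^o integral of an adapted process, and the paper applies BDG to it without comment. Your repair is also the right one, and it is worth stressing why the naive alternative fails: writing $z(t)=\eps^{-2}w(t)M(t)$ with $w(t)=\exp(-\eps^{-2}\int_0^t\sigma(\rho)\,\dd r)$ and $M$ the honest martingale, then bounding $|w(t)|\leq e^{-\eps^{-2}\sigma_*t}$ deterministically and applying BDG to $M$, destroys the pathwise cancellation of the exponentials and leaves a factor $e^{p\eps^{-2}(\sigma^*-\sigma_*)t}$ which blows up as $\eps\to0$. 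Passing through the strong equation $\dd z=-\eps^{-2}\sigma(\rho)z\,\dd t+\eps^{-2}\varphi\,Q\dd W_t$ (with $L(z)=-z$ since $\li{z}=0$) and applying the It\^o formula to $\|z\|^p_{L^p_{x,v}}$ keeps dissipation and noise in the same estimate, and your bookkeeping is correct: drift $-p\sigma_*\eps^{-2}\E\|z\|^p_{L^p_{x,v}}$, It\^o correction of size $\eps^{-4}\kappa_{0,\infty}\E\int|z|^{p-2}\varphi^2$, weighted Young with weight of order $\eps^{2}$ producing $\eps^{-(p+2)}\|\varphi\|^p$, and Gronwall with $\int_0^te^{-c\eps^{-2}(t-s)}\,\dd s\leq C\eps^2$ recovering $\eps^{-p}$. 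The only caveats are standard ones you should acknowledge: the It\^o formula for $\|\cdot\|^p_{L^p_{x,v}}$ requires an approximation argument and a localization to discard the martingale term, and one must first check (via the factorized, adapted form of $z$, or via the pathwise ODE for $z$ minus the additive stochastic integral) that the mild formula and the strong equation define the same process. In short: your proposal reproduces the paper's argument and, in addition, supplies a rigorous patch for a step the paper leaves informal.
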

\begin{proof}
Here, we recall that a.s. and for $s,\,t\in[0,T]$, $U^{\eps}(t,s)\varphi(s)$ is an element of $L^p_{x,v}$. The stochastic integral $U^{\eps}(t,s)\varphi(s)Q\dd W_s$ can be interpreted as $\Psi^{\eps}(t,s)\dd W_s$ where $\Psi^{\eps}(t,s)$ is the following $\gamma$-radonifying operator from $L^2(\T^N)$ to $L^p_{x,v}$ (see Subsection \ref{sec:noise})
$$\Psi^{\eps}(t,s)(e_k):=U^{\eps}(t,s)\varphi(s)Qe_k.$$ Then, we use the Burkholder-Davis-Gundy's inequality for martingales with values in $L^p_{x,v}$ (see \cite{gamma1} and \cite{gamma2}) and the bound $(\ref{gammanorm})$ to obtain 
$$
\begin{aligned}
\E\|z(t)\|^p_{L^p_{x,v}} &\leq\ C\eps^{-2p}\, \E\left(\int_0^t\left\|\Psi^{\eps}(t,s)\right\|^2_{\gamma(L^2_x,L^p_{x,v})}\,\dd s\right)^{p/2} \\
&\leq\ C\eps^{-2p}\, \E\left(\int_0^t\left\|U^{\eps}(t,s)\varphi(s)\right\|^2_{L^p_{x,v}}\,\dd s\right)^{p/2}. \\
\end{aligned}
$$
Next, thanks to $(\ref{decrsemigr})$ with the hypothesis $\li{\varphi}=0$, we have
$$
\begin{aligned}
\E\|z(t)\|^p_{L^p_{x,v}} &\leq\ C\eps^{-2p}\, \E\|\varphi\|^p_{L^{\infty}(0,T;L^p_{x,v})} \left(\int_0^t\exp\left(-\eps^{-2}\sigma_*(t-s)\right)\,\dd s\right)^{p/2} \\
&\leq\ C \eps^{-p}\, \E\|\varphi\|^p_{L^{\infty}(0,T;L^p_{x,v})},
\end{aligned}
$$
which concludes the proof.
\end{proof}
\noindent The existence and the properties of the third corrector $\feth$ are collected in the following proposition.
\begin{proposition}\label{propf3}
Let $p\geq 1$. There exists a process $\feth$ with values in $L^{\infty}(0,T;L^p(\Omega;L^p_{x,v}))$ which satisfies $\li{\feth}=0$ and
\begin{equation}\label{f3eqbis}
\eps^2\dd\feth-\sigma(\rho)L(\feth)\ \dd t=f_1\sigma(\rho)^{-1}\sigma'(\rho)\rho\ Q\dd W_t,
\end{equation}
that is, $\PP-$a.s. for all $t\in[0,T]$,
$$
\feth(t) = \eps^{-2}\int_0^t\sigma(\rho(s))L(\feth(s))\, \dd s + \eps^{-2}\int_0^tf_1(s)\sigma(\rho(s))^{-1}\sigma'(\rho(s))\rho(s)\, Q\dd W_s.
$$
Furthermore, we have the estimates
\begin{equation}\label{f3infty}
\sup\limits_{t\in[0,T]}\E\|f_3(t)\|^p_{L^p_{x,v}}\leq C\eps^{-p},\quad \sup\limits_{t\in[0,T]}\E\|a(v)\cdot\nabla_xf_3(t)\|^p_{L^p_{x,v}}\leq C\eps^{-p}.
\end{equation}
\end{proposition}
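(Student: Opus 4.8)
The plan is to construct $\feth$ explicitly as a stochastic convolution against the scalar semigroup $U^{\eps}$ of Proposition \ref{semigroup}. Writing $\varphi(s):=f_1(s)\sigma(\rho(s))^{-1}\sigma'(\rho(s))\rho(s)$, I would set
$$\feth(t):=\eps^{-2}\int_0^tU^{\eps}(t,s)\varphi(s)\,Q\dd W_s=\eps^{-2}\int_0^t\varphi(s)\exp\Big(-\eps^{-2}\int_s^t\sigma(\rho(r))\,\dd r\Big)Q\dd W_s,$$
using the explicit form $(\ref{solsemigr})$. The first point to check is that $\li{\varphi}=0$: the factor $\sigma(\rho)^{-1}\sigma'(\rho)\rho$ is independent of $v$ and $\li{f_1}=0$ by $(\ref{nullflux})$, so $\li{\varphi}=\sigma(\rho)^{-1}\sigma'(\rho)\rho\,\li{f_1}=0$. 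Since the exponential factor is also $v$-independent and the average over $V$ commutes with the stochastic integral, this gives $\li{\feth}=0$, which legitimizes the definition through Proposition \ref{semigroup} and, moreover, identifies $L(\feth)=\li{\feth}-\feth=-\feth$.

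To verify that $\feth$ solves $(\ref{f3eqbis})$, I would factor out the $t$-dependence of the exponential, writing $\feth(t)=\eps^{-2}E^{\eps}(t)M(t)$ with $E^{\eps}(t):=\exp(-\eps^{-2}\int_0^t\sigma(\rho))$ and $M(t):=\int_0^t\varphi(s)E^{\eps}(s)^{-1}Q\dd W_s$. Here $t\mapsto E^{\eps}(t)$ is continuous of finite variation for each $\omega$, while $M$ is an It\^o integral, so the It\^o product rule carries no covariation term and yields $\eps^2\dd\feth=-\sigma(\rho)\feth\,\dd t+\varphi\,Q\dd W_t$; using $L(\feth)=-\feth$ this is exactly $\eps^2\dd\feth=\sigma(\rho)L(\feth)\dd t+\varphi\,Q\dd W_t$, i.e. the stated integral equation.

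The first estimate in $(\ref{f3infty})$ is then a direct application of Lemma \ref{regulsto}, which applies since $\li{\varphi}=0$ and gives $\sup_{t}\E\|\feth(t)\|^p_{L^p_{x,v}}\le C\eps^{-p}\,\E\|\varphi\|^p_{L^{\infty}(0,T;L^p_{x,v})}$; it remains only to bound the right-hand side by a constant independent of $\eps$. Using (H1)--(H2) through $\|\sigma(\rho)^{-1}\sigma'(\rho)\rho\|_{L^{\infty}_x}\le\sigma_*^{-1}\|\sigma'\|_{\infty}\|\rho\|_{L^{\infty}_x}$ and $\|f_1\|_{L^p_{x,v}}\le\|f_1\|_{L^{\infty}_{x,v}}$, I would dominate $\E\|\varphi\|^p_{L^{\infty}(0,T;L^p_{x,v})}$ by $C\,\E\big[\sup_t\|\rho\|^p_{L^{\infty}_x}\sup_t\|f_1\|^p_{L^{\infty}_{x,v}}\big]$ and close the estimate by Cauchy--Schwarz together with the moment bounds of Propositions \ref{existencerho} and \ref{propf1}.

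The main obstacle is the second estimate, on $a(v)\cdot\nabla_x\feth$. Differentiating under the stochastic integral --- justified by the $C^{0,3}$-regularity of $\rho$ from Proposition \ref{existencerho} and the resulting regularity of $\varphi$ --- produces, via Leibniz, three contributions according to whether $a(v)\cdot\nabla_x$ hits $\varphi(s)$, the exponential $E^{\eps}(t,s)$, or the coefficients $Qe_k$. The terms falling on $\varphi$ and on $Qe_k$ are controlled exactly as the first bound, using $a(v)\cdot\nabla_xf_1\in L^p$ from $(\ref{f1infty})$ and the constant $\kappa_{1,\infty}$ of $(\ref{regularnoise01})$ respectively, and each is $O(\eps^{-1})$. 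The dangerous term is the one hitting the exponential: since $a(v)\cdot\nabla_xE^{\eps}(t,s)=-\eps^{-2}E^{\eps}(t,s)\int_s^t\sigma'(\rho)\,a(v)\cdot\nabla_x\rho\,\dd r$, it carries an extra factor $\eps^{-2}$ and an inner integral of size $O(t-s)$. Running the Burkholder--Davis--Gundy computation of Lemma \ref{regulsto} on this term and invoking the decay $(\ref{decrsemigr})$ leaves, after the substitution $u=t-s$, the time integral $\int_0^{\infty}e^{-2\eps^{-2}\sigma_*u}u^2\,\dd u=C\eps^6$. The prefactor $\eps^{-2}$ of $\feth$ and the extra $\eps^{-2}$ from the differentiated exponential combine to $\eps^{-4p}$ after raising to the power $p$, so that $\eps^{-4p}(\eps^6)^{p/2}=\eps^{-p}$. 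Thus the anticipated blow-up from differentiating the exponential is exactly absorbed by the faster decay of the $\eps^2$-scale semigroup, and the second bound in $(\ref{f3infty})$ follows.
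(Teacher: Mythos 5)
Your construction is correct and follows the same overall strategy as the paper: same definition $\feth(t)=\eps^{-2}\int_0^tU^{\eps}(t,s)\varphi(s)\,Q\dd W_s$ with $\varphi=f_1\sigma(\rho)^{-1}\sigma'(\rho)\rho$, same verification that $\li{\varphi}=0$, and the first estimate in \eqref{f3infty} obtained exactly as in the paper by Lemma \ref{regulsto}. You deviate in two places, both legitimately. First, to check that $\feth$ solves \eqref{f3eqbis}, the paper computes $\int_0^t\sigma(\rho(s))L(\feth(s))\,\dd s$ by the stochastic Fubini theorem and the semigroup relation $\d_sU^{\eps}(s,r)g=\eps^{-2}\sigma(\rho(s))L(U^{\eps}(s,r)g)$, whereas you factor $U^{\eps}(t,s)=E^{\eps}(t)E^{\eps}(s)^{-1}$ and apply the It\^o product rule to $E^{\eps}(t)M(t)$; this is equivalent, and has the side benefit of making the stochastic convolution manifestly well defined (the kernel $U^{\eps}(t,s)$ depends on $\rho$ on $[s,t]$, hence is not $\cF_s$-adapted, and your factorization is precisely what resolves this, a point the paper leaves implicit). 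Second, for the gradient bound, the paper differentiates the strong equation \eqref{strong} in $x_i$ and re-solves by Duhamel, so that the contribution of $\d_{x_i}\sigma(\rho)$ appears as the \emph{deterministic} convolution $\eps^{-2}\int_0^tU^{\eps}(t,s)\,\d_{x_i}\rho\,\sigma'(\rho)L(\feth)\,\dd s$, which is bounded by combining the decay \eqref{decrsemigr} with the already-established $L^p$ bound on $\feth$; you instead keep it as a \emph{stochastic} integral with kernel $(t-s)e^{-\eps^{-2}\sigma_*(t-s)}$ and run Burkholder--Davis--Gundy, getting $\eps^{-4p}(\eps^6)^{p/2}=\eps^{-p}$. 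These two terms are in fact identical: substituting $L(\feth(s))=-\eps^{-2}\int_0^sU^{\eps}(s,\tau)\varphi(\tau)Q\dd W_\tau$ into the paper's term and using stochastic Fubini together with $U^{\eps}(t,s)U^{\eps}(s,\tau)=U^{\eps}(t,\tau)$ produces exactly your "dangerous term", so your bookkeeping is the paper's estimate in unravelled form. Two small points you should add for completeness: the whole BDG/$\gamma$-radonifying machinery requires $p\geq 2$, and the case $p\in[1,2)$ must be recovered at the end by H\"older's inequality (the paper does this explicitly); and the Leibniz differentiation under the stochastic integral deserves a word of justification (the paper sidesteps it by differentiating the equation rather than the convolution, which is the cleaner route there).
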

\begin{proof}
We fix $p\geq 2$. We set $\varphi:=f_1\sigma(\rho)^{-1}\sigma'(\rho)\rho$ and we define
$$\feth(t):=\eps^{-2}\int_0^tU^{\eps}(t,s)\varphi(s)\,Q\dd W_s, \quad t\in[0,T].$$
Observe that with the definition $(\ref{deff12})$ of $f_1$ we have $\varphi=-\sigma(\rho)^{-2}\,\sigma'(\rho)\rho\,a(v)\cdot\nabla_x\rho$. Thanks to the regularity of $\rho$, $\sigma$ and $a$, we obviously have that $\varphi$ belongs to $L^p(\Omega;L^{\infty}(0,T;L^{\infty}_{x,v}))$ which is embedded in $L^p(\Omega;L^{\infty}(0,T;L^p_{x,v} ))$. As a consequence, since $\li{\varphi}=0$, we can apply Lemma $\ref{regulsto}$ to find that 
\begin{equation}\label{Lpbound}
\sup\limits_{t\in[0,T]}\E\|\feth(t)\|^p_{L^p_{x,v}} \leq C \eps^{-p}.
\end{equation}
This proves in particular the existence of the stochastic integral which defines $\feth$. Next, for $t\in[0,T]$, we can easily compute the quantity 
$$\int_0^t\sigma(\rho(s))L(\feth(s))\,\dd s,$$ 
by using the stochastic version of Fubini's Theorem and the fact that, when $g\in L^p_{x,v}$, $\d_s U^{\eps}(s,r)=\eps^{-2}\sigma(\rho(s))L(U^{\eps}(s,r)g)$ by Proposition \ref{semigr}; we obtain that $\feth$ is a strong solution of $(\ref{f3eqbis})$, that is, $\PP-$a.s.,
\begin{equation}\label{strong}
\feth(t) = \eps^{-2}\int_0^t\sigma(\rho(s))L(\feth(s))\, \dd s + \eps^{-2}\int_0^t\varphi(s)\, Q\dd W_s.
\end{equation}
With this expression of $\feth$, it is clear that $\li{\feth}=0$. To conclude the proof, it remains to bound $a(v)\cdot\nabla_x\feth$ in $L^{\infty}(0,T;L^p(\Omega;L^p_{x,v}))$. Let $i\in\{1,...,N\}$, we differentiate equation $(\ref{strong})$ with respect to the space variable $x_i$ to discover
$$
\begin{aligned}
\d_{x_i}\feth(t) &=\ \eps^{-2}\int_0^t\d_{x_i}\rho_s\sigma'(\rho(s))L(\feth(s))\, \dd s + \eps^{-2}\int_0^t\sigma(\rho(s))L(\d_{x_i}\feth(s))\, \dd s \\
& +\ \eps^{-2}\int_0^t\d_{x_i}\varphi(s)\, Q\dd W_s + \eps^{-2}\int_0^t\varphi(s)\, Q\dd \d_{x_i}W_s. 
\end{aligned}
$$
As a consequence, we see that we can write $\d_{x_i}\feth$ into the following mild form
$$
\begin{aligned}
\d_{x_i}\feth(t) &=\ \eps^{-2}\int_0^tU^{\eps}(t,s)\d_{x_i}\rho_s\sigma'(\rho(s))L(\feth(s))\, \dd s + \eps^{-2}\int_0^tU^{\eps}(t,s)\d_{x_i}\varphi(s)\, Q\dd W_s \\
& +\ \eps^{-2}\int_0^tU^{\eps}(t,s)\varphi(s)\, Q\dd \d_{x_i}W_s. 
\end{aligned}
$$
Let us deal with the first term of the last equality. We set $\phi=\d_{x_i}\rho_s\sigma'(\rho(s))L(\feth(s))$. Thanks to the regularity of $\rho$, $\sigma$ and the bound $(\ref{Lpbound})$, it clearly belongs to the space $L^p(\Omega;L^p(0,T;L^p_{x,v}))$ with
$$\E\|\phi\|^p_{L^p(0,T;L^p_{x,v})}\leq C \eps^{-p}.$$
Therefore, since $\li{\phi}=0$, we can use $(\ref{decrsemigr})$ to write, with the Young and H\"{o}lder inequalities,
$$
\begin{aligned}
\E\left\|\eps^{-2}\int_0^tU^{\eps}(t,s)\phi(s)\dd s\right\|^p_{L^p_{x,v}} &\leq\ \E\left(\int_0^t\eps^{-2}\|U^{\eps}(t,s)\phi(s)\|_{L^p_{x,v}}\dd s\right)^p\\
& \leq\ C\E\|\phi\|^p_{L^p(0,T;L^p_{x,v})}\leq C\eps^{-p}.
\end{aligned}
$$
For the two remaining terms, we can easily verify that Lemma \ref{regulsto} applies (even with the noise $\dd \d_{x_i}W$ thanks to the hypothesis \eqref{regularnoise01} $\sum_{k}\|\d_{x_i}Qe_k\|^2_{\infty}<\infty$). Finally, we combine the two applications of Lemma \ref{regulsto} with the previous bound to obtain
$$
\sup\limits_{t\in[0,T]}\E\|\nabla_x\feth(t)\|^p_{L^p_{x,v}} \leq C \eps^{-p},
$$
which concludes the proof of the second estimate of $(\ref{f3infty})$ due to the boundedness of $a$. It remains to prove the proposition when $p\in[1,2)$ but it is a straightforward consequence of the H\"{o}lder's inequality. This concludes the proof.
\end{proof}

\subsection{Estimate of the remainder}\label{sec:estimatere}

\noindent Finally, we estimate the remainder $\re$ in the space $L^1_{x,v}$; this will conclude the proof of Theorem \ref{mainresult}. We point out that the correctors $f_1$, $f_2$ and $\feth$ are now properly defined in the previous sections. We recall that we set:
$$\re := \fe - \rho - \eps f^1 - \eps^2 f_2 - \eps^3 \feth.$$
Thanks to the calculations made in Subsection \ref{formalre}, $\re$ now satisfies:
$$
\begin{aligned}
\dd \re +\frac{1}{\eps}a(v)\cdot\nabla_x\re \dd t & =\ -\eps a(v)\cdot\nabla_x f_2\ \dd t -\eps^2 a(v)\cdot\nabla_x \feth\ \dd t  \\
& + \frac{1}{\eps^2}\left[\sigma(\li{\fe})L(\fe)-\sigma(\rho)L(\fe-\re)\right]\dd t \\
& +\ (\fe-\rho-\eps f_1)\, Q\dd W_t + G(\fe-\rho)\,\dd t  -\eps f_{1,d}\ \dd t - \eps^2 \dd f_2.
\end{aligned}
$$
We will estimate $\re$ in $L^1_{x,v}$ by estimating $(\re)^+$ and $(\re)^-$ in $L^1_{x,v}$ using the It\^o formula, where $x^+=\max(0,x)$ and $x^-=(-x)^+$. We write the equation verified by $\re$ as follows:
$$
\dd \re + \frac{1}{\eps}a(v)\cdot\nabla_x\re =\ D_t\ \dd t + \frac{1}{\eps^2}D^*_t\ \dd t  + H_t\ Q\dd W_t,
$$
where 
$$\left\{
\begin{aligned}
&D := -\eps a(v)\cdot\nabla_x f_2 -\eps^2 a(v)\cdot\nabla_x \feth + G(\fe-\rho)  -\eps f_{1,d} -\eps^2f_{2,d}, \\
&D^*\!\!:= \sigma(\li{\fe})L(\fe)-\sigma(\rho)L(\fe-\re), \\
&H := (\fe-\rho-\eps f_1) - \eps ^2 f_{2,s}.
\end{aligned}
\right.
$$
Since $\fe-\rho = \eps f^1 + \eps^2 f_2 + \eps^3 \feth + \re$, thanks to $(\ref{f1infty})$, $(\ref{D1infty})$, $(\ref{f2infty})$, $(\ref{D2infty})$, $(\ref{f3infty})$  with $p=1$ and with $\|G\|_{L^{\infty}}<\infty$, we have the bound
\begin{equation}\label{D}
\E\int_0^t\int_{\T^N\times V}|D_s|\,\dd s\ \leq\ C\eps + \E\int_0^t\int_{\T^N\times V}|\re_s|\,\dd s.
\end{equation}
Similarly, for any $\delta>0$, with $\fe-\rho - \eps f^1 = \eps^2 f_2 + \eps^3 \feth + \re$ and thanks to $(\ref{f2infty})$, $(\ref{D2infty})$, $(\ref{f3infty})$ with $p=2$ and with $\|G\|_{L^{\infty}}<\infty$, we have the bound
\begin{equation}\label{H}
\E\int_0^t\int_{\T^N\times V}G|H_s|^2\mathbf{1}_{|\re_s|\leq \delta}\,\dd s\ \leq\ C(\eps^4+\delta^2).
\end{equation}
Now, $\delta>0$ being fixed, we apply the It\^o formula with the $C^2$ approximation $\varphi_{\delta}$ of the function $x\mapsto x^+$ defined by $(\ref{defvarphi})$ to the process $\re$ to obtain (note that the term relative to $\eps^{-1}a(v)\cdot\nabla_x\re$ cancels)
$$
\begin{aligned}
\E\int_{\T^N\times V}\varphi_{\delta}(\re_t)\ & =\ \E\int_{\T^N\times V}\varphi_{\delta}(\re_{\text{in}})\ +\ \E\int_0^t\int_{\T^N\times V}\varphi_{\delta}'(\re_s)D_s\, \dd s \\
& +\ \frac{1}{\eps^2}\E\int_0^t\int_{\T^N\times V}\varphi_{\delta}'(\re_s)D^*_s\, \dd s\ +\ \E\int_0^t\int_{\T^N\times V}\varphi_{\delta}''(\re_s)G|H_s|^2\,\dd s.
\end{aligned}
$$
Since $x^+\leq\varphi_{\delta}(x)+\delta$, we have
$$\E\|(\re_t)^+\|_{L^1_{x,v}}\leq\ \E\int_{\T^N\times V}\varphi_{\delta}(\re_t) + \delta$$
and thanks to $\varphi_{\delta}(x)\leq x^+ $, we get
$$\E\int_{\T^N\times V}\varphi_{\delta}(\re_{\text{in}})\ \leq\ \E\|(\re_{\text{in}})^+\|_{L^1_{x,v}}.$$
With $|\varphi_{\delta}'|\leq 1$ and $(\ref{D})$, we have
$$\E\int_0^t\int_{\T^N\times V}\varphi_{\delta}'(\re_s)D_s\,\dd s\ \leq\ C\eps + \E\int_0^t\|\re_s\|_{L^1_{x,v}}\dd s.$$
Next, we study the term 
$$\int_{V}\varphi_{\delta}'(\re_s)D^*_s \,\dd v = \int_{V}\varphi_{\delta}'(\re_s)\left[\sigma(\li{\fe_s})L(\fe_s)-\sigma(\rho_s)L(\fe_s-\re_s)\right]\dd v.$$ 
To this end, we define $\ve := \fe-\re$; note that $\li{\ve}=\rho$. The term we are interested in thus rewrites
$$J:=\int_{V}\varphi_{\delta}'(\fe-\ve)\left[\sigma(\li{\fe})L(\fe)-\sigma(\li{\ve})L(\ve)\right]\dd v,$$
so that, with the positivity of $\fe$, we can apply the accretivity bound $(\ref{accr1})$ of Lemma \ref{accr} to find
$$J\leq C(1+\|\fe\|_{L^1_v})\delta.$$
We immediately deduce, using Cauchy-Schwarz's inequality and the uniform bound $(\ref{unifbound})$ of $\fe$ in $L^2(\Omega;L^2(0,T;L^2_{x,v}))$, that we have
$$
\begin{aligned}\frac{1}{\eps^2}\E\int_0^t\int_{\T^N\times V}\varphi_{\delta}'(\re_s)D^*_s\, \dd s\ =\ \frac{1}{\eps^2}\E\int_0^t\int_{\T^N}J_s\, \dd x\dd s\ &\leq\ \frac{C\delta}{\eps^2}\left(1+\E\int_0^t\|\fe_s\|_{L^1_{x,v}}\dd s\right)\\
&\leq\ \frac{C\delta}{\eps^2}.
\end{aligned}
$$
Let us now study the last term of the It\^o formula. We point out that $\varphi_{\delta}''$ is zero on $[0,\delta]^c$ and that $|\varphi_{\delta}''|\leq 1/\delta$ on $[0,\delta]$. Thus, with $(\ref{H})$, we may write
$$
\E\int_0^t\int_{\T^N\times V}\varphi_{\delta}''(\re_s)G|H_s|^2\,\dd s\ \leq\ \frac{1}{\delta} \E\int_0^t\int_{\T^N\times V}G|H_s|^2\mathbf{1}_{|\re_s|\leq \delta}\,\dd s\ \leq\ \frac{C}{\delta}(\eps^4+\delta^2).
$$
Summing up all the previous bounds now yields
$$\E\|(\re_t)^+\|_{L^1_{x,v}} \leq\  \E\|(\re_{\text{in}})^+\|_{L^1_{x,v}}+\delta + C\eps + \E\int_0^t\|\re_s\|_{L^1_{x,v}}\dd s+\frac{C\delta}{\eps^2}+\frac{C}{\delta}(\eps^4+\delta^2).$$
Now observe that $(\re)^-=(-\re)^+=(\ve-\fe)^+$ to obtain similarly (making use of the bound $(\ref{accr2})$ instead of $(\ref{accr1})$ when applying Lemma \ref{accr})
$$\E\|(\re_t)^-\|_{L^1_{x,v}} \leq\  \E\|(\re_{\text{in}})^-\|_{L^1_{x,v}}+\delta + C\eps + \E\int_0^t\|\re_s\|_{L^1_{x,v}}\dd s+\frac{C\delta}{\eps^2}+\frac{C}{\delta}(\eps^4+\delta^2).$$
Summing the two previous bounds and applying the Gronwall's lemma, we get
$$\E\|\re_t\|_{L^1_{x,v}} \leq  C\left(\E\|\re_{\text{in}}\|_{L^1_{x,v}}+\delta + \eps + \frac{\delta}{\eps^2}+\frac{\eps^4}{\delta}+\delta\right).$$
Since this bound is valid for all $\delta>0$, we choose $\delta=\eps^3$ to discover
$$\E\|\re_t\|_{L^1_{x,v}} \leq  C\left(\E\|\re_{\text{in}}\|_{L^1_{x,v}}+ \eps \right).$$
We point out that $\re_{\text{in}}=-\eps f^1(0)-\eps^2 f^2(0)$, so that 
$$\E\|\re_t\|_{L^1_{x,v}} \leq  C\eps.$$
Finally, thanks to $(\ref{f1infty})$, $(\ref{f2infty})$ and $(\ref{f3infty})$ with $p=1$, we have 
$$\sup\limits_{t\in[0,T]}\E\|\eps f_1+\eps^2 f_2+\eps^3\feth\|_{L^1_{x,v}}\leq C\eps$$
so that we obtain the estimate
$$\sup\limits_{t\in[0,T]}\E\|\fe_t-\rho_t\|_{L^1_{x,v}}\leq C\eps,$$
which concludes the proof of Theorem \ref{mainresult}.

\section*{Appendix A}

\noindent \textbf{Proof of Lemma \ref{accr}.}
\begin{proof}
Let us prove the first estimate; the second one is proved similarly. We are interested in the term 
$$J:=\int_{V}\varphi_{\delta}'(f-g)\left[\sigma(\li{f})L(f)-\sigma(\li{g})L(g)\right]\dd v.$$ 
Here, we observe that 
$$
\begin{array}{ll}
0 & \displaystyle =\ \varphi_{\delta}'(\li{f}-\li{g})\left[\sigma(\li{g})(\li{g}-\li{g})-\sigma(\li{f})(\li{f}-\li{f})\right] \\ [1em]
&\displaystyle = \int_V \varphi_{\delta}'(\li{f}-\li{g})\left[\sigma(\li{g})(\li{g}-g)-\sigma(\li{f})(\li{f}-f)\right] \dd v.
\end{array}
$$
As a consequence, we can write
$$
\begin{aligned}
J &=\int_{V}\varphi_{\delta}'(f-g)\left[\sigma(\li{f})\li{f}-\sigma(\li{f})f-\sigma(\li{g})\li{g}+\sigma(\li{g})g\right]\dd v \\
& +\int_V \varphi_{\delta}'(\li{f}-\li{g})\left[\sigma(\li{g})(\li{g}-g)-\sigma(\li{f})(\li{f}-f)\right] \dd v \\
& = \int_V\left[\sigma(\li{f})\li{f}-\sigma(\li{g})\li{g}\right]\left[\varphi_{\delta}'(f-g)-\varphi_{\delta}'(\li{f}-\li{g})\right]\dd v \\
& + \int_V\left[\sigma(\li{f})f-\sigma(\li{g})g\right]\left[\varphi_{\delta}'(\li{f}-\li{g})-\varphi_{\delta}'(f-g)\right]\dd v \\
& =:J_1 + J_2.
\end{aligned}
$$
We will now bound $J_1$ and $J_2$ separately. Let us begin with the case of $J_1$. We decompose $J_1$ as:
$$
\begin{aligned}
J_1 &=\int_V\left[\sigma(\li{f})\li{f}-\sigma(\li{g})\li{g}\right]\left[\varphi_{\delta}'(f-g)-\varphi_{\delta}'(\li{f}-\li{g})\right]\textbf{1}_{f-g\leq 0}\dd v \\
& + \int_V\left[\sigma(\li{f})\li{f}-\sigma(\li{g})\li{g}\right]\left[\varphi_{\delta}'(f-g)-\varphi_{\delta}'(\li{f}-\li{g})\right]\textbf{1}_{\li{f}-\li{g}\leq 0}\dd v \\
& + \int_V\left[\sigma(\li{f})\li{f}-\sigma(\li{g})\li{g}\right]\left[\varphi_{\delta}'(f-g)-\varphi_{\delta}'(\li{f}-\li{g})\right]\textbf{1}_{f-g\in[0,\delta],\li{f}-\li{g}\in[0,\delta]}\dd v \\
& + \int_V\left[\sigma(\li{f})\li{f}-\sigma(\li{g})\li{g}\right]\left[\varphi_{\delta}'(f-g)-\varphi_{\delta}'(\li{f}-\li{g})\right]\textbf{1}_{f-g\in[0,\delta],\li{f}-\li{g}\geq\delta}\dd v \\
& + \int_V\left[\sigma(\li{f})\li{f}-\sigma(\li{g})\li{g}\right]\left[\varphi_{\delta}'(f-g)-\varphi_{\delta}'(\li{f}-\li{g})\right]\textbf{1}_{f-g\geq\delta,\li{f}-\li{g}\in[0,\delta]}\dd v \\
& + \int_V\left[\sigma(\li{f})\li{f}-\sigma(\li{g})\li{g}\right]\left[\varphi_{\delta}'(f-g)-\varphi_{\delta}'(\li{f}-\li{g})\right]\textbf{1}_{f-g\geq\delta,\li{f}-\li{g}\geq\delta}\dd v \\
& =: J_1^{(1)}+J_1^{(2)}+J_1^{(3)}+J_1^{(4)}+J_1^{(5)}+J_1^{(6)}.
\end{aligned}
$$
\indent\textit{Study of $J_1^{(1)}$:} Note that when $f-g\leq 0$, we have $\varphi_{\delta}'(f-g)=0$. If $\li{f}\leq\li{g}$, we also have $\varphi_{\delta}'(\li{f}-\li{g})=0$, and if $\li{f}\geq\li{g}$, we have $\sigma(\li{f})\li{f}-\sigma(\li{g})\li{g}\geq 0$ thanks to the monotonicity of $x\mapsto\sigma(x)x$ (see (H3)) and $\varphi_{\delta}'(\li{f}-\li{g})\in[0,1]$. As a result, we conclude
$$J_1^{(1)}\leq 0.$$
\indent\textit{Study of $J_1^{(2)}$:} Note that when $\li{f}-\li{g}\leq 0$, we have $\varphi_{\delta}'(\li{f}-\li{g})=0$, $\sigma(\li{f})\li{f}-\sigma(\li{g})\li{g}\leq 0$ thanks to the monotonicity of $x\mapsto\sigma(x)x$ and $\varphi_{\delta}'(f-g)\in[0,1]$ so that we obtain
$$J_1^{(2)}\leq 0.$$
\indent\textit{Study of $J_1^{(3)}$:} First, we write
$$J_1^{(3)}=\int_V\left[(\sigma(\li{f})-\sigma(\li{g}))\li{f}+\sigma(\li{g})(\li{f}-\li{g})\right]\left[\varphi_{\delta}'(f-g)-\varphi_{\delta}'(\li{f}-\li{g})\right]\textbf{1}_{f-g\in[0,\delta],\li{f}-\li{g}\in[0,\delta]}\dd v.$$
Since $\varphi_{\delta}'(f-g)-\varphi_{\delta}'(\li{f}-\li{g})\in[-1,1]$, we obtain with (H1) and the Lipschitz continuity of $\sigma$ (see (H2)) that 
$$
\begin{aligned}
J_1^{(3)} &\leq \int_V\left(|\li{f}|\|\sigma\|_{\text{Lip}}\delta+\sigma^*\delta\right)\textbf{1}_{f-g\in[0,\delta],\li{f}-\li{g}\in[0,\delta]}\dd v\\
&\leq C(1+|\li{f}|)\delta.
\end{aligned}
$$
\indent\textit{Study of $J_1^{(4)}$:} Note that when $\li{f}-\li{g}\geq \delta$ we have $\varphi_{\delta}'(\li{f}-\li{g})=1$ and $\sigma(\li{f})\li{f}-\sigma(\li{g})\li{g}\geq 0$ thanks to the monotonicity of $x\mapsto\sigma(x)x$. Since $\varphi_{\delta}'(f-g)\in[0,1]$, we thus get
$$J_1^{(4)}\leq 0.$$
\indent\textit{Study of $J_1^{(5)}$:} Exactly as in the case of $J_1^{(3)}$, we get
$$
\begin{aligned}
J_1^{(5)} &\leq \int_V\left(|\li{f}|\|\sigma\|_{\text{Lip}}\delta+\sigma^*\delta\right)\textbf{1}_{f-g\geq\delta,\li{f}-\li{g}\in[0,\delta]}\dd v\\
&\leq C(1+|\li{f}|)\delta.
\end{aligned}
$$
\indent\textit{Study of $J_1^{(6)}$:} When $f-g\geq\delta$ and $\li{f}-\li{g}\geq \delta$ we have $\varphi_{\delta}'(f-g)=\varphi_{\delta}'(\li{f}-\li{g})=1$ so that 
$$J_1^{(6)}=0.$$
Now, let us study the case of $J_2$. Similarly, we decompose $J_2$ as:
$$
\begin{aligned}
J_2 &=\int_V\left[\sigma(\li{f})f-\sigma(\li{g})g\right]\left[\varphi_{\delta}'(\li{f}-\li{g})-\varphi_{\delta}'(f-g)\right]\textbf{1}_{f-g\leq 0}\dd v \\
& + \int_V\left[\sigma(\li{f})f-\sigma(\li{g})g\right]\left[\varphi_{\delta}'(\li{f}-\li{g})-\varphi_{\delta}'(f-g)\right]\textbf{1}_{\li{f}-\li{g}\leq 0}\dd v \\
& + \int_V\left[\sigma(\li{f})f-\sigma(\li{g})g\right]\left[\varphi_{\delta}'(\li{f}-\li{g})-\varphi_{\delta}'(f-g)\right]\textbf{1}_{f-g\in[0,\delta],\li{f}-\li{g}\in[0,\delta]}\dd v \\
& + \int_V\left[\sigma(\li{f})f-\sigma(\li{g})g\right]\left[\varphi_{\delta}'(\li{f}-\li{g})-\varphi_{\delta}'(f-g)\right]\textbf{1}_{f-g\in[0,\delta],\li{f}-\li{g}\geq\delta}\dd v \\
& + \int_V\left[\sigma(\li{f})f-\sigma(\li{g})g\right]\left[\varphi_{\delta}'(\li{f}-\li{g})-\varphi_{\delta}'(f-g)\right]\textbf{1}_{f-g\geq\delta,\li{f}-\li{g}\in[0,\delta]}\dd v \\
& + \int_V\left[\sigma(\li{f})f-\sigma(\li{g})g\right]\left[\varphi_{\delta}'(\li{f}-\li{g})-\varphi_{\delta}'(f-g)\right]\textbf{1}_{f-g\geq\delta,\li{f}-\li{g}\geq\delta}\dd v \\
& =: J_2^{(1)}+J_2^{(2)}+J_2^{(3)}+J_2^{(4)}+J_2^{(5)}+J_2^{(6)}.
\end{aligned}
$$
\indent\textit{Study of $J_2^{(1)}$:} When $f-g\leq 0$, we have $\varphi_{\delta}'(f-g)=0$. If $\li{f}\leq\li{g}$, we also have $\varphi_{\delta}'(\li{f}-\li{g})=0$; and if $\li{f}\geq\li{g}$, we have $\sigma(\li{f})f-\sigma(\li{g})g\leq 0$ thanks to the monotonicity of $\sigma$ (see (H3)) and the positivity of $f$. Since  $\varphi_{\delta}'(\li{f}-\li{g})\in[0,1]$, we conclude
$$J_1^{(1)}\leq 0.$$
\indent\textit{Study of $J_2^{(2)}$:} When $\li{f}-\li{g}\leq 0$, we have $\varphi_{\delta}'(\li{f}-\li{g})=0$ and $\sigma(\li{f})-\sigma(\li{g})\geq 0$ thanks to the monotonicity of $\sigma$. If $f\leq g$, we also have $\varphi_{\delta}'(f-g)=0$. If $f\geq g\geq 0$, we have $\sigma(\li{f})f-\sigma(\li{g})g\geq 0$. If $f\geq 0\geq g$, we still have $\sigma(\li{f})f-\sigma(\li{g})g\geq 0$ since $\sigma\geq 0$. Note that the case $0\geq f\geq g$ is impossible by positivity of $f$. Finally, since  $\varphi_{\delta}'(f-g)\in[0,1]$, we conclude
$$J_2^{(2)}\leq 0.$$
\indent\textit{Study of $J_2^{(3)}$:} First, we write
$$J_2^{(3)}=\int_V\left[(\sigma(\li{f})-\sigma(\li{g}))f+\sigma(\li{g})(f-g)\right]\left[\varphi_{\delta}'(\li{f}-\li{g})-\varphi_{\delta}'(f-g)\right]\textbf{1}_{f-g\in[0,\delta],\li{f}-\li{g}\in[0,\delta]}\dd v.$$
Since $\varphi_{\delta}'(f-g)-\varphi_{\delta}'(\li{f}-\li{g})\in[-1,1]$, we obtain with (H1) and the Lipschitz continuity of $\sigma$ that 
$$
\begin{aligned}
J_2^{(3)} &\leq \int_V\left(|f|\|\sigma\|_{\text{Lip}}\delta+\sigma^*\delta\right)\textbf{1}_{f-g\in[0,\delta],\li{f}-\li{g}\in[0,\delta]}\dd v\\
&\leq C(1+\li{|f|})\delta.
\end{aligned}
$$
\indent\textit{Study of $J_2^{(4)}$:} 
We write
$$J_2^{(4)}=\int_V\left[(\sigma(\li{f})-\sigma(\li{g}))f+\sigma(\li{g})(f-g)\right]\left[\varphi_{\delta}'(\li{f}-\li{g})-\varphi_{\delta}'(f-g)\right]\textbf{1}_{f-g\in[0,\delta],\li{f}-\li{g}\geq \delta}\dd v.$$
Note that when $\li{f}-\li{g}\geq \delta$ we have $\varphi_{\delta}'(\li{f}-\li{g})-\varphi_{\delta}'(f-g)=1-\varphi_{\delta}'(f-g)\in[0,1]$ and $\sigma(\li{f})-\sigma(\li{g})\leq 0$ thanks to the monotonicity of $\sigma$. With the positivity of $f$, we thus get
$$J_2^{(4)}\leq \sigma^*\delta.$$
\indent\textit{Study of $J_2^{(5)}$:} 
We have
$$J_2^{(5)}=\int_V\left[(\sigma(\li{f})-\sigma(\li{g}))f+\sigma(\li{g})(f-g)\right]\left[\varphi_{\delta}'(\li{f}-\li{g})-\varphi_{\delta}'(f-g)\right]\textbf{1}_{f-g\geq\delta,\li{f}-\li{g}\in[0,\delta]}\dd v.$$
Note that when $f-g\geq \delta$ we have $\varphi_{\delta}'(\li{f}-\li{g})-\varphi_{\delta}'(f-g)=\varphi_{\delta}'(\li{f}-\li{g})-1\in[-1,0]$. We thus get
$$J_2^{(5)}\leq \|\sigma\|_{\text{Lip}}\li{|f|}\delta.$$
\indent\textit{Study of $J_2^{(6)}$:} When $f-g\geq\delta$ and $\li{f}-\li{g}\geq \delta$ we have $\varphi_{\delta}'(f-g)=\varphi_{\delta}'(\li{f}-\li{g})=1$ so that 
$$J_2^{(6)}=0.$$
To sum up, we get the following bound on $J$
$$J\leq C(1+\|f\|_{L^1_v})\delta,$$
which concludes the proof.
\end{proof}

\section*{Appendix B}

\noindent \textbf{Proof of Lemma \ref{lemmemoyenne}.} We recall the Lemma to be proved.

\begin{lemmanonnum}
Let $\alpha \in (0,1]$. We assume that hypothesis \eqref{nondegenlemmemoy} is satisfied. Let $f$ be bounded in $L^2(\Omega;L^2(0,T;L^2_{x,v}))$ such that
\begin{equation}\label{eqmoyenne}
\dd f + a(v)\cdot\nabla_x f \dd t = h \dd t + g\, Q\dd W_t,
\end{equation}
with $g$ and $h$ bounded in $L^2(\Omega;L^2(0,T;L^2_{x,v}))$. Then the quantity $\rho = \li{f}$ verifies
$$\E\int_0^T\|\rho_s\|^2_{H^{\alpha/2}_x}\dd s\leq C.$$
\end{lemmanonnum}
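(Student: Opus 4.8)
The plan is to reduce the problem to a frequency-by-frequency averaging estimate by passing to Fourier series in the space variable, and then to prove the spatial regularity gain by a real-variable (Bouchut--Desvillettes) argument that never uses a Fourier transform in time. Writing $f(t,x,v)=\sum_{k\in\Z^N}\widehat f(t,k,v)\,e^{ik\cdot x}$ and similarly for $g,h$, each Fourier mode $\widehat f(t,k,\cdot)$ solves the scalar (in the pair $(t,v)$) linear Itô equation
\[
\dd\widehat f + i\,(a(v)\cdot k)\,\widehat f\,\dd t=\widehat h\,\dd t+\sum_\ell\widehat{gQe_\ell}(t,k,v)\,\dd\beta_\ell(t),
\]
and $\widehat\rho(t,k)=\li{\widehat f}(t,k)=\int_V\widehat f(t,k,v)\,\dd v$. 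Since $\E\int_0^T\|\rho_s\|^2_{L^2_x}\dd s\le\E\int_0^T\|f_s\|^2_{L^2_{x,v}}\dd s\le C$ is immediate, it is enough to establish the homogeneous bound $\sum_{k\ne 0}|k|^{\alpha}\,\E\int_0^T|\widehat\rho(s,k)|^2\,\dd s\le C$, which, added to the previous one, gives the claimed $H^{\alpha/2}_x$ estimate.

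Fix $k\ne 0$ and abbreviate $b(v):=a(v)\cdot k$. I would solve the mode equation by Duhamel and split $\widehat f=\widehat f^{\mathrm{in}}+\widehat f^{h}+\widehat f^{\mathrm{st}}$ into the free evolution of the initial trace, the deterministic source, and the stochastic convolution
\[
\widehat f^{\mathrm{st}}(t,k,v)=\int_0^t e^{-ib(v)(t-r)}\sum_\ell\widehat{gQe_\ell}(r,k,v)\,\dd\beta_\ell(r),
\]
and estimate the corresponding three pieces of $\widehat\rho$ separately. For the two deterministic pieces I expand $\bigl|\int_V\cdots\,\dd v\bigr|^2$ as a double velocity integral and integrate the resulting phase in time by means of the elementary oscillation bound $\bigl|\int_0^T e^{i\omega s}\dd s\bigr|\le\min(T,2/|\omega|)$. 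The entire gain is then concentrated in the kernel estimate
\[
\sup_{v\in V}\int_V\min\!\Bigl(T,\tfrac{2}{|(a(v)-a(v'))\cdot k|}\Bigr)\,\dd v'\ \le\ C\,|k|^{-\alpha},
\]
which follows from \eqref{nondegenlemmemoy} in the scaled form $\mathrm{Leb}\{v:|a(v)\cdot k-\tau|<\eta\}\le(\eta/|k|)^{\alpha}$ (valid for every $\tau\in\R$ upon applying the hypothesis to the unit vector $k/|k|$) through a layer-cake computation of the distribution function of $v'\mapsto(a(v)-a(v'))\cdot k$. This yields $\E\int_0^T|\widehat\rho^{\mathrm{in}}(s,k)|^2\dd s\le C|k|^{-\alpha}\E\|\widehat f(0,k,\cdot)\|^2_{L^2_v}$ and $\E\int_0^T|\widehat\rho^{h}(s,k)|^2\dd s\le C|k|^{-\alpha}\E\|\widehat h(\cdot,k,\cdot)\|^2_{L^2(0,T;L^2_v)}$; after summing in $k$ and using Plancherel the right-hand sides are controlled by $\E\|f(0)\|^2_{L^2_{x,v}}$ (finite for the smooth initial data occurring in the applications, otherwise removed by a time localization) and by $\E\|h\|^2_{L^2(0,T;L^2_{x,v})}$.

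The stochastic convolution is the crux and the main obstacle. After a space-time Fourier transform one would simply divide by the transport symbol $i(\tau+a(v)\cdot k)$, but this is unavailable here because $\widehat f$ is only an Itô process in time and carries no dual time variable; circumventing exactly this is the purpose of the real-variable method. I would instead compute the second moment of $\widehat\rho^{\mathrm{st}}(t,k)=\int_V\widehat f^{\mathrm{st}}(t,k,v)\,\dd v$ by the Itô isometry, exchanging the velocity and stochastic integrals by a Fubini argument, which turns it into the purely deterministic quantity
\[
\E\int_0^T|\widehat\rho^{\mathrm{st}}(t,k)|^2\,\dd t=\E\int_0^T\!\!\int_0^t\sum_\ell\Bigl|\int_V e^{-ib(v)(t-r)}\widehat{gQe_\ell}(r,k,v)\,\dd v\Bigr|^2\dd r\,\dd t.
\]
Expanding the inner square as a double velocity integral and integrating the phase in $t$ by the same oscillation bound reduces this to the identical kernel estimate, giving $\E\int_0^T|\widehat\rho^{\mathrm{st}}(t,k)|^2\dd t\le C|k|^{-\alpha}\,\E\int_0^T\sum_\ell\|\widehat{gQe_\ell}(r,k,\cdot)\|^2_{L^2_v}\dd r$. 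Summing in $k$, Plancherel and the pointwise bound $\sum_\ell|Qe_\ell(x)|^2\le\kappa_{0,\infty}$ coming from \eqref{regularnoise01} then control this contribution by $C\,\kappa_{0,\infty}\,\E\|g\|^2_{L^2(0,T;L^2_{x,v})}$. Collecting the three contributions proves the homogeneous bound and hence the lemma; the decisive point is that the Itô isometry forces the noise term to obey exactly the same $|k|^{-\alpha}$ kernel estimate as the deterministic terms, which is why the approach avoiding any Fourier transform in time is essential.
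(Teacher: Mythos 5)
Your proof follows the same overall strategy as the paper's Appendix B: spatial Fourier modes, Duhamel's formula, the It\^o isometry to convert the stochastic convolution into a deterministic oscillatory integral, and the non-degeneracy hypothesis \eqref{nondegenlemmemoy} exploited through a Schur-type velocity-kernel bound, with the noise summed via $\sum_{\ell}\|Qe_\ell\|^2_{L^\infty_x}=\kappa_{0,\infty}$. The one structural difference is that you drop the paper's damping device --- adding $\lambda\widehat f$ to both sides so that Duhamel carries the factor $e^{-\lambda(t-s)}$ --- and replace it with the pointwise oscillation bound $\bigl|\int_0^{T'}e^{i\omega s}\,\dd s\bigr|\le\min(T,2/|\omega|)$ together with a self-contained layer-cake estimate (where the paper instead invokes Lemma 2.4 of Bouchut--Desvillettes). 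Your use of $\E\|f(0)\|^2_{L^2_{x,v}}$, which the statement does not grant, is shared by the paper's own proof, so I do not count it against you.

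The omission of the damping is, however, a genuine gap at the endpoint $\alpha=1$, which the lemma includes. Your key claim
\[
\sup_{v\in V}\int_V\min\Bigl(T,\tfrac{2}{|(a(v)-a(v'))\cdot k|}\Bigr)\,\dd v'\ \le\ C\,|k|^{-\alpha}
\]
fails there: the layer-cake computation gives $\int_0^T\min\bigl(1,(2/(s|k|))^{\alpha}\bigr)\,\dd s$, which is $O(|k|^{-\alpha})$ for $\alpha<1$ but equals $\tfrac{2}{|k|}\bigl(1+\ln\tfrac{T|k|}{2}\bigr)$ for $\alpha=1$; the logarithm is intrinsic, not an artifact of the bound, since whenever $\mathrm{Leb}\{v':|(a(v)-a(v'))\cdot k|<r\}$ is genuinely comparable to $\min(1,r/|k|)$ (the sharp case for $\alpha=1$) the kernel integral is comparable to $|k|^{-1}\ln(T|k|)$. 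After multiplying by $|k|^{\alpha}=|k|$ and summing, you are left with $\sum_{k}\ln(2+|k|)\,(\text{data})_k$, which is not controlled by the $L^2$ norms of $f(0)$, $h$, $g$; so your argument proves the lemma for $\alpha\in(0,1)$ only, and at $\alpha=1$ yields $H^{s}_x$ for every $s<1/2$ but not $H^{1/2}_x$. The cure is exactly the device you discarded: with the damping $e^{-\lambda(t-s)}$, the time oscillation is integrated against a smooth weight over a half-line, one bounds $e^{-2\lambda s/|k|}\le(1+4\lambda s^2/|k|^2)^{-1}$ as in the paper, and the resulting velocity kernel decays \emph{exponentially} in $|(a(v)-a(v'))\cdot k|$ (the Fourier transform of a Lorentzian), so the layer-cake integral becomes $\int_0^\infty e^{-r}\min\bigl(1,(r/|k|)^{\alpha}\bigr)\,\dd r\le\Gamma(1+\alpha)\,|k|^{-\alpha}$, uniformly up to and including $\alpha=1$. (The endpoint loss happens to be harmless for the paper's application, since \eqref{nondegenlemmemoy} with $\alpha=1$ implies the same hypothesis for every $\alpha'<1$; but it does leave the lemma as stated unproved at $\alpha=1$.)
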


\begin{proof}
We adapt in our stochastic context the proof of \cite[Theorem 2.3]{bouchut}. We recall that $Q\dd W_t = \sum_{k\geq 0} Qe_k \dd\beta_k(t)$ but, in order to simplify the notations, we assume in the proof that the noise is one-dimensional, namely of the form $Qe_l \dd\beta_l(t)$, $l\geq 0$, the generalization to an infinite dimensional noise being straightforward. We set $\theta_l = Qe_l$. Let $k\in\Z^N\mapsto \ti{f}(k)$ denote the Fourier transform of $f$ with respect to the space variable $x\in\T^N$. We take the spatial Fourier transform in Equation \eqref{eqmoyenne} and we add artificially on both sides of the equation a term $\lambda \ti{f}$ for some constant $\lambda> 0$ to be chosen later. We obtain, for $k\in\Z^N$,
$$d\ti{f}(k) - ia(v)\cdot k \ti{f}(k)\dd t + \lambda \ti{f}(k) = \ti{h}\dd t + \ti{g\theta_l} \dd \beta_l(t) + \lambda \ti{f}(k).$$
Using Duhamel's formula, we have
$$
\begin{aligned}
\ti{f}(t,k,v) &= e^{-(\lambda-ia(v)\cdot k)t}\ti{f}(0,k,v) + \int_0^t e^{-(\lambda-ia(v)\cdot k)(t-s)}[\ti{h}+\lambda\ti{f}](s,k,v)\,\dd s \\
&+ \int_0^te^{-(\lambda-ia(v)\cdot k)(t-s)}\ti{g\theta_l}(s,k,v)\,\dd \beta_l(s).
\end{aligned}
$$ 
Integrating in the velocity variable $v\in V$, we get
$$
\begin{aligned}
\ti{\rho}(t,k) &= e^{-\lambda t} \int_V e^{ia(v)\cdot kt}\ti{f}(0,k,v)\dd v + \int_0^te^{-\lambda(t-s)}\int_V e^{ia(v)\cdot k(t-s)}[\ti{h}+\lambda\ti{f}](s,k,v)\,\dd v\,\dd s \\
&+ \int_0^te^{-\lambda(t-s)}\int_V e^{ia(v)\cdot k(t-s)}\ti{g\theta_l}(s,k,v)\,\dd v\, \dd \beta_l(s) \\
& =  T_d(t,k) + T_s(t,k),
\end{aligned}
$$
where 
$$T_d(t,k) := e^{-\lambda t} \int_V e^{ia(v)\cdot kt}\ti{f}(0,k,v)\dd v + \int_0^te^{-\lambda(t-s)}\int_V e^{ia(v)\cdot k(t-s)}[\ti{h}+\lambda\ti{f}](s,k,v)\,\dd v\,\dd s
$$ and 
$$ T_s(t,k) :=  \int_0^te^{-\lambda(t-s)}\int_V e^{ia(v)\cdot k(t-s)}\ti{g\theta_l}(s,k,v)\,\dd v\, \dd \beta_l(s) $$
denote respectively the deterministic and stochastic part of $\ti{\rho}(t,k)$. Let $k\in\Z^N$, $k\neq 0$. The deterministic term can be handled exactly as in the proof of \cite[Theorem 2.3]{bouchut} and we obtain, up to a real multiplicative constant,
$$\E\int_0^T|T_d|^2(t,k)\,\dd t \leq \frac{1}{\lambda^{1-\alpha} |k|^{\alpha}}\E\int_V |\ti{f}|^2(0,k,v)\, \dd v + \frac{1}{\lambda^{2-\alpha} |k|^{\alpha}}\E\int_0^T\!\!\!\int_V |\ti{h}+\lambda\ti{f}|^2(s,k,v)\, \dd v\,\dd s.$$
So let us now focus on the stochastic term $T_s$. First, using the It\^o isometry, we have
$$
\begin{aligned}
\E|T_s|^2(t,k) &= \E \int_0^t e^{-2\lambda(t-s)}\Big{|}\int_V e^{ia(v)\cdot k(t-s)}\ti{g\theta_l}(s,k,v)\,\dd v\Big{|}^2\, \dd s \\
&= \E\int_0^t e^{-2\lambda s}\Big{|}\int_V e^{ia(v)\cdot k s}\ti{g\theta_l}(t-s,k,v)\,\dd v\Big{|}^2\, \dd s,
\end{aligned}
$$
so that, by the Fubini Theorem and the change of variable $\tau := t-s$, we have
$$
\begin{aligned}
\E\int_0^T |T_s|^2(t,k)\,\dd t &= \E\int_0^T\!\!\!\int_0^{T-\tau} \!\!\!e^{-2\lambda s}\Big{|}\int_V e^{ia(v)\cdot k s}\ti{g\theta_l}(\tau,k,v)\,\dd v\Big{|}^2\, \dd s \,\dd \tau \\
& \leq \E\int_0^T\!\!\int_{\R} e^{-2\lambda s}\Big{|}\int_V e^{ia(v)\cdot k s}\ti{g\theta_l}(\tau,k,v)\,\dd v\Big{|}^2\, \dd s \,\dd \tau \\
& = \frac{1}{|k|}\E\int_0^T\!\!\int_{\R} e^{-\frac{2\lambda s}{|k|}}\Big{|}\int_V e^{ia(v)\cdot \frac{k}{|k|} s}\ti{g\theta_l}(\tau,k,v)\,\dd v\Big{|}^2\, \dd s \,\dd \tau.
\end{aligned}
$$
We use the bound
$$e^{-\frac{2\lambda s}{|k|}} \leq \frac{1}{1+\frac{4\lambda}{|k|^2}s^2}, \quad s\geq 0,$$
and estimate the oscillatory integral thanks to \cite[Lemma 2.4]{bouchut} and \eqref{nondegenlemmemoy}; we therefore get
$$
\E\int_0^T |T_s|^2(t,k)\,\dd t \leq  \frac{C}{\lambda^{1-\alpha} |k|^{\alpha}}\E\int_0^T \!\!\int_{V} |\ti{g\theta_l}|^2(\tau,k,v)\,\dd v\,\dd \tau.
$$
As a result, summing up the previous bounds, we have, up to a real multiplicative constant, 
\begin{multline*}
\E\int_0^T |\ti{\rho}|^2(t,k)\,\dd t \leq \frac{1}{\lambda^{1-\alpha} |k|^{\alpha}}\E\int_0^T \!\!\int_{V} |\ti{g\theta_l}|^2(\tau,k,v)\,\dd v\,\dd \tau + \frac{1}{\lambda^{1-\alpha} |k|^{\alpha}}\E\int_V |\ti{f}|^2(0,k,v)\, \dd v \\
+ \frac{1}{\lambda^{2-\alpha} |k|^{\alpha}}\E\int_0^T\!\!\!\int_V |\ti{h}+\lambda\ti{f}|^2(s,k,v)\, \dd v\,\dd s.
\end{multline*}
We choose $\lambda \equiv 1$, multiply the last equation by $|k|^{\alpha}$ and sum over $k\in\Z^N$ to find
$$
\begin{aligned}
\E\int_0^T\|\rho(t)\|^2_{H^{\alpha/2}_x}\dd t &\leq C\E\left[\|g\theta_l\|^2_{L^2(0,T;L^2_{x,v})} + \|h+f \|^2_{L^2(0,T;L^2_{x,v})}+\|f(0) \|^2_{L^2_{x,v}}\right] \\
&\leq  C\E\left[\|Qe_l\|^2_{L^{\infty}_x}\|g\|^2_{L^2(0,T;L^2_{x,v})} + \|h+f \|^2_{L^2(0,T;L^2_{x,v})}+\|f(0) \|^2_{L^2_{x,v}}\right].
\end{aligned}
$$
This concludes the proof when the noise is finite dimensional. For the infinite dimensional case, we recall that, thanks to \eqref{regularnoise01}, we have $\kappa_{0,\infty}=\sum_{l\geq 0}\|Qe_l\|^2_{L^{\infty}_x}<\infty$.
\end{proof}

\bibliographystyle{plain}
\bibliography{biblio}
\nocite{daprato}
\nocite{rosseland}
\end{document}